\definecolor{rose}{rgb}{0.96, 0.56, 0.76}
\pgfplotsset{width=7cm, compat=1.10}
\newtheoremstyle{pourdef}
  {10pt}
  {10pt}
  {}
  {}
  {\bf}
  {.~}
  { }
  {}
\newtheoremstyle{pourth}{10pt}{10pt}{\em}{}{\sc}{.~}{ }{}
\newtheoremstyle{pourpp}{10pt}{10pt}{\em}{}{\bf \em}{.~}{ }{}
\newtheoremstyle{pourrk}{10pt}{10pt}{}{}{\em}{.~}{ }{}
\newtheoremstyle{pourlm}{10pt}{10pt}{\em}{}{\bf \em}{.~}{ }{}
\newtheoremstyle{pourco}{10pt}{10pt}{\em}{}{\bf \em}{.~}{ }{}
\definecolor{black}{cmyk}{1,1,1,1}
\definecolor{colordef}{rgb}{0.2,0.5,0.07}   
\definecolor{colorprop}{rgb}{0.2,0.1,0.5}
\definecolor{color1}{rgb}{0.6,0.4,0.8}
\definecolor{color1}{rgb}{0.9,0.6,0.4}
\definecolor{color1}{rgb}{0.36, 0.54, 0.66}
\definecolor{color2}{rgb}{0.2,0.1,0.5}
\definecolor{color2}{rgb}{0.91, 0.84, 0.42}
\definecolor{color2}{rgb}{0.87, 0.36, 0.51}
\definecolor{color2}{rgb}{0.4, 0.69, 0.2}
\definecolor{color2}{rgb}{0.84, 0.23, 0.24}
\definecolor{color3}{rgb}{1.0, 0.13, 0.32}
\definecolor{color3}{rgb}{0.54, 0.17, 0.89}
\definecolor{babypink}{rgb}{0.96, 0.76, 0.76}
\newtheorem{lem}{Lemma}[section]
\newtheorem{cor}[lem]{Corollary}
\newtheorem{thm}[lem]{Theorem}
\newtheorem{prop}[lem]{Proposition}
\declaretheorem[name=Theorem,numberwithin=section]{theorem}
\theoremstyle{definition}
\newtheorem{definition}[lem]{Definition}
\newtheorem{ex}[lem]{Example}
\newtheorem{fact}[lem]{Fact}
\newtheorem{rmk}[lem]{Remark}
\newtheorem{question}[lem]{Question}
\newtheorem*{question*}{Question}
\numberwithin{equation}{section}
\newcommand{\N}{\mathbf N}
\newcommand{\Z}{\mathbf Z}
\newcommand{\R}{\mathbf R}
\newcommand{\Ein}{\mathbf{Ein}}
\newcommand{\Diams}{\mathbf{D}}
\newcommand{\dS}{\mathbf{dS}}
\newcommand{\D}{\mathbf D}
\newcommand{\C}{\mathbf C}
\renewcommand{\S}{\mathbf S}
\renewcommand{\H}{\mathbf H}
\renewcommand{\P}{\mathbf P}
\newcommand{\hol}{\mathsf{hol}}
\newcommand{\dev}{\mathsf{dev}}
\newcommand{\diag}{\operatorname{diag}}
\newcommand{\Conf}{\mathrm{Conf}}
\newcommand{\Iso}{\mathrm{Isom}}  
\newcommand{\Proj}{\operatorname{Proj}}
\newcommand{\Mark}{\mathsf{Mark}}
\newcommand{\Extr}{\operatorname{Extr}}
\renewcommand{\b}{\mathbf{b}}
\newcommand{\limhaus}{\operatorname{lim}_k}
\newcommand{\Grassd}{\operatorname{Gr}_p (\mathbf{R}^{2p})}
\def\O{\Omega}
\newcommand{\PO}{\operatorname{PO}}
\newcommand{\PGL}{\operatorname{PGL}}
\newcommand{\PSL}{\operatorname{PSL}}
\newcommand{\Gr}{\operatorname{Gr}_2(\mathbf{R}^4)}
\newcommand{\SLq}{\operatorname{PGL}(4, \mathbf{R})}
\newcommand{\PRSP}{\operatorname{\Lambda}^2 \mathbf{R}^4}
\newcommand{\PPRSP}{\mathbf{P}(\operatorname{\Lambda}^2 \mathbf{R}^4)}
\newcommand{\pp}{0}
\newcommand{\mm}{1}
\title[Proper almost-homogeneous domains in the Einstein Universe]{Proper almost-homogeneous domains of the Einstein universe}
\author{Adam Chalumeau and Blandine Galiay}
\begin{document}

\begin{abstract} The Einstein universe $\mathbf{Ein}^{p,q}$ of signature $(p,q)$ is a pseudo-Riemannian analogue of the conformal sphere; it is the conformal compactification of the pseudo-Riemannian Minkowski space. For $p,q \geq 1$, we show that, up to a conformal transformation, there is only one almost-homogeneous domain in $\mathbf{Ein}^{p,q}$ that is bounded in a suitable stereographic projection. This domain, which we call a diamond, is a model for the symmetric space of $\operatorname{PO}(p,1) \times \operatorname{PO}(1,q)$. We deduce a classification of closed conformally flat manifolds with proper development.

\end{abstract}

\maketitle

\section{Introduction}

In this paper we investigate domains (i.e.\ connected open subsets) in the Einstein universe, the pseudo-Riemannian analogue of the conformal sphere $\S^n$. We study a particular class of domains called \emph{proper}, that is domains that are bounded within a suitable stereographic projection. We answer the question of whether a proper domain $\O$ can be \emph{almost-homogeneous}, meaning that every point of the boundary $\partial \O$  can be approached by the orbit of a point $x \in \O$ under the action of the conformal group~$\Conf(\O)$.

\subsection{The Riemannian case. } \label{sect_riem_case} Any spherical ball of the conformal sphere $\S^n$ is a conformal model of the real hyperbolic space $\H^n$. This provides an example of a proper symmetric domain of the conformal sphere. When $n=2$, slight deformations of a cocompact lattice $\Gamma <\PSL(2,\R)$ into $\PSL(2,\C)$ remain discrete and faithful, acting cocompactly on a proper domain bounded by a topological circle. These examples are called \emph{quasi-Fuchsian}. Similar techniques create examples of lattices $\Gamma$ of $\PO(1,n+1)$ acting properly discontinuously and cocompactly on a proper domain of $\S^n$, for $n\geq 3$.\linebreak In fact, \cite{Apanasov} presents examples of deformations of lattices $\Gamma$ of $\PO(1,n)=\Iso(\H^n)$ into $\PO(1,n+1)=\Conf(\S^n)$, corresponding to deforming the conformally flat structure of a finite volume complete hyperbolic $n$-manifold. Given an almost-homogeneous domain $\Omega\subset\S^n$, either it is a spherical ball or its automorphism group is Zariski dense in $\PO(1,n+1)$. In particular, the examples of \cite{Apanasov} feature a discrete conformal group.

\subsection{Higher signature case} Let $p,q\geq 1$. The \emph{Einstein universe} $\Ein^{p,q}$ is a compact pseudo-Riemannian manifold of signature $(p,q)$, where $p$ and $q$ are the number of negative and positive signs respectively. It can be regarded as the boundary of the $(p,q+1)$ pseudo-Riemannian hyperbolic space $\H^{p,q+1}$. A notion of stereographic projection  exists from a dense open subset of $\Ein^{p,q}$ onto the \emph{Minkowski space}~$\R^{p,q}$, the pseudo-Riemannian affine space of signature $(p,q)$. In Lorentzian signature ( i.e. for $p=1$), one can construct a proper symmetric domain using the causal structure. Given two chronologically related points $a,b\in\R^{1,n-1}$, the \emph{diamond} defined by $a$ and $b$ is the intersection of the chronological future of $b$ with the chronological past of $a$. This defines a bounded domain conformally equivalent to $\R\times\H^{n-1}$ endowed with the product metric $-dt^2\oplus g_{\H^{n-1}}$. The conformal group of a diamond is isomorphic \linebreak to $\Iso(\R)\times\Iso(\H^{n-1})$. For general signature $(p,q)$, the space $\H^p\times \H^q$ endowed with the metric $-g_{\H^{p}}\oplus g_{\H^{q}}$ embeds as a proper homogeneous domain of the Einstein universe, see Section~\ref{sect_higher_diamonds}. One way to construct this embedding is by examining the open orbits of the action of $\PO(p,1)\times\PO(1,q)\subset\PO(p+1,q+1)$ on $\Ein^{p,q}$. All these domains are conjugate in $\PO(p+1,q+1)$ and referred to as \emph{diamonds}. A purely causal definition exists for them, see \cite{Romeo}. In contrast with the Riemannian case, for $p,q \geq 1$, we prove the following.

\begin{thm}
\label{mainth}
    Any proper almost-homogeneous domain of $\Ein^{p,q}$ is a diamond.
\end{thm} 

Weakening the assumptions on the domain provides more flexibility. There are indeed many examples of nonproper domains of $\Ein^{p,q}$ that admit a cocompact and properly discontinuous action of a discrete subgroup of $\PO(p+1, q+1)$; for example the \emph{affine charts}, i.e.\ subsets of the form $\Ein^{p,q} \smallsetminus C(x)$ with $x \in \Ein^{p,q}$ (see Section~\ref{geom_Ein_universe}). Other examples are given by cocompact domains of discontinuity of Gromov hyperbolic groups acting via $P_{r+1}$-Anosov representations with values in $\PO(p+1, q+1)$, where $r=\min(p,q)$ and $P_{r+1}$ is the stabilizer in  $\PO(p+1, q+1)$ of a maximal totally isotropic subset of $\R^{p+1,q+1}$, see e.g.\ \cite{Frances_LKG, guichard2012anosov}. These examples are never proper.

\subsection{An application to conformally flat structures. } When $p$ and $q$ are different, cocompact lattices of $\PO(p,1)\times\PO(1,q)$ are all reducible and virtually products of cocompact lattices. This fact and Theorem~\ref{mainth} enable us to fully classify closed conformally flat manifolds with \emph{proper development}, that is closed manifolds $M$ such that the image of the developing map $\dev:\widetilde{M}\to\Ein^{p,q}$ is proper.

\begin{thm}
    \label{maincor}
    Let $1 \leq p < q$ with $(p,q)\neq (2,3)$ and $M$ be a closed connected conformally flat pseudo-Riemannian manifold with proper development. Then, up to a finite cover, the manifold~$M$ is conformally equivalent to $$(-\Sigma^p)\times\Sigma^q,$$
    where $\Sigma^p$ and $\Sigma^q$ are closed Riemannian hyperbolic manifolds. In Lorentzian signature, i.e. for $p=1$, the manifold $M$ is (up to finite cover) conformally equivalent to the product $(-\S^1)\times \Sigma$, where $\Sigma$ is a closed Riemannian hyperbolic manifold.
\end{thm}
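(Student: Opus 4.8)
\textbf{Proof proposal for Theorem~\ref{maincor}.}

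The plan is to bootstrap from Theorem~\ref{mainth} together with the rigidity of cocompact lattices in $\PO(p,1)\times\PO(1,q)$ when $p\neq q$. Let $M$ be a closed connected conformally flat pseudo-Riemannian manifold of signature $(p,q)$ with proper development $\dev\colon\widetilde M\to\Ein^{p,q}$, and let $\Gamma=\pi_1(M)$ act on $\widetilde M$ by deck transformations, with holonomy $\rho\colon\Gamma\to\PO(p+1,q+1)$. First I would observe that $\Omega:=\dev(\widetilde M)$ is a proper domain, that $\dev$ is $\rho$-equivariant, and that $\rho(\Gamma)$ acts on $\Omega$; since $M$ is closed, a standard argument (properness of the development forces $\dev$ to be a covering onto $\Omega$, which in the conformally flat setting one gets from the fact that a proper domain carries a complete conformally flat metric in a suitable chart, or directly from \cite{Frances_LKG}-type completeness results) shows that $\dev$ is a covering map and $\rho(\Gamma)$ acts properly discontinuously and cocompactly on $\Omega$. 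In particular $\Omega$ is a proper quasi-homogeneous domain, so Theorem~\ref{mainth} applies: $\Omega$ is a diamond, hence conformally equivalent to $\H^p\times\H^q$ with the metric $-g_{\H^p}\oplus g_{\H^q}$, and $\Conf(\Omega)\cong \PO(p,1)\times\PO(1,q)$ (up to finite index / components, which I would track carefully).

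Next I would transport the problem to the lattice side. The group $\rho(\Gamma)$ is now a discrete subgroup of $\Conf(\Omega)\cong\PO(p,1)\times\PO(1,q)$ acting cocompactly on $\H^p\times\H^q$, hence $\rho(\Gamma)$ is a cocompact lattice in $G:=\PO(p,1)\times\PO(1,q)$ (and $\rho$ has finite kernel, which after passing to a finite cover of $M$ I may assume trivial, so $\Gamma$ itself is such a lattice). Here is where the hypothesis $p<q$ (in particular $p\neq q$, so there is no ambiguity swapping the two factors, and the two simple factors are non-isomorphic) enters: I would invoke the stated fact that every cocompact lattice of $\PO(p,1)\times\PO(1,q)$ with $p\neq q$ is reducible, hence by the Margulis-type / Gromov normal subgroup arguments for products (or simply because the projections to each factor must be discrete when the factors are non-isogenous and of rank one — no irreducible lattice can project densely since a dense projection would contradict discreteness in a product of rank-one groups, an argument using that irreducible lattices in higher rank are arithmetic while rank-one factors force the projections to be either discrete or dense) it is virtually a product $\Gamma_p\times\Gamma_q$ of cocompact lattices $\Gamma_p<\PO(p,1)$ and $\Gamma_q<\PO(1,q)$. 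Passing to a further finite cover I arrange $\Gamma=\Gamma_p\times\Gamma_q$ with each $\Gamma_i$ torsion-free.

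Then the quotient is $M = (\Gamma_p\times\Gamma_q)\backslash(\H^p\times\H^q)=(\Gamma_p\backslash\H^p)\times(\Gamma_q\backslash\H^q)$ as a pseudo-Riemannian manifold, and with the metric $-g_{\H^p}\oplus g_{\H^q}$ this is exactly $(-\Sigma^p)\times\Sigma^q$ with $\Sigma^p:=\Gamma_p\backslash\H^p$, $\Sigma^q:=\Gamma_q\backslash\H^q$ closed Riemannian hyperbolic manifolds. For the Lorentzian case $p=1$: $\PO(1,1)$ has a discrete cocompact subgroup only of the form generated by a hyperbolic translation together with possibly a reflection, so $\Gamma_1\backslash\H^1$ is, up to finite cover, the circle $\S^1$, giving $M\simeq(-\S^1)\times\Sigma$ with $\Sigma=\Gamma_q\backslash\H^q$; I would note this is the degenerate instance of the general statement where ``hyperbolic $1$-manifold'' means a circle. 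Finally I would double-check that no additional conformal automorphisms of $M$ obstruct the identification — they do not, because the developing map intertwines everything and we have already pinned down $\Omega$ and its conformal group.

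\textbf{Main obstacle.} The crux is the first paragraph: upgrading ``$M$ closed with proper development'' to ``$\dev$ is a covering onto a proper quasi-homogeneous domain'', i.e.\ establishing completeness/covering-ness of the developing map so that Theorem~\ref{mainth} is actually applicable, and simultaneously controlling the component groups when passing between $\Conf(\Omega)$ and $\PO(p,1)\times\PO(1,q)$ so that the ``virtually a product of lattices'' step is legitimate; the lattice-reducibility input, while essential, is quoted, so the real work is the geometric bridge from a closed conformally flat manifold to the homogeneous-domain picture.
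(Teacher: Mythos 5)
There is a genuine gap, and it sits exactly where you flagged it: your first paragraph takes as input that properness of $\dev(\widetilde M)$ already forces $\dev$ to be a covering map onto $\O:=\dev(\widetilde M)$, and you use that covering property to get the properly discontinuous cocompact action and hence the quasi-homogeneity needed to invoke Theorem~\ref{mainth}. Neither of your suggested justifications works: a proper domain of $\Ein^{p,q}$ does \emph{not} in general carry a complete conformally flat (or complete $\Conf(\O)$-invariant) metric --- a bounded domain of $\R^{p,q}$ with the flat metric is incomplete, and the Markowitz distance of a proper domain is only shown to be complete \emph{after} assuming quasi-homogeneity --- and there is no off-the-shelf completeness result of this kind; indeed the paper itself remarks that the developing map of a closed conformally flat manifold need not be a covering onto its image. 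So in your scheme the covering claim is an unproved input, while in fact it is part of the conclusion: it can only be extracted \emph{after} the classification, because it is the diamond (a symmetric space $\H^p\times\H^q$) that carries the complete invariant Riemannian metric making the argument run.

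The repair, which is how the paper proceeds, is to reverse the logical order. Quasi-homogeneity of $\O$ needs no covering statement at all: if $\mathcal K\subset\widetilde M$ is a compact fundamental domain for $\pi_1(M)$, then by equivariance of $\dev$ the compact set $\dev(\mathcal K)$ meets every $\hol(\pi_1(M))$-orbit in $\O$, so $\O=\Conf(\O)\cdot\dev(\mathcal K)$ (the definition of quasi-homogeneity does not ask for discreteness or proper discontinuity). Theorem~\ref{mainth} then identifies $\O$ with a diamond; only now does one pull back the complete invariant Riemannian metric $g_{\H^p}\oplus g_{\H^q}$ by $\dev$, note that the resulting $\pi_1(M)$-invariant metric descends to the closed manifold $M$ and is therefore complete, and conclude that $\dev$ is a local isometry from a complete Riemannian manifold, hence a covering, hence a diffeomorphism since the diamond is simply connected. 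After that, your second and third paragraphs coincide with the paper's endgame (reducibility of cocompact lattices in $\PO(p,1)\times\PO(1,q)$ for $p\neq q$, finite covers, and the $\PO(1,1)$/circle discussion in the Lorentzian case); since you quote the reducibility fact this is fine, though your parenthetical heuristic for it is off --- irreducible lattices in products \emph{do} project densely to the factors without contradicting discreteness (e.g.\ Hilbert modular groups in $\PSL(2,\R)\times\PSL(2,\R)$), and the actual obstruction for $p\neq q$ comes from Margulis arithmeticity as in the cited reference.
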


In particular, every closed manifold with proper development is \emph{Kleinian}, i.e. the developing map is a diffeomorphism onto its image. In general, the developing map of a closed conformally flat manifold may not even be a covering map onto its image. The Ehresmann--Thurston deformation principle provides Lorentzian conformally flat structures on $M=\S^1\times\Sigma_g$, where $\Sigma_g$ is a closed Riemannian surface of genus $g\geq 2$, such that the holonomy of $M$ is non-discrete in $\PO(2,3)$ (see \cite[Sect. 10.3.4]{Frances_LKG}). \\

\subsection{Divisible convex sets in flag manifolds.} The theory of divisible convex subsets of the projective space $\mathbf{P}(\mathbf{R}^{n+1})$ generalizes the theory of closed real hyperbolic manifolds and has been widely studied since Benzecri's thesis \cite{benzecri1960varietes}. The objects of study are domains that are bounded in some affine chart and that admit a cocompact action of a discrete subgroup of $\PGL(n+1, \mathbf{R})$.

A number of authors have investigated their general properties (see e.g.\ \cite{vinberg1965structure, koszul1968deformations, vey1970automorphismes, benoist2001convexes, benoist2003convexes}) and the construction of examples (see e.g.\ \cite{vinberg1967quasi, johnson1987deformation, ballas2018convex, blayac2023divisible}). The case where the set is strictly convex is well understood, while the case of non-strictly convex sets remains a subject of questions and recent research \cite{islam2019rank, choi2020convex, zimmer2020higher, blayac2021boundary}; see \cite{benoist2008survey} for a survey on this theory and references. 

There are many examples of divisible convex sets that are nonsymmetric, i.e.\ that are not Riemannian symmetric spaces embedded in the projective space, so that the theory does not reduce to the one of Riemannian symmetric spaces.

The projective space is an example of a \emph{flag manifold}, i.e.\ a compact manifold $G/P$, where $G$ is a noncompact semisimple Lie group and $P$ a parabolic subgroup of $G$. Some notions of projective geometry can be adapted to study divisible convex sets in general flag manifolds (see \cite{Zimpropqh}); in particular, there is a notion of \emph{properness} in flag manifolds. The question of whether the theory of divisible convex sets generalizes to other flag manifolds was asked by W. van Limbeek and A. Zimmer.  One issue is to determine whether there are examples of nonsymmetric divisible domains.

\begin{question} \label{question_Lim_Zim}\cite{van2019rigidity}
    Given some flag manifold $G/P$, are all divisible convex set in $G/P$ symmetric?
\end{question}

Although we saw in Section~\ref{sect_riem_case} that there are examples of rank-one Lie groups where Question \ref{question_Lim_Zim} has a negative answer, in higher rank it received a positive partial response in several cases: Limbeek--Zimmer proved rigidity of proper divisible domains in the flag manifolds $\Grassd$ that are convex in some affine chart \cite{van2019rigidity}, and Zimmer proved in \cite{zimmer2013rigidity} that every divisible domain of $\mathbf{P}(\mathbf{C}^n)$ with $C^1$ boundary is projectively isomorphic to the complex unit ball. The case of an arbitrary flag manifold~$G/P$ where $P$ is a nonmaximal proper parabolic subgroup of $G$ also received a positive answer \cite{Zimpropqh}. 

Theorem~\ref{mainth} gives a positive answer to Question \ref{question_Lim_Zim} for the flag manifolds $\Ein^{p,q} = \PO(p+1, q+1)/ P_1$, where $P_1 \leq \PO(p+1, q+1)$ is the stabilizer of an isotropic line of $\mathbf{R}^{p+1, q+1}$. Some exceptional Lie groups isomorphisms occurring in low dimension give:

\begin{cor}\label{cor_rigidity}
$(1)$ Let $\Omega\subset\Gr$ be a proper domain and assume that there exists a subgroup of $\PGL(4, \R)$ preserving and acting almost-homogeneously on $\O$. Then the group of all the elements of $G = \PGL(4, \R)$ preserving $\O$ is conjugate to $\PO(2,2)$.

$(2)$ Let $\mathcal{F}$ be one of the two connected components of the space of maximal totally isotropic subspaces of $\R^{4,4}$. Let $\Omega\subset\mathcal{F}$ be a proper domain and assume that there exists a subgroup of $G = \PO(4,4)$ preserving $\O$ and acting almost-homogeneously on it. Then the group of all the elements of $\PO(4, 4)$ preserving $\O$ is conjugate to $\PO(3,1) \times \PO(1,3)$.
\end{cor}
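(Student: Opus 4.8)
The plan is to derive both parts from Theorem~\ref{mainth} using the low-dimensional exceptional isomorphisms $A_3 = D_3$ (Plücker) for part $(1)$, and the triality of $D_4$ for part $(2)$. In each case one identifies the flag manifold in question, $\Gr$ or $\mathcal{F}$, with an Einstein universe of balanced signature via an equivariant diffeomorphism $\Phi$; one checks that $\Phi$ carries affine charts to stereographic charts, so that properness is preserved; one applies Theorem~\ref{mainth} to conclude that the transported domain is a diamond; and finally one reads off the stabilizer by transporting the model description of a diamond back through $\Phi$.

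For $(1)$, the Plücker embedding identifies $\Gr$ with the projectivized isotropic cone of the quadratic form $\omega \mapsto \omega \wedge \omega$ on $\PRSP \cong \mathbf{R}^6$, which has signature $(3,3)$, so $\Gr \cong \Ein^{2,2}$; since $\GL(4,\mathbf{R})$ scales this form by $\det$, one obtains an injective homomorphism $\PGL(4,\mathbf{R}) \to \Conf(\Ein^{2,2})$. The complement of the affine chart $\{V : V \cap V_0 = \{0\}\}$ of $\Gr$ is $\{V : V \cap V_0 \neq \{0\}\}$, which under Plücker is the lightcone $C([V_0])$ of $[V_0] \in \Ein^{2,2}$; hence affine charts of $\Gr$ are stereographic charts of $\Ein^{2,2}$ and $\Omega$ is a proper domain of $\Ein^{2,2}$. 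As $H$ now sits in $\Conf(\Ein^{2,2})$, preserves $\Omega$ and acts cocompactly, $\Conf(\Omega)$ acts cocompactly, so Theorem~\ref{mainth} gives that $\Omega$ is a diamond; its stabilizer in $\Conf(\Ein^{2,2})$ is conjugate to $\PO(2,1) \times \PO(1,2)$, possibly enlarged by the order-two symmetry swapping the two factors of $(-\H^2) \times \H^2$ (a conformal symmetry precisely because the signature is balanced). It remains to intersect with the image of $\PGL(4,\mathbf{R})$ and match the result with a conjugate of $\PO(2,2) \subset \PGL(4,\mathbf{R})$: writing $\mathbf{R}^4 = \mathbf{R}^2 \otimes \mathbf{R}^2$, the subgroup $\SL(2,\mathbf{R}) \times \SL(2,\mathbf{R})$ preserves the determinant form of signature $(2,2)$, and the splitting $\Lambda^2(\mathbf{R}^2 \otimes \mathbf{R}^2) = \operatorname{Sym}^2 \mathbf{R}^2 \oplus \operatorname{Sym}^2 \mathbf{R}^2$ with summands of opposite sign exhibits $\PO(2,2)$ as mapping onto the diamond's stabilizer under Plücker.

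Part $(2)$ follows the same template with triality replacing Plücker. Fix a component $\mathcal{F}$ of the space of maximal totally isotropic subspaces of $\mathbf{R}^{4,4}$; by the pure-spinor correspondence (which for $D_4$ matches pure spinor lines with the \emph{whole} projectivized isotropic cone) $\mathcal{F}$ is identified with the projectivized isotropic cone of one half-spin representation $S^\pm \cong \mathbf{R}^{4,4}$, and triality provides an automorphism $\tau$ of $\PO(4,4)$ together with a $\tau$-equivariant diffeomorphism $\Phi \colon \mathcal{F} \xrightarrow{\sim} \Ein^{3,3} = \PO(4,4)/P_1$. Checking as before that $\Phi$ sends affine charts of $\mathcal{F}$ (complements of the set of maximal isotropics meeting a fixed one nontrivially) to stereographic charts of $\Ein^{3,3}$, the domain $\Phi(\Omega)$ is proper, and $\tau(H)$ preserves it cocompactly, so $\Phi(\Omega)$ is proper quasi-homogeneous; by Theorem~\ref{mainth} it is a diamond, with stabilizer in $\PO(4,4)$ conjugate to $\PO(3,1) \times \PO(1,3)$. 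Applying $\tau^{-1}$ and using that triality preserves the conjugacy class of $\PO(3,1) \times \PO(1,3)$ in $\PO(4,4)$ — it is the stabilizer of a splitting $\mathbf{R}^{4,4} = U \oplus U^\perp$ with $U$ of signature $(3,1)$, whose complexification is the unique $\SO(4,\mathbf{C}) \times \SO(4,\mathbf{C})$ inside $\SO(8,\mathbf{C})$ up to conjugacy — we conclude that the stabilizer of $\Omega$ in $\PO(4,4)$ is conjugate to $\PO(3,1) \times \PO(1,3)$.

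The conceptual reduction is short; the real work is the bookkeeping of connected components and of the precise real forms. One must pin down the image of $\PGL(4,\mathbf{R})$, resp.\ the $\tau$-image of $\PO(4,4)$, inside $\Conf(\Ein^{2,2})$, resp.\ inside $\Conf(\Ein^{3,3})$, and verify that it meets the diamond's stabilizer in exactly $\PO(2,2)$, resp.\ $\PO(3,1) \times \PO(1,3)$, and not a proper subgroup; this in turn requires knowing the full conformal group of a diamond in balanced signature, including whether the factor-swap is realized, and identifying which real form of $\SO(4,\mathbf{C}) \times \SO(4,\mathbf{C})$ the triality automorphism produces. That last point — the behavior of triality on real forms — is the genuinely delicate step; once it and the affine/stereographic chart correspondence are in hand, the rest is routine.
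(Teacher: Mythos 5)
Your reduction (Plücker for $(1)$, triality for $(2)$, the chart/properness correspondence, then Theorem~\ref{mainth}) coincides with the first half of the paper's argument, but the end-game --- identifying the full stabilizer, which is the actual content of Corollary~\ref{cor_rigidity} --- has a genuine gap, and in part $(1)$ the route you sketch would, as written, produce the wrong group. First, there is no homomorphism $\PGL(4,\R)\to\Conf(\Ein^{2,2})$: an element of negative determinant scales $\omega$ by a negative number, so its image preserves the quadric but reverses the conformal class; it is \emph{anti}-conformal and does not lie in $\PO(3,3)=\Conf(\Ein^{2,2})$ at all (only $\PSL(4,\R)$ maps into $\PO(3,3)$, onto its identity component). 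Second, the factor swap of $(-\H^2)\times\H^2$ is \emph{not} conformal, balanced signature notwithstanding: it pulls back $-g_{\H^2}\oplus g_{\H^2}$ to its negative, and in the projective model no element of $\PO(3,3)$ can exchange the summands of signature $(2,1)$ and $(1,2)$, which are canonically distinguished (spacelike- versus timelike-extremal points); so $\Conf(\Diams^{2,2})=\PO(2,1)\times\PO(1,2)$, with no swap. The swap \emph{is} realized in the stabilizer of $\O$ inside $\PGL(4,\R)$, but precisely by the determinant $-1$ elements of $\operatorname{O}(2,2)$, i.e.\ by anti-conformal transformations. Consequently your stated plan --- ``intersect the diamond's stabilizer in $\Conf(\Ein^{2,2})$ with the image of $\PGL(4,\R)$'' --- only yields the image of $\operatorname{SO}(2,2)$, an index-two subgroup of $\PO(2,2)$; recovering the full answer requires separately accounting for the anti-conformal elements, bookkeeping you defer (and partly misstate). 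For $(2)$, your justification that triality preserves the $\PO(4,4)$-conjugacy class of $\PO(3,1)\times\PO(1,3)$ is the uniqueness of the complexification $\operatorname{SO}(4,\C)\times\operatorname{SO}(4,\C)$, which does not determine the real form: $P(\operatorname{O}(4)\times\operatorname{O}(4))$, $P(\operatorname{O}(2,2)\times\operatorname{O}(2,2))$ and $P(\operatorname{O}(3,1)\times\operatorname{O}(1,3))$ all have that complexification. You yourself flag this as ``the genuinely delicate step'' and leave it open, so part $(2)$ is not established by the proposal either.

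For $(1)$ the paper closes the argument by a different and computation-free route, which you may want to adopt: since $\tau(\SLq^0)=\PO(3,3)^0$ and every proper quasi-homogeneous domain of $\Gr$ is carried by Plücker to a diamond, all such domains are $\SLq$-isomorphic to one another; the model $\mathbf{B}_{2,2}(\b)$ of Example~\ref{ex_Boule} is one of them (proper and divisible), and its full automorphism group in $\PGL(4,\R)$ is already known to be $\PO(\b)\simeq\PO(2,2)$ by Limbeek--Zimmer. Hence every stabilizer is a conjugate of $\PO(2,2)$, with no analysis of components, real forms, or anti-conformal elements. If you prefer your direct approach, you must (i) determine the full subgroup of $\PGL(\Lambda^2\R^4)$ preserving the diamond, including the anti-isometries of $\omega$, and identify its preimage in $\PGL(4,\R)$ with $\PO(2,2)$, and (ii) for $(2)$, actually prove that the triality image of $P(\operatorname{O}(3,1)\times\operatorname{O}(1,3))$ is $\PO(4,4)$-conjugate to it, by analysing triality on the symmetric subgroup itself rather than on its complexification.
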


The first part of Corollary \ref{cor_rigidity} improves the main result of \cite{van2019rigidity} for $p=2$, both by replacing the divisibility assumption by a weaker almost-homogeneity assumption, and by removing the assumption that $\O$ is \emph{convex in an affine chart}. J. Danciger and W. van Limbeek informed us that they have obtained a similar result to Corollary~\ref{cor_rigidity}.(1) using different methods.

Theorem~\ref{mainth} and Corollary~\ref{cor_rigidity} imply in particular that every almost-homogeneous domain of the considered flag manifolds $G/P$ is divisible. This fact does not hold for every flag manifold. For instance, there exist properly convex domains of the projective space that are almost-homogeneous and not divisible, and not even quasi-homogeneous
(i.e.\ with cocompact automorphism group), see \cite{ballas2018convex}. 

\begin{rmk}
    The starting point of this project was the case of $\Ein^{1,n-1} = \PO(2,n)/P_1$ (see Section~\ref{sect_proof_loren}). It was then generalized simultaneously to $\Ein^{p,q} = \PO(p+1, q+1)/ P_1$ in the present paper, and to Shilov boundaries of Hermitian symmetric spaces of tube type in \cite{rigidityShilov}, providing in both cases a positive answer to Question~\ref{question_Lim_Zim}. 
\end{rmk}

\subsection{Outline of the proof} 

\subsubsection{Invariant distance function.} A key ingredient for the study of proper domains $\O$ in $\Ein^{p,q}$ is a natural conformally invariant distance function $\delta^\O$, a pseudo-Riemannian analogue of the Hilbert metric. Such a distance function was defined by Markowitz in \cite{markowitz_1981} for pseudo-Riemannian manifolds, as a generalization of Kobayashi's projectively invariant pseudodistance \cite{kob}. Its definition relies on the notion of projective parameters of lightlike geodesics. We define this distance function for domains of the Einstein universe and study its basic properties in Section~\ref{kobayashimetric}. In Section~\ref{properness}, we show that $\delta^\O$ is proper and geodesic whenever $\Omega$ is proper and almost-homogeneous.

\subsubsection{Dual convexity.} A first step in the proof of Theorem \ref{mainth} is to investigate the convex structure of an almost-homogeneous domain $\O$. In real projective space, any proper almost-homogeneous domain is convex \cite[Prop.\ 3.25]{kob}. In \cite{Zimpropqh}, Zimmer defines \emph{dual convexity} as a generalization of convexity in the real projective setting, using the characterization of convexity of bounded domains by the existence of supporting hyperplanes at any point of the boundary. He proves that every proper quasi-homogeneous domain of a flag manifold $G/P$ is dually convex (see \cite[Cor.\ 9.3]{Zimpropqh}), and we will use a slightly stronger version of that statement for almost-homogeneous domains. 

For the case of Lorentzian geometry,  we relate the notion of dual convexity with the standard notion of causal convexity, see Section~\ref{convexity}. This enables us to give a short proof of Theorem \ref{mainth} in the Lorentzian signature in Section~\ref{endoftheproof}. In fact,  we show that proper dually convex domains are precisely the proper causally convex maximal domains of the Minkowski space in the sense of \cite{smaï2023enveloping}.

\subsubsection{Photon-extremal points}\label{sect_outline_extr}  To prove Theorem~\ref{mainth}, we define a notion of \emph{photon-extremality} for points of $\partial \O$, similar to that of extremality for domains of real projective space. Following the strategy of Limbeek--Zimmer \cite{van2019rigidity}, we investigate the dynamics of $\Conf(\O)$ near extremal points. An analysis of the dynamics of divergent sequences of $\PO(p+1,q+1)$, using the Markowitz distance function as well as dual convexity, shows that extremal points satisfy a strong geometric property, see Proposition~\ref{extremal_implique_conedisjoint}. 
This will split extremal points into two $\Conf(\O)$-invariant subsets: spacelike-extremal and timelike-extremal points (Section$~$\ref{spacelike_timelike_etremal}). Using the projective model of $\Ein^{p,q}$, this will impose that the conformal group $\Conf(\O)$ preserves a splitting $\R^{p+1,q+1}=V_\pp\oplus V_\mm$. The study of the signature of $V_\pp$ and $V_\mm$ will show that $\Conf(\O)=\PO(p,1)\times\PO(1,q)$ and that $\Omega$ is a diamond, see Section \ref{sect_proof_any_sign}.

\subsection{Organization of the paper} In Section \ref{sect_prelimi}, we recall well-known facts of conformal geometry and we introduce the Einstein universe. In Section \ref{sect_proper_domains}, we define the notions of properness, almost-homogeneity and dual convexity, and introduce the only proper symmetric domains of $\Ein^{p,q}$, namely, diamonds. In Section \ref{kobayashimetric}, we investigate the properties of the Markowitz pseudodistance on a domain $\O$ of $\Ein^{p,q}$, which turns out to be a distance function as soon as $\O$ is proper. In Section \ref{sect_dyn_boun}, we define \emph{photon-extremal points} of the boundary of a proper domain, and use the Markowitz distance function to prove that they satisfy a strong geometric property in the almost-homogeneous case. This geometric property allows us to finish the proof of Theorem \ref{mainth} in Lorentzian signature in Section \ref{sect_proof_loren} and in any signature in Section \ref{sect_proof_any_sign}. In Section \ref{sect_proof_mainth2} we prove Theorem \ref{maincor}. Finally, in Section \ref{sect_except_iso} we prove Corollary \ref{cor_rigidity}.

\subsection*{Acknowledgements} The authors would like to thank Charles Frances and Fanny Kassel for their support and valuable guidance in the research leading to the results presented in this paper. They would also like to thank Roméo Troubat for sharing his construction of diamonds and his inspiring insights in higher signature, and Yves Benoist for initially suggesting that they explore rigidity in the Einstein Universe of signature $(1, 2)$, which was the starting point for this work. The first-named author is also grateful to Pierre Py for the valuable discussions they shared on discrete subgroups of Lie groups. The second-named author also expresses gratitude to Jeffrey Danciger and Wouter van Limbeek for valuable insights on Question \ref{question_Lim_Zim}. Finally, the authors would like to thank the organizers of the conference \emph{Diverse Aspects of Groups, Geometry and Dynamics}, held in Heidelberg in September 2023, where this collaboration started.

\section{Preliminaries}\label{sect_prelimi} 
In this section, we recall some notions of conformal geometry and fix some notations.

\subsection{General notions of conformal geometry. } \label{sect_conf_geom}
A \emph{conformal manifold} is a manifold $M$ equipped with a \emph{conformal class} $[g]$, where $g$ is a pseudo-Riemannian metric on $M$, and
$$ [g] = \left\{ e^f \cdot g \, \vert \, f \in C^\infty(M) \right\}. $$
The signature of $g$ will be denoted by $(p, q)$, where $p$ refers to the number of ``$-$'' and $q$ to the number of ``$+$''. If $g$ has signature $(p,q)$, we say that $(M,[g])$ is a conformal manifold of signature $(p,q)$. At every point $x \in M$, the bilinear form $g_x$ can be written as
\[ g_x(v, v) = -(v_1)^2 \dots -(v_p)^2 + (v_{p+1})^2 + \dots + (v_{p+q})^2, \]
in a suitable basis of $T_xM$. A tangent vector $v \in TM$ is \emph{timelike} (resp.\ \emph{lightlike, spacelike}) if $g(v, v)$ is negative (resp.\ null, resp.\ positive); it is \emph{causal} if it is either timelike or lightlike. This enables us to talk about \emph{timelike curves} in $M$ (resp.\ \emph{causal}, resp.\ \emph{lightlike}, resp.\ \emph{spacelike}).

A smooth map $\varphi: (M, [g_M]) \to (N, [g_N])$ is \emph{conformal} if $\varphi^* g_N \in [g_M]$, which is the same as saying that $\varphi$ sends causal curves to causal curves and spacelike curves to spacelike curves. We will write $\Conf(M)$ for the group of conformal automorphisms of $M$, and we will call it the \emph{conformal group} of $M$.

\subsubsection{Invariance of lightlike geodesics. }In general, different metrics within the same conformal class of a conformal manifold will define geometrically distinct geodesics. However, the notion of unparametrized lightlike geodesic has a conformal meaning.

\begin{theorem}
    Two conformally equivalent metrics have the same lightlike pregeodesics.
\end{theorem}

In fact, lightlike geodesics carry a invariant family of parameters called \emph{projective}, see \cite{markowitz_1981}. We won't need that degree of generality however.


\subsubsection{Conformal spacetimes. }\label{spacetime}When the metric $g$ has signature $(1,n-1)$ where $n=\dim(M)$, we say that $g$ is a Lorentzian metric and that $(M, [g])$ is a \emph{conformal Lorentzian manifold}. In every tangent space of a conformal Lorentzian manifold, the set of nonzero causal vectors has two connected components. If there exists a global continuous choice of such connected components on $M$, we say that $M$ is \emph{time-orientable}. A continuous choice $x\mapsto F_x\subset T_xM$ of connected components of causal vectors is called a \emph{time-orientation}. Causal vectors belonging (resp.\ not belonging) to these components are called \emph{future directed} (resp.\ \emph{past directed}).  
A conformal Lorentzian manifold that is time-oriented is called a \emph{conformal spacetime}.
Given a conformal spacetime $(M, [g])$ and $x \in M$, we write $I^+(x)$ (resp.\ $J^+(x)$) for the union of all endpoints of future timelike (resp.\ causal) curves starting at $x$. Similarly, we write $I^-(x)$ (resp.\ $J^-(x)$) for the union of all endpoints of past timelike (resp.\ causal) curves starting at $x$.

\subsubsection{The Minkowski space and the Einstein universe. } For arbitrary $p,q \in \mathbf{N}$, we will write $\R^{p,q}$ for the vector space $\R^{p+q}$ endowed with a bilinear form $\b$ of signature $(p,q)$. This is a complete flat pseudo-Riemannian manifold called the \emph{Minkowski space}. We will also write $\R^{p,q}$ for the associated conformal manifold $\R^{p,q} = (\R^{p+q},[\b])$ and we will also call it the \emph{Minkowski space}. 
 
The \emph{Einstein universe} is the space, denoted by $\Ein^{p,q}$, defined as 
\begin{equation*}
    \Ein^{p,q} = \{\mathbf{P}(v) \in \mathbf{P}(\mathbf{R}^{p+1,q+1}) \mid \b(v,v)  = 0\},
\end{equation*}
where $\mathbf{P}(v)$ is the line of $\mathbf{R}^{p+1,q+1}$ generated by $v \neq 0$. In other words $\Ein^{p,q}$ is the set of isotropic lines in $\R^{p+1,q+1}$. It is endowed with a natural pseudo-Riemannian conformal class $\left[\b\vert_{\Ein^{p,q}}\right]$ coming from the metric on $\R^{p+1,q+1}$. The Einstein universe is a compact pseudo-Riemannian conformal manifold of signature $(p,q)$. It admits a 2-sheeted conformal cover by $(\S^p \times \S^q,[-g_{\S^{p}} \oplus g_{\S^{q}}])$ whose nontrivial deck transformation is $(x, y) \mapsto (-x, -y)$. The natural action of $\PO(p+1,q+1)$ on $\Ein^{p,q}$ is by conformal automorphisms, and the conformal group of $\Ein^{p,q}$ coincides with $\PO(p+1,q+1)$. More generally, we have the following fundamental result which is due to Liouville in the Riemannian setting (see \cite{france_liouville}):
\begin{theorem}
    Let $U,V$ be connected domains of $\Ein^{p,q}$, and let $\varphi:U\to V$ be a conformal map. If $p+q\geq 3$, then there is a unique element $g\in\PO(p+1,q+1)$ such that $g\vert_U=\varphi$.
\end{theorem}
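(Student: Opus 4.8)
The strategy is to separate the statement into a uniqueness part, which is elementary linear algebra on the isotropic quadric and needs no restriction on $p+q$, and an existence part, which is the genuine Liouville rigidity phenomenon and is where $p+q\ge 3$ enters; the existence part I would reduce to the classical local Liouville theorem in Minkowski space. For uniqueness, suppose $g_1,g_2\in\PO(p+1,q+1)$ both restrict to $\varphi$ on $U$ and put $h:=g_2^{-1}g_1$, so $h$ fixes $U$ pointwise; lift $h$ to $\tilde h\in O(p+1,q+1)$. The preimage $\widetilde U\subset\R^{p+1,q+1}$ of $U$ under the projectivization map is a nonempty open subcone of the isotropic cone; it spans $\R^{p+1,q+1}$ because $\Ein^{p,q}$ lies in no projective hyperplane, and every $v\in\widetilde U$ is an eigenvector of $\tilde h$ since $h$ fixes the line $\P(v)$. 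Pick a basis $(v_1,\dots,v_{p+q+2})$ of $\R^{p+1,q+1}$ contained in $\widetilde U$, with $\tilde h v_i=\lambda_i v_i$; as $\widetilde U$ is open and spanning it contains a vector $w=\sum_i c_iv_i$ with all $c_i\neq 0$, and since $w$ is an eigenvector of $\tilde h$ all the $\lambda_i$ must agree, so $\tilde h$ is scalar and $h=\mathrm{id}$. Hence $g_1=g_2$: there is at most one $g$ as in the statement, and any element of $\PO(p+1,q+1)$ fixing a nonempty open subset of $\Ein^{p,q}$ pointwise is trivial.

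\emph{Existence, local step.} Recall the classical local Liouville theorem: if $W\subset\R^{p,q}$ is open and connected and $p+q\ge 3$, then every conformal map $f:W\to\R^{p,q}$ is the restriction of a Möbius transformation, i.e.\ of an element of $\PO(p+1,q+1)$ acting on $\Ein^{p,q}\supset\R^{p,q}$. I would either invoke this (see e.g.\ \cite{markowitz_1981,france_liouville}) or reprove it by the standard argument: writing $f^{\ast}\b=e^{2\sigma}\b$ and differentiating the conformality relations, the flatness of $e^{2\sigma}\b$ (pulled back from $\R^{p,q}$) forces its Schouten tensor to vanish, which reads $\Hess(\sigma)=d\sigma\otimes d\sigma-\tfrac12\,\b(\nabla\sigma,\nabla\sigma)\,\b$ for the flat connection of $\R^{p,q}$; equivalently $\Hess(e^{-\sigma})$ is pointwise proportional to $\b$, whence $e^{-\sigma}(x)=c_0+\b(v,x)+c_1\,\b(x,x)$ for constants $c_0,c_1$ and a vector $v$. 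These are exactly the conformal factors of compositions of a linear conformal map, a translation and at most one inversion, so $f$ coincides near each point with such a map; connectedness of $W$ together with the uniqueness part (applied to small balls, which are nonempty open subsets of $\Ein^{p,q}$) then upgrades this to a single Möbius transformation.

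\emph{Existence, globalisation.} Now let $\varphi:U\to V$ be a conformal map between domains of $\Ein^{p,q}$. Cover $U$ by connected open sets $U_i$ such that $U_i$ and $\varphi(U_i)$ are each contained in a stereographic chart $\Ein^{p,q}\smallsetminus C(x)\cong\R^{p,q}$, which is possible since the removed lightcones $C(x)$ are nowhere dense. Read in such charts, $\varphi|_{U_i}$ is a conformal map between open subsets of $\R^{p,q}$, hence by the local step the restriction of some $g_i\in\PO(p+1,q+1)$. On a nonempty open subset of $U_i\cap U_j$ both $g_i$ and $g_j$ restrict to $\varphi$, so $g_i=g_j$ by the uniqueness part; since $U$ is connected, chaining the $U_i$ yields a single $g\in\PO(p+1,q+1)$ with $g\vert_U=\varphi$, unique by the uniqueness part.

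\emph{Main obstacle.} The crux is the local step. In dimension $p+q=2$ the statement is false (every holomorphic map of planar domains is conformal), so the argument must use the conformality equations that become available only when $p+q\ge 3$ — concretely, the rigidity of the overdetermined system for $\sigma$ above, equivalently the fact that a conformal map is determined by a $2$-jet. One must also watch regularity (the computation uses $\varphi\in C^{3}$, which smoothness provides) and, in the gluing, the point that the $g_i$ agree as group elements rather than merely on a single component of an overlap — which is exactly what the uniqueness part delivers.
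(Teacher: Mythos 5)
The paper does not actually prove this statement: it is quoted as the classical pseudo-Riemannian Liouville theorem and attributed to the literature (Liouville in the Riemannian case, with a reference to \cite{france_liouville}), so there is no in-paper argument to compare against. Your proposal is, in effect, a proof of that cited classical result, and it follows the standard route: uniqueness by observing that an element of $\PO(p+1,q+1)$ fixing a nonempty open subset of $\Ein^{p,q}$ pointwise has every vector of the corresponding open subcone of the isotropic cone as an eigenvector, hence is scalar; local existence by the conformal-factor computation (vanishing Schouten tensor forces $\Hess(e^{-\sigma})$ proportional to $\b$, so $e^{-\sigma}$ is a polynomial of degree at most two of the stated form); and globalization over a connected $U$ by covering with chart-adapted pieces and gluing via the uniqueness statement. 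This is correct in outline and is exactly the kind of argument the cited references contain. Two small points deserve more care than your sketch gives them. First, in the uniqueness step, the existence of $w\in\widetilde U$ with all coordinates $c_i\neq 0$ is not automatic from spanning alone; you should note that each hyperplane meets $\Ein^{p,q}$ in a set with empty interior (a lightcone or a lower-dimensional Einstein quadric), so a nonempty open subset of $\Ein^{p,q}$ cannot be covered by finitely many hyperplane sections. Second, in the local step, knowing that the conformal factor of $f$ agrees with that of some M\"obius transformation does not by itself yield $f$ equal to a M\"obius map near a point: one should compose $f$ with the inverse of such a transformation to reduce to conformal factor identically $1$, and then use that a local isometry of flat $\R^{p,q}$ has parallel, hence constant, differential and is therefore affine. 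Both repairs are routine, and with them your proof is complete; the restriction $p+q\geq 3$ enters exactly where you say it does.
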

In particular, for a connected domain $\O\subset\Ein^{p,q}$, the conformal group of $\O$ is precisely the subgroup of $\PO(p+1,q+1)$ of all transformations $g$ preserving $\O$.

\subsubsection{Conformally flat manifolds. }In this article, we will restrict our study to \emph{conformally flat manifolds}.
We say that a conformal manifold $(M,[g])$ is \emph{conformally flat} if it is locally conformally equivalent to an open subset of the Minkowski space. A consequence of Liouville's theorem is that  conformally flat pseudo-Riemannian manifolds inherit a canonical $(\PO(p+1,q+1),\Ein^{p,q})$-structure (for basic properties of $(G,X)$-structures, see \cite{Thurston}). This implies the following.

\begin{prop}
\label{Developingmap}
    Let $M$ be a conformally flat manifold. Then there exists a conformal map $\dev:\widetilde{M}\to \Ein^{p,q}$. If $\dev^\prime:\widetilde{M}\to \Ein^{p,q}$ is another such conformal map, then there is a unique element $g\in\PO(p+1,q+1)$ such that $\dev^\prime=g\circ\dev$.  
\end{prop}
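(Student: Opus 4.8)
The plan is to invoke the standard $(G,X)$-structure machinery. First I would unwind the observation made just before the statement: by Liouville's theorem (the theorem immediately preceding this proposition, valid since $p+q\geq 3$), any conformal transition map between connected open subsets of $\Ein^{p,q}$ extends uniquely to an element of $G:=\PO(p+1,q+1)$. Since $(M,[g])$ is conformally flat, it is by definition covered by charts $\varphi_i:U_i\to \Ein^{p,q}$ (open subsets of Minkowski space sit conformally inside $\Ein^{p,q}$ via stereographic projection) whose transition maps $\varphi_i\circ\varphi_j^{-1}$ are conformal between open subsets of $\Ein^{p,q}$; by Liouville these transitions are restrictions of elements of $G$. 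Hence $(M,[g])$ carries a $(G,\Ein^{p,q})$-structure in the sense of Thurston \cite{Thurston}, and the conclusion is precisely the existence of a developing map for a $(G,X)$-structure, which I would simply cite.

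Concretely, the existence part of the argument I would present is the usual analytic continuation / gluing construction: fix a basepoint $\tilde x_0\in\widetilde M$ lying over a point in some chart domain $U_{i_0}$, and define $\dev$ near $\tilde x_0$ by (a lift of) $\varphi_{i_0}$. Given any $\tilde x\in\widetilde M$, choose a path in $\widetilde M$ from $\tilde x_0$ to $\tilde x$, cover its image by finitely many chart domains $U_{i_0},\dots,U_{i_k}$ in order, and successively modify the local chart $\varphi_{i_{j}}$ by the unique element $g_{i_{j-1}i_j}\in G$ realizing the transition, so that the charts agree on overlaps along the path; set $\dev(\tilde x)$ to be the value of the final adjusted chart. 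One then checks this is independent of the choices: independence of the chart cover along a fixed path is immediate from the cocycle relation for the $g_{ij}$, and independence of the path follows because $\widetilde M$ is simply connected (homotopic paths give the same analytic continuation, again using the cocycle/uniqueness from Liouville). This produces a well-defined conformal local diffeomorphism $\dev:\widetilde M\to\Ein^{p,q}$.

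For uniqueness, suppose $\dev$ and $\dev'$ are two such maps. On a small connected open set $W\subset\widetilde M$ on which both $\dev$ and $\dev'$ are embeddings onto open subsets of $\Ein^{p,q}$, the map $\dev'\circ(\dev|_W)^{-1}$ is a conformal map between connected domains of $\Ein^{p,q}$, so by Liouville's theorem there is a unique $g\in G$ with $\dev'=g\circ\dev$ on $W$. It remains to propagate this identity to all of $\widetilde M$: the set of points of $\widetilde M$ admitting a neighborhood on which $\dev'=g\circ\dev$ is open by construction, nonempty, and closed (if it holds on a sequence of overlapping small sets it holds on their union, again by the uniqueness clause of Liouville applied on overlaps, forcing the same $g$); since $\widetilde M$ is connected, $\dev'=g\circ\dev$ globally, and $g$ is unique because $\dev$ is a submersion (so its image has nonempty interior, on which an element of $G$ is determined by Liouville).

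The only genuinely substantive input is Liouville's theorem, which is already granted in the excerpt; everything else is the routine $(G,X)$-structure formalism. Accordingly, the main (mild) obstacle to a careful write-up is purely bookkeeping: organizing the analytic-continuation-along-paths argument and the homotopy-invariance check cleanly, and being careful that the group elements produced at each gluing are the \emph{unique} ones supplied by Liouville so that the cocycle conditions hold on the nose. In practice I would keep the exposition short by citing \cite{Thurston} for the general existence-and-uniqueness of developing maps for $(G,X)$-structures and merely remarking that conformal flatness together with Liouville's theorem endows $M$ with such a structure with $G=\PO(p+1,q+1)$ and $X=\Ein^{p,q}$.
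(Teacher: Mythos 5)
Your proposal is correct and follows exactly the route the paper takes: the paper gives no separate proof, simply noting that Liouville's theorem endows a conformally flat manifold with a canonical $(\PO(p+1,q+1),\Ein^{p,q})$-structure and citing the standard developing-map existence and uniqueness for $(G,X)$-structures from \cite{Thurston}. Your sketch of the analytic continuation and the open-closed propagation argument is just an expanded version of that same standard machinery.
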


A map $\dev:\widetilde{M}\to \Ein^{p,q}$ as in Proposition~\ref{Developingmap} is called a \emph{developing map} for $M$. A $(\PO(p+1,q+1),\Ein^{p,q})$-structure on $M$ is also encoded by the data of a \emph{holonomy morphism} $\hol:\pi_1(M)\to\PO(p+1,q+1)$. However, we will not use $\hol$ in this article. 

\subsection{Geometry of the Einstein universe}\label{geom_Ein_universe}

We briefly recall the basic tools for studying $\Ein^{p,q}$. For a more general overview, see \cite{Frances_LKG} and \cite{Primer}. 

\subsubsection{Photons and lightcones. }We write $C^{p+1,q+1}$ for the isotropic cone of $\R^{p+1,q+1}$ and $\pi: C^{p+1,q+1} \to \Ein^{p,q}$ for the projectivization map. The lightlike geodesics of $\Ein^{p,q}$ are precisely the curves $\pi(\Pi)$, where $\Pi \subset \R^{p+1,q+1}$ is a totally isotropic 2-plane. Such a curve is called a \emph{photon}. The union of all photons passing through a point $x \in \Ein^{p,q}$, called the \emph{lightcone} of $x$, will be denoted by $C(x)$. The lightcone $C(x)$ is the image under $\pi$ of $C^{p+1,q+1} \cap x^\perp$. It is a singular hypersurface of signature $(0, -,\dots, -, +\dots, +)$ that is foliated by photons. In fact, the set $C(x) \smallsetminus \{x\}$ is a smooth submanifold that is conformal to $(\R \times \Ein^{p-1,q-1}, [0 \oplus(- g_{\S^{p-1}} )\oplus g_{\S^{q-1}}])$, where $(- g_{\S^{p-1}} )\oplus g_{\S^{q-1}}$ is the quotient metric on $\Ein^{p-1,q-1}=\S^{p-1}\times\S^{q-1}/_{(x,y)\sim(-x,-y)}$.

\begin{rmk}
    Given a point $x \in \Ein^{p,q}$, the lightcone of $x$ coincides with the nontransverse the set $Z_x$ in the sense of \cite[Sect.\ 1.6]{Zimpropqh}. In the setting of general flag manifolds, the set $Z_x$ is classically called a \emph{maximal proper Schubert subvariety} of $\Ein^{p,q}$.
\end{rmk}

\subsubsection{Affine charts. }Given a point $x\in \Ein^{p,q}$, the open set $\Ein^{p,q} \smallsetminus C(x)$ can be conformally identified with $\mathbf{R}^{p,q}$. An identification $\Ein^{p,q} \smallsetminus C(x) \simeq \mathbf{R}^{p,q}$ is called an \emph{affine chart} or a \emph{stereographic projection}. With such an identification, the open set $\Ein^{p,q} \smallsetminus C(x)$ inherits a canonical affine structure, independent of the conformal identification $\Ein^{p,q} \smallsetminus C(x) \simeq \mathbf{R}^{p,q}$. This identification is the reason why $\Ein^{p,q}$ is sometimes referred to as the \emph{conformal compactification of }$\mathbf{R}^{p,q}$: the Einstein universe is a compact conformal manifold containing a conformal copy of the Minkowski space as an open and dense subset.

\subsubsection{Cartan decomposition of $\PO(p+1,q+1)$. } Let $1\leq p\leq q$ and let $G=\PO(p+1,q+1)$. We fix a basis in which the metric of $\R^{p+1,q+1}$ takes the expression 
$\b(v,v)=v_0v_{p+q+1}+\dots+v_pv_{q+1}+(v_{p+1})^2+\dots+(v_{q})^2,$
and we write elements of $G$ in that basis.
Let us recall the~$KA^+K$ decomposition of~$G$. Let $A^+$ be the set of diagonal matrices 
$$\diag\left(\lambda_0,\dots,\lambda_p,1,\dots,1,\lambda_p^{-1},\dots,\lambda_0^{-1}\right)$$
such that $\lambda_0\geq\lambda_1\geq\dots\geq\lambda_p\geq 1$. There exists a maximal compact subgroup~$K$ of~$G$ such that any element~$g \in G$ can be written $g=\kappa a\kappa^\prime$ with $\kappa,\kappa^\prime\in K$ and $a\in A^+$. With this convention, the element $a$ is uniquely determined by $g$.

\subsubsection{Dynamics on the Einstein universe. }\label{dynamics} Let $(g_k)$ be a sequence in $\PO(p+1,q+1)$ and let $\lambda_0(k)\geq\dots\geq\lambda_p(k)\geq 1$ be the associated sequences of eigenvalues for the matrix $a_k\in A$ in the Cartan decomposition of $g_k$ as above. It is possible to describe geometrically the dynamic behaviour of $(g_k)$ on $\Ein^{p,q}$ in terms of the sequences $(\lambda_i(k))$ (see \cite{Frances_LKG} in the Lorentzian case and \cite{chalumeau} for general signature). Since the general setting is slightly technical, we decided not to introduce it in this paper. We will only use the notion of \emph{contracting} sequence in the proof of Proposition \ref{extremal_implique_conedisjoint}. We say that~$(g_k)$ is \emph{contracting} if there exist two points $x,y\in\Ein^{p,q}$ (possibly equal) such that the restriction of $(g_k)$ to $\Ein^{p,q}\smallsetminus C(y)$ converges uniformly on compact subsets to the constant map equal to $x$. In that case, we will also say that $(g_k)$ is $(x,y)$\emph{-contracting}. In terms of the Cartan decomposition, this contracting condition is equivalent to saying that $\lambda_0(k)/\lambda_i(k)\to+\infty$ as $k\to+\infty$, for every $i\in\{1,\dots,p\}$. 

\begin{fact}[See e.g.\ {\cite[Appendix A]{weisman2022extended}}]
    \label{fact_dyna}
    Let $(g_k)$ be a sequence of $\PO(p+1,q+1)$. Assume that there exists a compact subset $\mathcal{C}\subset\Ein^{p,q}$ with nonempty interior such that $(g_k(\mathcal{C}))$ converges to a point with respect to the Hausdorff topology. Then $(g_k)$ is contracting. 
\end{fact}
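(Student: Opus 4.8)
The plan is to strip off the compact factors in a Cartan decomposition $g_k = \kappa_k a_k \kappa_k'$ and to observe that collapsing an open set to a point forces the renormalized limit of the diagonal parts $a_k$ to be a rank-one linear map, which is precisely the condition $\lambda_0(k)/\lambda_i(k) \to +\infty$. Fix an auxiliary Riemannian metric $\rho$ on $\Ein^{p,q}$, so the hypothesis reads $\operatorname{diam}_\rho(g_k(\mathcal{C})) \to 0$. Suppose, for contradiction, that $(g_k)$ is not contracting: there are $i_0 \in \{1, \dots, p\}$ and $\varepsilon > 0$ such that, after passing to a subsequence, $\lambda_{i_0}(k)/\lambda_0(k) \ge \varepsilon$. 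Passing to a further subsequence, use compactness of $K$ to assume $\kappa_k \to \kappa$ and $\kappa_k' \to \kappa'$, and boundedness of the relevant ratios to assume $\lambda_i(k)/\lambda_0(k) \to \mu_i \in [0,1]$ for $i = 1, \dots, p$ (so $\mu_{i_0} \ge \varepsilon$), $1/\lambda_0(k) \to \mu_\infty$ and $1/(\lambda_0(k)\lambda_i(k)) \to \nu_i$. Since $K$ is compact and acts smoothly on the compact manifold $\Ein^{p,q}$, its elements are uniformly bi-Lipschitz for $\rho$, so $\operatorname{diam}_\rho(g_k(\mathcal{C})) \to 0$ forces $\operatorname{diam}_\rho\!\big(a_k \kappa_k'(\mathcal{C})\big) \to 0$.

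Next I would locate a fixed open ball inside a good affine chart. As each $g_k$ is a homeomorphism of $\Ein^{p,q}$, necessarily $\mathcal{C} \neq \Ein^{p,q}$, so $\operatorname{int}(\mathcal{C})$ is a nonempty proper open set; since the lightcone $C([e_{p+q+1}])$ has empty interior, the open set $\kappa'(\operatorname{int}\mathcal{C}) \smallsetminus C([e_{p+q+1}])$ is nonempty. Because $\kappa_k' \to \kappa'$ uniformly, together with inverses, one checks that some fixed closed ball $\overline{B_0} \subset \Ein^{p,q} \smallsetminus C([e_{p+q+1}])$ satisfies $\overline{B_0} \subset \kappa_k'(\operatorname{int}\mathcal{C})$ for all large $k$; hence $\operatorname{diam}_\rho(a_k(B_0)) \to 0$. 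Identify $\Ein^{p,q} \smallsetminus C([e_{p+q+1}])$ with the affine chart $\{v_0 \ne 0\} \subset \mathbf{P}(\R^{p+1,q+1})$. There $a_k$ is represented by the linear map $a_k/\lambda_0(k)$, which converges entrywise to the diagonal matrix $M = \operatorname{diag}(1, \mu_1, \dots, \mu_p, \mu_\infty, \dots, \mu_\infty, \nu_p, \dots, \nu_1, \mu_\infty^2)$; since $(Mv)_0 = v_0$, the projective map $\mathbf{P}(M)$ is well defined on $\{v_0 \ne 0\}$ and $a_k \to \mathbf{P}(M)$ uniformly on $\overline{B_0}$.

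Finally, the collapse $\operatorname{diam}_\rho(a_k(B_0)) \to 0$ together with this uniform convergence shows that $\mathbf{P}(M)$ is constant on $B_0$, that is, $M$ sends the cone over $B_0$ into a single line. No hyperplane of $\R^{p+1,q+1}$ contains the cone over a nonempty open subset of $\Ein^{p,q}$ (which one sees directly in an affine chart, using $p, q \ge 1$), so that cone spans $\R^{p+1,q+1}$ and $M$ has rank at most one; as its $(0,0)$-entry equals $1$, all its other diagonal entries vanish, in particular $\mu_{i_0} = 0$, contradicting $\mu_{i_0} \ge \varepsilon$. Therefore $\lambda_i(k)/\lambda_0(k) \to 0$ for every $i \in \{1, \dots, p\}$, i.e.\ $(g_k)$ is contracting. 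I expect the only genuinely delicate points to be the uniform bi-Lipschitz bound on $K$ and the extraction of the fixed ball $B_0$ adapted to the factors $\kappa_k'$; once the problem is pushed to the diagonal maps $a_k$, the rest is an explicit computation with diagonal matrices.
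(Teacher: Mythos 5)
Your proof is correct. There is nothing in the paper to compare it against: Fact~\ref{fact_dyna} is quoted from \cite[Appendix A]{weisman2022extended} without proof, and your argument is a self-contained version of the standard $KAK$ reasoning. The chain of reductions is sound: the uniform bi-Lipschitz bound on the compact factor is immediate from smoothness of the action of the compact group $K$ on the compact manifold $\Ein^{p,q}$; the extraction of a fixed closed ball $\overline{B_0}\subset\kappa_k'(\operatorname{int}\mathcal{C})\smallsetminus C([e_{p+q+1}])$ for large $k$ works exactly as you indicate (a compact subset of the open set $\operatorname{int}\mathcal{C}$ stays inside it under the uniformly converging maps $(\kappa_k')^{-1}$); in the chart $\{v_0\neq 0\}$ the maps $a_k$ are represented by $a_k/\lambda_0(k)\to M$, and since a nondegenerate quadric is not contained in any hyperplane (your polynomial-vanishing argument in an affine chart), collapsing $B_0$ forces $\operatorname{rank}M\le 1$, hence all diagonal entries of $M$ other than the $(0,0)$-entry vanish, contradicting $\mu_{i_0}\ge\varepsilon$. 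Two remarks. First, you prove the eigenvalue form of the conclusion, $\lambda_0(k)/\lambda_i(k)\to+\infty$ for all $i$, and then invoke the equivalence with the dynamical definition of ``contracting''; this is legitimate here because the paper asserts that equivalence in Section~\ref{dynamics}, but if you wanted full self-containment, note that your computation gives the dynamical statement almost directly: along your subsequence one must have $\lambda_0(k)\to+\infty$ (otherwise $(g_k)$ subconverges in $\PO(p+1,q+1)$ and $g_k(\mathcal{C})$ cannot collapse to a point), so $M=\diag(1,0,\dots,0)$ and $g_k=\kappa_k a_k\kappa_k'$ converges uniformly on compact subsets of $\Ein^{p,q}\smallsetminus C\bigl((\kappa')^{-1}[e_{p+q+1}]\bigr)$ to the constant $\kappa([e_0])$. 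Second, when you deduce that $\mathbf{P}(M)$ is constant on $B_0$ from $\operatorname{diam}_\rho(a_k(B_0))\to 0$, either observe that $\mathbf{P}(M)(x)$ lies in $\Ein^{p,q}$ (as a limit of points of the closed subset $\Ein^{p,q}$) or carry out the triangle-inequality estimate with a metric on all of $\mathbf{P}(\R^{p+1,q+1})$; this is purely cosmetic.
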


\section{Proper domains of the Einstein universe}\label{sect_proper_domains}

\subsection{First definitions}\label{sect_hom_dom} 
Let $ \O \subset \Ein^{p,q}$ be a domain, i.e.\ a connected open subset. We say that $\O$ is \emph{proper} if there exists some $x \in \Ein^{p,q}$ with $\overline{\O} \cap C(x) = \emptyset$. In other words, the domain $\O$ appears as a bounded domain in $\mathbf{R}^{p,q}$ by means of the affine chart $\Ein^{p,q} \smallsetminus C(x)$. This notion coincides with the notion of properness of $\O$ as a subset of the flag manifold $\Ein^{p,q}$ recalled in the introduction.

\begin{definition}
    We say that a domain $\O\subset\Ein^{p,q}$ is \emph{quasi-homogeneous} (resp.\ \emph{homogeneous}, resp.\ \emph{divisible}) if there exists some compact subset $\mathcal{K} \subset  \O$ such that $\O = \Conf(\O) \cdot \mathcal{K}$ (resp.\ if $\Conf(\O)$ acts transitively on $\O$, resp.\ if there exists a discrete subgroup $\Gamma\subset\Conf(\O)$ and a compact subset $\mathcal{K} \subset  \O$ such that $\O = \Gamma \cdot \mathcal{K}$). We say that $\O$ is \emph{almost-homogeneous} if for every point $a \in \partial \O$ there exist $(g_k) \in \Conf(\O)^{\mathbf{N}}$ and a sequence $(x_k) \in \O^{\mathbf{N}}$ converging in $\O$ such that $g_k \cdot x_k \rightarrow a$. 
\end{definition}

Divisibility and homogeneity both imply quasi-homogeneity, which itself implies almost-homogeneity. If $\O$ is proper, then it is almost-homogeneous if, and only if, for all $a \in \partial \O$ there exist $(g_k) \in \Conf(\O)^{\mathbf{N}}$ and $x \in \O$ such that $g_k \cdot x \rightarrow a$ (see Remark~\ref{prop_asymp_dist_mark} below).

\subsection{Dual convexity}
\label{sectiondualconvex}

A strong feature of the theory of proper almost-homogeneous domains of flag manifolds is the notion of dual convexity. 
\begin{definition}
    We say that a domain $\O\subset\Ein^{p,q}$ is \emph{dually convex} if, for every $a\in \partial{\O}$ there is a point $b\in \Ein^{p,q}$ such that $a\in C(b)$ and $C(b)\cap \O=\emptyset$.
    In that case we say that $C(b)$ is a \emph{supporting lightcone} of $\O$ at $a$.
\end{definition}
  This notion was first introduced by A. Zimmer \cite{Zimpropqh} for arbitrary flag manifolds. The author proves that for a proper domain $\O$, dual convexity is equivalent to the completeness of an invariant distance function on $\O$: the \emph{Caratheodory metric $C_\O$}. Since the conformal group of $\O$ acts isometrically on $(\O,C_\O)$, any proper quasi-homogeneous domain is a complete metric space and hence it is dually convex. 

\begin{fact}[\cite{Zimpropqh}]
    Let $\O$ be a proper quasi-homogeneous domain of $\Ein^{p,q}$. Then $\O$ is dually convex.
\end{fact}

In this paper, we will need a slightly stronger result, whose proof relies on the ideas of~\cite{Zimpropqh}:  

\begin{prop}
    \label{propreetqhimpliquedc}
    Let $\O$ be a proper almost-homogeneous domain of $\Ein^{p,q}$. Then $\O$ is dually convex.
\end{prop}

\begin{proof}
    Let $\hat{\Omega}$ be the interior of the intersection of all proper dually convex domains containing $\Omega$. Then $\hat{\Omega}$ is a proper $\Conf(\Omega)$-invariant domain containing $\Omega$.
    In particular, the action of $\Conf(\Omega)$ on $\hat{\Omega}$ is proper. This follows from the fact that $\hat{\Omega}$ admits a conformally invariant distance (see Proposition \ref{prop_mark_hyp} or \cite{Zimpropqh}).
    Let us show that $\Omega$ is closed in $\hat{\Omega}$. Let $y\in \partial\Omega$. Since $\Omega$ is almost homogeneous, we can find $x\in \Omega$ and a sequence $(g_k)\in\Conf(\Omega)^\N$ such that $g_k(x)$ converges to $y$. The sequence $(g_k)$ must diverge, hence $g_k(x)$ converges to the boundary of $\hat{\Omega}$ since $\Conf(\Omega)$ acts properly on $\hat{\Omega}$. Therefore $y\in \partial\hat{\Omega}$, hence $\Omega$ is closed in $\hat \Omega$. Therefore $\Omega$ is a connected component of a dually convex domain, so $\Omega$ is also dually convex.  $\qedhere$
\end{proof}

This result and its proof can be found in \cite[Prop. 3.1.11]{galiaythesis} for domains in general flag manifolds.

\subsection{Diamonds} Let $p,q \geq 2$. We define \emph{diamonds}, which are models for the symmetric space of $\PO(p,1) \times \PO(1,q)$ in $\Ein^{p,q}$. Diamonds are proper symmetric domains, all conformally equivalent to each other, and by Theorem \ref{mainth}, they will be the only proper almost-homogeneous domain in $\Ein^{p,q}$. We start with the case where $p=1$, where diamonds are well known and admit an explicit description in terms of causality (see Section \ref{defdiamants} below). In Section \ref{sect_higher_diamonds}, we generalize the construction of diamonds to any signature.

\subsubsection{The Lorentzian diamond. }\label{defdiamants}  We write $\b(x,x)=-x_1^2+x_2^2+\dots+x_n^2$ for the bilinear form on $\mathbf{R}^{1,n-1}$. Given a point $x\in\R^{1,n-1}$, its future is the set $I^+(x)=\linebreak\{a\in\R^{1,n-1}\,\vert\,\b(a-x,a-x)<0\text{ and }\b(a-x,e_1)<0\}$. Similarly the past of $x$ is the set $I^-(x)=\{a\in\R^{1,n-1}\,\vert\,\b(a-x,a-x)<0\text{ and }\b(a-x,e_1)>0\}$. Now let $x,y\in\R^{1,n-1}$ such that $y\in I^+(x)$. The \emph{diamond defined by $x$ and $y$} is the proper domain, denoted by $\Diams(x,y)$, defined by  $$\Diams(x,y)=I^+(x)\cap I^-(y).$$
See Figure \ref{Diamant_et_D^pq}. More generally, let $x,y\in\Ein^{1, n-1}$ be two points such that $x\not\in C(y)$. Then $\Ein^{1, n-1}\smallsetminus(C(x)\cup C(y))$ has three connected components, two of which are proper and conformal to each other. These components are called the diamonds generated by $x$ and $y$. If $\Ein^{1,n-1}$ is endowed with a time orientation, we write $\Diams(x,y)$ for the connected component that contains a future directed curve joining $x$ to $y$. If $x$ and~$y$ belong to an affine chart where they are chronologically related, and if the induced orientation is the natural one on $\mathbf{R}^{1,n-1}$, we obtain the same definition as before. Note that $\PO(2,n)$ acts transitively on the set of diamonds, so the conformal structure of~$\Diams(x,y)$ is independent of the choice of $x$ and $y$. We write $\Diams^{1,n-1}$ for a model space with this conformal structure.

\begin{prop}
    $\Diams^{1,n-1}$ is conformally equivalent to $(\R\times\H^{n-1},[-dt^2\oplus g_{\H^{n-1}}]).$
\end{prop}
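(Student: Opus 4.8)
The plan is to exhibit an explicit conformal diffeomorphism between the model diamond $\Diams^{1,n-1}=\Diams(x,y)\subset\R^{1,n-1}$ and the product $(\R\times\H^{n-1},[-dt^2\oplus g_{\H^{n-1}}])$. Since $\PO(2,n)$ acts transitively on diamonds, I am free to pick the most convenient pair $(x,y)$; the natural choice is $x=(-1,0,\dots,0)$ and $y=(1,0,\dots,0)$, so that $\Diams(x,y)=I^+(x)\cap I^-(y)$ is the ``double cone'' $\{a\in\R^{1,n-1}\mid \b(a-x,a-x)<0,\ \b(a-y,a-y)<0,\ -1<a_1<1\}$. First I would introduce coordinates adapted to this cone: writing a point as $(s,\xi)$ with $s=a_1\in(-1,1)$ and $\xi=(a_2,\dots,a_n)\in\R^{n-1}$, the diamond is $\{(s,\xi)\mid |\xi|^2<\min((1+s)^2,(1-s)^2)=(1-|s|)^2\}$, i.e. $|\xi|<1-|s|$.

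The key computation is to find the conformal factor. The flat metric is $\b=-ds^2+|d\xi|^2$ on this region. I would look for a substitution of the form $(s,\xi)\mapsto(t,z)$ with $t\in\R$ a ``time'' coordinate and $z$ living in the unit ball model of $\H^{n-1}$, such that $-ds^2+|d\xi|^2 = \Omega(s,\xi)^2\big(-dt^2 + g_{\H^{n-1}}(z)\big)$ for some positive function $\Omega$. A clean way to organize this: first map the diamond conformally onto a ``cylinder'' by a coordinate change that separates the $s$-direction from the transverse directions. Concretely, I expect that setting (up to rescaling) something like $t=\tfrac12\log\tfrac{1+s+|\xi|\cdot(\dots)}{1-s+(\dots)}$ — i.e. a coordinate built from the two null coordinates $u_\pm = (1\pm s)$ rescaled by the radial transverse variable — will do the job, in the same spirit as the standard conformal identification of a Lorentzian diamond with the Einstein static cylinder. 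The honest approach is: pass to null coordinates $u=1-s-|\xi|$, $v=1+s-|\xi|$ wait — better, use the causal structure directly. Every point of $\Diams(x,y)$ lies on a unique timelike geodesic segment from $x$ to $y$? No — that's false in dimension $\geq 3$. Instead I would use the well-known fact that $I^+(x)$ itself is conformal to $\R^{1,n-1}$ (it is an affine chart after a conformal transformation, or directly: $I^+(0)$ with metric $-ds^2+|d\xi|^2$ is conformal to $\R\times\H^{n-1}$ via the cone-coordinates $s=e^\tau\cosh r$-type substitution), and then intersect with $I^-(y)$ to cut the $\R$-factor down.

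So the concrete steps are: (1) normalize $x,y$; (2) on $I^+(x)\cong$ future cone, introduce coordinates $(\tau,w)$ with $w\in\H^{n-1}$ (unit hyperboloid) and $\tau\in\R$ via $a-x=e^{\tau}\cdot(\text{point on the hyperboloid }\{\b(\cdot,\cdot)=-1,\ \text{future}\})$, check $\b|_{I^+(x)} = e^{2\tau}(-d\tau^2 + g_{\H^{n-1}})$ by a direct computation in these ``polar'' coordinates; (3) observe that the extra condition ``$a\in I^-(y)$'' becomes, in these coordinates, a condition of the form $\tau < \psi(w)$ or rather translates (after also doing the symmetric decomposition from $y$) into a bounded interval for a suitably modified time coordinate — and crucially that one can absorb $\psi$ by a further conformal change $dt = d\tau/\rho$, rescaling the $\H^{n-1}$ factor accordingly, to straighten the region to all of $\R\times\H^{n-1}$; (4) conclude that the composite map is conformal and bijective onto $\R\times\H^{n-1}$. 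The main obstacle I anticipate is step (3): verifying that the two cone-decompositions (from $x$ and from $y$) are compatible, i.e. that there is a single conformal change of the interval $(\tau_-(w),\tau_+(w))$ to $\R$ that is conformal for the product metric — equivalently, showing the region is a ``conformal cylinder'' and not a genuinely curved slab. This is where the special geometry of the diamond (as opposed to a general causally convex set) is used, and it will require writing the $I^-(y)$ condition carefully in the polar coordinates centered at $x$ and checking the cross-terms vanish appropriately.
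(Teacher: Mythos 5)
Your step (2) is exactly the paper's key computation: in polar coordinates $a=x+e^{\tau}w$ with $w$ on the future unit hyperboloid, the flat metric pulls back to $e^{2\tau}\left(-d\tau^{2}+g_{\H^{n-1}}\right)$, so $I^{+}(x)$ is conformal to $\R\times\H^{n-1}$. But your step (3) is a genuine gap, and you have correctly identified it as the weak point. In the coordinates centered at $x$, the condition $a\in I^{-}(y)$ cuts out the region $\{\tau<\log 2-d_{\H}(w,o)\}$, where $o\in\H^{n-1}$ is the point of the hyperboloid in the direction $y-x$; this is a genuinely slanted slab of the cylinder, not a product region. The proposed fix, a reparametrization $dt=d\tau/\rho$ with $\rho$ depending on $w$ together with a rescaling of the $\H^{n-1}$ factor, does not work as stated: a $w$-dependent change of the time coordinate introduces cross terms $dt\,dw$ and changes the conformal class of the product metric, so there is no elementary ``straightening'' of the slab of this form. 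Proving directly that this slab is conformally the whole cylinder is essentially as hard as the original statement.

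The paper avoids the problem entirely with one move you are missing: before introducing polar coordinates, apply an element of $\PO(2,n)$ (a stereographic projection, ``send $y$ to infinity''). Since $\Diams(x,y)$ is a connected component of $\Ein^{1,n-1}\smallsetminus(C(x)\cup C(y))$, once $y$ is the point at infinity of an affine chart containing $x$, the lightcone $C(y)$ becomes the cone at infinity and the diamond becomes exactly $I^{+}(x)\subset\R^{1,n-1}$, with no residual condition from $I^{-}(y)$. Then your step (2) concludes. So the repair is not a harder computation in your coordinates but an extra conformal normalization at the start; with that inserted, your argument coincides with the paper's proof.
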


\begin{proof}
    Let $x,y\in\R^{1,n-1}$ such that $y\in I^+(x)$. A stereographic projection argument shows that $\Diams(x,y)$ is conformally equivalent to $I^+(x)\subset\mathbf{R}^{1,n-1}$ (\emph{send $y$ to infinity}). Fix an origin $0\in\R^{1,n-1}$ and identify $\H^{n-1}$  with $\{z\in I^+(0)\,\vert\,\b(z,z)=-1\}$. Then the map $\varphi:\R\times\H^{n-1}\to I^+(x)$ given by $\varphi(t,z)=e^t(x+z)$ is a conformal diffeomorphism.$\qedhere$
\end{proof}

From this identification, one can check using Liouville Theorem that every diamond is homogeneous, with conformal group
$$\Conf(\Diams^{1,n-1})\simeq \Iso(\R)\times\Iso(\H^{n-1})\simeq(\Z_2\ltimes\R)\times \PO(1,n-1).$$
In particular, the domain $\Diams^{1,n-1}$ is divisible and admits quotients of the form $\S^1\times \Sigma$, where $\Sigma$ is a compact Riemannian manifold of constant negative curvature.

\subsubsection{Diamonds and other homogeneous domains of $\Ein^{p,q}$. }\label{sect_higher_diamonds} We first introduce models for constant curvature geometry in higher signature. We let $V$ be a finite-dimensional real vector space and $\b$ be a quadratic form of signature $(p,q)$ on $V$. The sheet $\H^{p-1,q}\subset V$ is defined by 
$$\H^{p-1,q}=\left\{x\in V\,\vert\,\b(x,x)=-1\right\}.$$
The metric $\b$ restricts to a complete pseudo-Riemannian metric of signature $(p-\nolinebreak 1,q)$ and of constant negative curvature on $\H^{p-1,q}$. The two connected components of $\H^{0,n}$ are standard models for the real $n$-dimensional hyperbolic space, and we denote them simply by $\H^n$. The space $\H^{1,n}$ is usually referred to as the \emph{anti de Sitter space} in Lorentzian geometry. Similarly we define 
$$\dS^{p,q-1}=\left\{x\in V\,\vert\,\b(x,x)=+1\right\},$$
so that the metric $\b$ restricts to a complete pseudo-Riemannian metric of signature $(p,q-1)$ and of constant positive curvature on $\dS^{p,q-1}$. The space $\dS^{0,n}$ is a model for the round sphere, we will denote it by $\S^n$. The space $\dS^{1,n}$ is called the \emph{de Sitter space} in Lorentzian geometry. We will use the notation $-\H^n$ to denote $\dS^{n,0}$.

Let $\mathbf{R}^{p+1,q+1}=V_\pp \oplus V_\mm$ be an orthogonal decomposition. For $i \in \{ \pp, \mm\}$, we write $(p_i,q_i)$ for the signature of $V_i$ and we assume for instance that $p_\pp \leq p_\mm$. We denote by $F_i$ the intersection $F_i=\Ein^{p,q}\cap \mathbf{P}(V_i)$ (this intersection might be empty). Let~$J\subset \Ein^{p,q}$ be the \emph{joint} of $F_\pp$ and $F_\mm$, that is $J$ is the union of all photons intersecting  $F_\pp$ and~$F_\mm$. The domain $U=\Ein^{p,q}\smallsetminus J$ is the union of homogeneous connected components. More precisely we have the following description of $U$.

\begin{prop}
    \label{ouverts_homogènes_Einpq}
    One has the following 4 possible cases:
    \begin{itemize} 
        \item[*] If $p_i=0$ or $q_i=0$ for some $i\in\{\pp, \mm\}$, then the domain $U$ is connected, homogeneous (in fact symmetric) and dense in $\Ein^{p,q}$.
        \item[*] If $p_\pp=p_\mm=q_\pp=q_\mm=1$, then the domain $U$ has 4 connected components, all of which are Lorentzian diamonds.
        \item[*] If $p_\pp q_\mm=1$ and $p_\mm q_\pp\geq 2$, then the domain $U$ has 3 connected components, all of which are symmetric and two of which are proper and isomorphic to each other. The same conclusion holds if $p_\pp q_\mm\geq 2$ and $p_\mm q_\pp=1$.
        \item[*] If $p_\pp q_\mm\geq 2$ and $p_\mm q_\pp\geq 2$, then the domain $U$ has 2 connected components, which are both symmetric and nonproper.
    \end{itemize}
    
\end{prop}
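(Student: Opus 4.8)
Plan of proof. The strategy is to stratify $\Ein^{p,q}$ by the sign of $\b$ along the $V_\pp$-factor. Write a point of $\Ein^{p,q}$ as $\P(v)$ with $v=v_\pp+v_\mm$, $v_i\in V_i$; isotropy of $v$ reads $\b(v_\pp,v_\pp)=-\b(v_\mm,v_\mm)$, so $\varepsilon(\P(v)):=\operatorname{sign}\b(v_\pp,v_\pp)\in\{+,0,-\}$ is well defined, and I set $\Omega^{+}=\{\varepsilon>0\}$ and $\Omega^{-}=\{\varepsilon<0\}$, two open subsets. The zero set $\{\b(v_\pp,v_\pp)=0\}$ is exactly the joint $J$: a point with $\b(v_\pp,v_\pp)=0$ has both $v_\pp$ and $v_\mm$ isotropic, and conversely $\P(aw_\pp+bw_\mm)$ with $w_i\in V_i$ isotropic sweeps out precisely the photons meeting $F_\pp$ (at $\P(w_\pp)$) and $F_\mm$ (at $\P(w_\mm)$); when one of the $F_i$ is empty this set reduces to $F_\pp$, $F_\mm$ or $\emptyset$, which only simplifies the statement. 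Thus $U=\Ein^{p,q}\smallsetminus J=\Omega^{+}\sqcup\Omega^{-}$; since $J$ is closed of positive codimension (or empty) this already gives that $U$ is dense, and perturbing an isotropic representative into $\Omega^{+}$ shows $J\subseteq\overline{\Omega^{+}}$ whenever $\Omega^{+}\neq\emptyset$ (and symmetrically for $\Omega^{-}$).

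Next I would pin down each piece. From $p_\pp+p_\mm=p+1\ge 2$ and $p_\pp\le p_\mm$ one gets $p_\mm\ge 1$, hence $\Omega^{+}\ne\emptyset\iff q_\pp\ge 1$ and $\Omega^{-}\ne\emptyset\iff p_\pp\ge 1$ and $q_\mm\ge 1$. Normalizing $\b(v_\pp,v_\pp)=1$ on $\Omega^{+}$ and expanding $\b(dv,dv)$ along the parametrization $(t,\hat v_\pp,\hat v_\mm)\mapsto\sqrt t\,(\hat v_\pp+\hat v_\mm)$ of the isotropic cone — all cross terms vanish since $V_\pp\perp V_\mm$ and $\hat v_\pp,\hat v_\mm$ range over pseudo-spheres — identifies $\Omega^{+}$ conformally with $(\dS^{p_\pp,q_\pp-1}\times\H^{p_\mm-1,q_\mm})/\{\pm 1\}$, the product metric, with $-\mathrm{id}$ acting diagonally; likewise $\Omega^{-}\cong(\H^{p_\pp-1,q_\pp}\times\dS^{p_\mm,q_\mm-1})/\{\pm 1\}$. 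By Witt's theorem each $\Omega^{\pm}$ is a single orbit of $G=\PO(V_\pp)\times\PO(V_\mm)$, so every connected component is homogeneous, and symmetric, being a quotient of a product of pseudo-Riemannian symmetric spaces by a central isometric involution. Counting components uses only that $\H^{a,b}$ is connected for $a\ge 1$ and has two components for $a=0$, that $\dS^{a,b}$ is connected for $b\ge 1$ and has two for $b=0$, and that $(X\times Y)/\{\pm\}$ is connected unless both $X,Y$ are disconnected: hence $\Omega^{+}$ has two components iff $p_\mm=q_\pp=1$ (connected otherwise) and $\Omega^{-}$ has two components iff $p_\pp=q_\mm=1$ (connected otherwise).

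The crux is the properness dichotomy. If $\Omega^{+}$ is nonempty and connected and all of $p_\pp,q_\pp,p_\mm,q_\mm$ are $\ge 1$, I claim $C(x)\cap\overline{\Omega^{+}}\ne\emptyset$ for every $x$: writing $x=\P(u_\pp+u_\mm)$, it suffices to find isotropic $w_i\in V_i$, not both zero, with $\b(w_\pp,u_\pp)+\b(w_\mm,u_\mm)=0$, since then $\P(w_\pp+w_\mm)\in J\cap C(x)\subseteq\overline{\Omega^{+}}\cap C(x)$. If $u_\pp,u_\mm\ne 0$, the restriction of the form $\b(\,\cdot\,,u_i)$ to the spanning isotropic cone of $V_i$ is onto $\R$ (this is where $p_i,q_i\ge 1$ is used), so one prescribes $\b(w_\pp,u_\pp)=-1$ and $\b(w_\mm,u_\mm)=1$; if $u_\mm=0$ (so $x\in F_\pp$) take $w_\pp=u_\pp$ and any isotropic $w_\mm\ne 0$, and symmetrically if $u_\pp=0$. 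The same holds for a connected $\Omega^{-}$, and in the remaining "$p_i=0$ or $q_i=0$ for some $i$" case one of $V_\pp,V_\mm$ is definite, so $\overline{\Omega^{\pm}}=\Ein^{p,q}$ and the unique nonempty piece is nonproper for free. Conversely, when $\Omega^{-}$ has two components (the case $p_\pp=q_\mm=1$), the negative part of $V_\pp$ is a line $\R e$, $\b(e,e)=-1$, and the positive part of $V_\mm$ is a line $\R f$, $\b(f,f)=1$; writing $v_\pp=(x_e,x')$, $v_\mm=(y',y_f)$ in adapted coordinates, each $\P(w)$ in a component $\overline D$ has a representative with $x_e\ge|x'|\ge 0$ and $y_f\ge|y'|\ge 0$ (these are forced by $\b(v_\pp,v_\pp)\le 0\le\b(v_\mm,v_\mm)$ together with the sign of the $e$- and $f$-coordinates that singles out $D$). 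Then $x:=\P(e-f)$ is $\b$-isotropic and $\b(w,e-f)=-x_e-y_f<0$ on all such representatives, vanishing only at $w=0$; hence $\overline D\cap C(x)=\emptyset$ and $D$ is proper. The same argument, with $V_\pp$ and $V_\mm$ interchanged, handles the two components of $\Omega^{+}$ when $p_\mm=q_\pp=1$.

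Assembling: if $p_\pp q_\mm\ge 2$ and $p_\mm q_\pp\ge 2$, both $\Omega^{\pm}$ are nonempty, connected and nonproper — two symmetric nonproper components; if $p_\pp q_\mm=1$ and $p_\mm q_\pp\ge 2$ (or the symmetric condition) then $\Omega^{-}$ (resp. $\Omega^{+}$) splits into two proper symmetric components, exchanged by the element of $G$ that is $-\mathrm{id}$ on $V_\pp$ and $\mathrm{id}$ on $V_\mm$ (hence isomorphic), while the other orbit is one connected nonproper symmetric piece — three in all; if $p_\pp=p_\mm=q_\pp=q_\mm=1$ then both $p_\pp q_\mm$ and $p_\mm q_\pp$ equal $1$, so each $\Omega^{\pm}$ splits into two proper components and the four are conformal to $\R\times\R$ with a Lorentzian metric, i.e. to $\Diams^{1,1}\cong\R\times\H^1$; and in every remaining case $p_i=0$ or $q_i=0$ for some $i$, so exactly one of $\Omega^{\pm}$ is nonempty, connected, homogeneous, symmetric and dense. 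Finally, reading the models above, the proper components in the third and fourth case are conformal to $\H^{p}\times\H^{q}$ with the metric $-g_{\H^{p}}\oplus g_{\H^{q}}$, i.e. are diamonds. I expect the properness dichotomy to be the only genuinely delicate point — the nonproperness of a connected orbit relies on the surjectivity of $\b(\,\cdot\,,u_i)$ on the isotropic cone, and the properness of a diamond on choosing $x=\P(e-f)$ so that a single linear form has constant sign on $\overline D$ — while the rest is bookkeeping with signatures and $\pi_0$.
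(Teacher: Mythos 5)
Your proof is correct and takes essentially the same route as the paper: the same splitting of $U$ into the two pieces determined by the sign of $\b(v_\pp,v_\pp)$, the same identification with $(\H^{p_\pp-1,q_\pp}\times\dS^{p_\mm,q_\mm-1})/\{\pm 1\}$ and its mirror, the same $\pi_0$ count, and properness of the split components by exhibiting a lightcone (your $C(\P(e-f))$) disjoint from the closure. The only local difference is your nonproperness argument (every lightcone meets $J\subseteq\overline{\Omega^{\pm}}$), where the paper instead finds a complete lightlike geodesic, i.e.\ a photon minus a point, inside the connected piece — both are valid, and your slip ``third and fourth case'' (you mean the second and third) is inconsequential since the labeling of the proper components as diamonds only occurs after the proposition.
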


\begin{proof}
Assume first that $p_i, q_i \geq 1$ for $i \in \{\pp, \mm\}$. One can write explicitly 
$$F_{i} = \mathbf{P}\left\{v_{i} \in V_{i} \, \vert \, v_{i} \neq 0, \ \b(v_i, v_{i}) = 0 \right\},$$
such that 
$J = \mathbf{P}\left\{v_\pp + v_\mm \in V_\pp \oplus V_\mm \, \vert \, \b(v_{i}, v_i) = 0 \text{ for } i \in \{\pp, \mm\}\right\}$.
Therefore, we can express $U$ as the union $U = U_{\pp} \sqcup U_{\mm}$, where 
$$U_{i} = \mathbf{P}\left\{v_\pp + v_\mm \in V_\pp \oplus V_\mm \, \vert \, -\b( v_\pp, v_\pp) = \b(v_\mm, v_\mm) = (-1)^i \right\}.$$ 
Now the map 
$$
\begin{array}{cccl}
         \pi: & \H^{p_\pp-1,q_\pp} \times \dS^{p_\mm,q_\mm-1} & \longrightarrow & U_\pp \\
         & (v_\pp, v_\mm) & \longmapsto & \mathbf{P}(v_\pp + v_\mm).
\end{array}
$$
defines a conformal 2-sheeted covering. The nontrivial deck transformation $\varphi$ centralizes $\PO(p+1, q+1)$, so $U_\pp \simeq \H^{p_\pp-1,q_\pp} \times \dS^{p_\mm,q_\mm-1}/_{x \sim \varphi(x)}$ is symmetric with isometry group $\PO(p_\pp-1, q_\pp) \times \PO(p_\mm, q_\mm-1)$. If $p_\pp \neq 1$ or $q_\mm \neq 1$, then at least one of the factors $\H^{p_\pp-1,q_\pp}$ or $\dS^{p_\mm,q_\mm-1}$ contains a lightlike geodesic $\gamma$ defined over $\R$. To find such a geodesic, intersect any degenerate plane of signature $(0,-)$ or $(0,+)$ with $\H^{p_\pp-1,q_\pp}$ or $\dS^{p_\mm,q_\mm -1}$, respectively. In particular, the domain $U_\pp$ contains a photon minus a point. Hence $U_\pp$ is neither Markowitz-hyperbolic nor proper. If $p_\pp = q_\mm = 1$, the total space $\H^{0,q_\pp} \times \dS^{p_\mm,0}$ has 4 connected components and $U_{\pp}$ is the union of two connected components both conformal to $(-\H^{q_\pp}) \times \H^{p_\mm}$. In order to write the components of $U_{\pp}$ explicitly, let $e_\pp\in \H^{0,q_\pp}$ and $e_\mm\in\dS^{p_\mm,0}$. Then 
$$U_{\pp}=D_\pp\sqcup D_\mm ,$$ 
where 
$$D_{i}=\P\{v_\pp+v_\mm\in U_\pp\,\vert\,(-1)^i\b(v_\pp,e_\pp)\b(v_\mm,e_\mm)>0\}.$$
For $x=\P(v_\pp  +v_\mm )\in D_\pp$ and $y=\P(w_\pp +w_\mm)\in D_\mm$, the signs of $\b(v_\pp, w_\pp)$ and $\b(v_\mm,w_\mm)$ are the same. In particular the value of $\b(u,w)$ cannot be zero. This means that the lightcone of every element of $D_\pp$ (resp.\ $D_\mm$) does not intersect $D_\mm$ (resp.\ $D_\pp$). In particular these components are proper. Similarly, $U_\mm$ is symmetric and has one or two connected components depending on the values of $p_\mm$ and $q_\pp$. In the case where (for instance) $p_\pp = 0$, the subset $F_\pp$ is empty, and $J = F_\mm$ has an empty interior in~$\Ein^{p,q}$. Thus $U$ is dense in $\Ein^{p,q}$. A map similar to $\pi$ shows that $U$ is connected and symmetric.$\qedhere$
\end{proof}

The proper components mentioned in Proposition~\ref{ouverts_homogènes_Einpq} are called \emph{diamonds}, we will denote them by $\Diams^{p,q}$. In the proof of Proposition~\ref{ouverts_homogènes_Einpq}, one gets the following conformal identification
$$\Diams^{p,q} \simeq (\H^p \times \H^q,[-g_{\H^p} \oplus g_{\H^q}]).$$
We now describe a concrete way to obtain a diamond as a bounded domain of $\R^{p,q}$. Let~$H_p$ be a negative definite affine $p$-plane in $\R^{p,q}$, and let $H_q$ be a positive definite affine $q$-plane orthogonal to $H_p$. Let $c$ be the intersection of $H_p$ with $H_q$, and let $r>0$. The flat metric restricts to a negative definite product on $H_p$ and a positive definite product on $H_q$. We write $S_{p-1}$ and $S_{q-1}$ for the balls of center $c$ and radius $r$ in $H_p$ and~$H_q$, respectively. Given a point $a \in S^{p-1}$ and $b \in S^{q-1}$, then the affine segment~$[a,b]$ is lightlike. Then the union $\mathcal{S}$ of all such segments is a topological sphere (it is a topological joint of $S_{p-1}$ and $S_{q-1}$). It separates $\R^{p,q}$ into two connected components, one of which is bounded and convex. This component is a diamond. Write $D^{p,q}$ for this component. We define for $x \in \R^{p,q}$ 
$$\Vert x \Vert_{p,q} = \sqrt{-\b(x_p,x_p)} + \sqrt{\b(x_q,x_q)},$$
where $x = x_p + x_q \in \overset{\rightarrow}{H_p} \oplus \overset{\rightarrow}{H_q}$. Then $\Vert \cdot \Vert_{p,q}$ defines a norm on $\R^{p,q}$ which depends on~$H_p$ and $H_q$, and we can describe $D^{p,q}$ as a ball for this norm: 
$$D^{p,q} = \{x \in \R^{p,q} \, \vert \, \Vert x-c \Vert_{p,q} < r\}.$$
In particular, the domain $D^{p,q}$ is a convex domain of $\R^{p,q}$. See Figure \ref{Diamant_et_D^pq}.
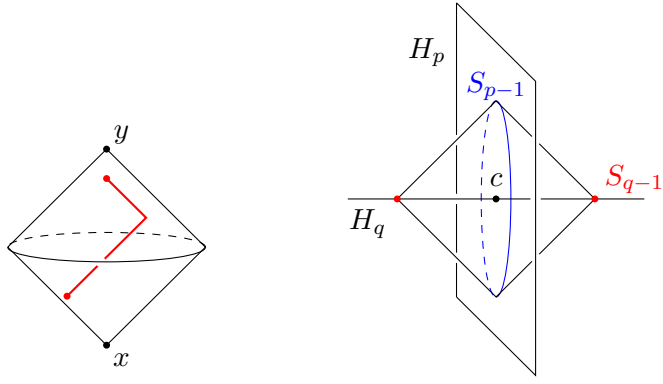
\begin{figure}
    \centering
    $$\begin{array}{cccccc}
        \begin{tikzpicture}[scale=1.3]
        \draw[dashed] (1,0) arc (0:180:1 and 0.15);
        \draw[red,thick] (-0.4,-0.5) -- (0.4,0.3) -- (0,0.7);
          \fill[fill=white] (-0.06,-0.15) circle (1.5pt);
          \draw (-1,0) -- (0,-1) -- (1,0) -- (0,1) -- (-1,0);
          \draw (-1,0) arc (180:360:1 and 0.15);
          \fill[fill=black] (0,1) circle (1pt);
          \fill[fill=black] (0,-1) circle (1pt);
          \fill[fill=red] (-0.4,-0.5) circle (1pt);
          \fill[fill=red] (0,0.7) circle (1pt);
          \draw node at (0.15,1.15) {$y$};
          \draw node at (0.15,-1.15) {$x$};
        \end{tikzpicture}
        & & & & &
    \begin{tikzpicture}[scale=1.3]
        \draw (-0.4,+1.6+0.4) -- (-0.4,-1.4+0.4);
        \fill[fill=white] (-0.4,+0.6) circle (1.5pt);
        \fill[fill=white] (-0.4,-0.6) circle (1.5pt);
        \fill[fill=white] (-0.4,0) circle (1.5pt);
        \begin{scope}[rotate = 90,blue]
          \draw (-1,0) arc (180:360:1 and 0.15);
        \end{scope}
        \begin{scope}[rotate = 90,blue]
          \draw[dashed] (1,0) arc (0:180:1 and 0.15);
        \end{scope}
        \draw[blue] node at (0,1.15) {$S_{p-1}$};
        \draw (-1,0) -- (0,-1) -- (1,0) -- (0,1) -- (-1,0);
        \draw (-1.5,0) -- (1.5,0);
          \fill[fill=white] (0.4,0) circle (1.5pt);
        
          \fill[fill=white] (0.4,+0.6) circle (1.5pt);
          \fill[fill=white] (0.4,-0.6) circle (1.5pt);
          \draw (-0.4,-1.4+0.4) -- (+0.4,-1.4-0.4) -- (+0.4,+1.6-0.4) -- (-0.4,+1.6+0.4);

          \fill[red] (1,0) circle (1pt);
          \fill[red] (-1,0) circle (1pt);
          \draw[red] node at (1.4,0.2) {$S_{q-1}$};

          \draw node at (-1.3,-0.25) {$H_q$};
          \draw node at (-0.7,1.5) {$H_p$};
          
          \fill[fill=black] (0,0) circle (1pt);
          \draw node at (0,0.2) {$c$};
        \end{tikzpicture}
    
    \end{array}$$
    \caption{On the left: the Lorentzian diamond in $\R^{1,2}$. Two points in the diamond are joined by a sequence of two segments of photons (see Fact \ref{pathsindiamonds}). On the right: a diamond in $\R^{p,q}$. The spheres $S_{p-1}$ and $S_{q-1}$ correspond to the subsets $F_\pp$ and $F_\mm$ in Proposition~\ref{ouverts_homogènes_Einpq}. The boundary $\mathcal{S}$ of $D^{p,q}$ is a subset of the set $J$ in Proposition~\ref{ouverts_homogènes_Einpq}.}
    \label{Diamant_et_D^pq}
\end{figure}

\begin{rmk}
    There is a purely causal way to define $D^{p,q}$. In signature $(p,q)$, one can define the future of an inextensible $(p-1)$-timelike curve (see \cite{Romeo}). In that setting, the diamond is the future of the timelike sphere $S^{p-1}$.
\end{rmk}

We will use the following fact later:
\begin{fact} \label{pathsindiamonds}
    Any two points $x,y \in \Diams^{p,q}$ can be joined by a sequence of two segments of photons contained in $\Diams^{p,q}$. See Figure \ref{Diamant_et_D^pq}.
\end{fact}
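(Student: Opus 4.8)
The plan is to work entirely inside the model $\Diams^{p,q}\simeq(\H^p\times\H^q,[-g_{\H^p}\oplus g_{\H^q}])$ produced above; the advantage is that here the whole domain \emph{is} the product $\H^p\times\H^q$, so containment in $\Diams^{p,q}$ will come for free.

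First I would identify which pairs of points are joined by a segment of a photon. Since unparametrized lightlike geodesics are conformally invariant, the photons of $\Ein^{p,q}$, intersected with $\Diams^{p,q}$, are exactly the lightlike geodesics of the metric $-g_{\H^p}\oplus g_{\H^q}$. The Levi-Civita connection of a product metric being the product connection, such a lightlike geodesic is precisely a curve $s\mapsto(\gamma_1(s),\gamma_2(s))$ where $\gamma_1$ (resp.\ $\gamma_2$) is an affinely parametrized geodesic of $\H^p$ (resp.\ $\H^q$) and $|\dot\gamma_1|_{g_{\H^p}}=|\dot\gamma_2|_{g_{\H^q}}$ is constant. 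Running two \emph{minimizing} geodesic segments in parallel, this yields the criterion I need: two points $(u_1,u_2)$ and $(v_1,v_2)$ of $\H^p\times\H^q$ are joined by a segment of a photon contained in $\Diams^{p,q}$ whenever $d_{\H^p}(u_1,v_1)=d_{\H^q}(u_2,v_2)$ (the two minimizing segments stay in $\H^p\times\H^q$, hence so does their product curve).

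Then, given $x=(x_1,x_2)$ and $y=(y_1,y_2)$, I would set $a=d_{\H^p}(x_1,y_1)$, $b=d_{\H^q}(x_2,y_2)$, and assume $a\le b$ (the case $a\ge b$ being symmetric, exchanging the two factors). Put $s=(a+b)/2$, so $a\le s\le b$. Let $z_2$ be the point of the minimizing geodesic segment $[x_2,y_2]\subset\H^q$ at distance $s$ from $x_2$, and let $z_1$ be the point at distance $s$ from $x_1$ on the geodesic ray of $\H^p$ issued from $x_1$ through $y_1$ (so $z_1$ lies at or beyond $y_1$, since $s\ge a$). With $z=(z_1,z_2)$ one gets $d_{\H^p}(x_1,z_1)=s=d_{\H^q}(x_2,z_2)$ and $d_{\H^p}(z_1,y_1)=s-a=b-s=d_{\H^q}(z_2,y_2)$, so by the criterion there is a segment of a photon from $x$ to $z$ and one from $z$ to $y$, both contained in $\Diams^{p,q}$; their concatenation is the desired path.

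The argument is elementary and I expect no real obstacle; the only point requiring a little care is the low-dimensional case where $\H^p$ or $\H^q$ is a line, where the naive choice ``$z_1$ equidistant from $x_1$ and $y_1$'' fails to exist. The purpose of choosing $s=(a+b)/2$ is exactly to make $z_2$ land on the segment $[x_2,y_2]$ and $z_1$ land on the ray through $y_1$ — degenerate configurations realizable in any $\H^m$ with $m\ge 1$ — which makes the construction uniform for all $p,q\ge 1$. One should also just record that each $\H^m$ is complete and uniquely geodesic, so ``the minimizing segment'' and ``the geodesic ray'' are unambiguous. (Alternatively one could argue in the convex model $D^{p,q}\subset\R^{p,q}$, where a segment of a photon is an affine segment whose direction $v=v_p+v_q$ satisfies $|v_p|=|v_q|$ and containment follows from convexity of $D^{p,q}$; but the constraint of staying inside the ball makes the bookkeeping heavier, so I would avoid it.)
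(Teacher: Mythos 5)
Your proof is correct and follows essentially the same route as the paper's: both work in the model $\H^p\times\H^q$ with the conformal class $[-g_{\H^p}\oplus g_{\H^q}]$, identify photon segments with pairs of equal-length geodesic segments run in parallel, and produce the intermediate point $z$ by moving along $[x_2,y_2]$ in the factor realizing the larger distance while extending a geodesic in the other factor. The only difference is cosmetic (you place $z_1$ beyond $y_1$ at distance $(a+b)/2$ from $x_1$, whereas the paper extends behind $x_p$ by $(b-a)/2$), and your version has the small merit of making the length bookkeeping fully explicit.
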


\begin{proof}
    Write $x=(x_p,x_q)$ and $y=(y_p,y_q)$ in the model $\H^p\times\H^q$. We let $d$ refer to the hyperbolic distance function. Assume for instance that $d(x_p,y_p)<d(x_q,y_q)$. Let~$z_p\in \H^p$ such that $d(x_p,z_p)+ d(x_p,y_p) = d(z_p,y_p)$ and let $z_q\in [x_q,y_q]\subset\H^q$ such that $d(x_q,z_q)=d(x_p,z_p)$. Then the lightcone of $z=(z_p,z_q)$ contains both $x$ and $y$.$\qedhere$
\end{proof}

\section{The Markowitz distance function}\label{kobayashimetric}

In this section we define the Markowitz pseudodistance, and investigate its properties. This pseudodistance makes sense for an arbitrary conformal manifold, but we will only define it for domains of $\Ein^{p,q}$. See \cite{markowitz_1981} for the definition and study of that pseudodistance in the general case, and see \cite{dist_mark} for a recent overview on that subject.

\subsection{Definition of the pseudodistance and Markowitz hyperbolicity} Let $\O$ be a domain of $\Ein^{p,q}$ (not necessarily proper). Let $I=(-1,1)$ be the one-dimensional hyperbolic space endowed with its projectively invariant hyperbolic metric~$g_{hyp}=\frac{4dx^2}{(1-x^2)^2}$. We write $d_{hyp}$ for the associated hyperbolic distance on $I$. Let $\Proj(I, \O) \subset \Proj(I,\mathbf{RP}^{p+q+1})$ be the set of projective immersions $\alpha:I\to \mathbf{RP}^{p+q+1}$ whose image lies entirely in $\O$. The image of a curve $\alpha\in \Proj(I, \O)$ is a conformal lightlike geodesic. In fact the curve $\alpha$ is a projectively parametrized lightlike geodesic in the sense of \cite{markowitz_1981} and every projectively parametrized lightlike geodesic is (locally) equal to an element of $\Proj(I, \O)$. This allows us to define the Markowitz pseudodistance in the following way.
Let $x,y \in \O$. A \emph{chain from $x$ to $y$ (of index $N\in\N$)} is a sequence of points $x_0=x, x_1,\dots, x_N=y\in \O $  together with sequences $s_1,\dots,s_{N},t_1,\dots,t_{N}\in I$ and projectively parametrized lightlike geodesics $\alpha_i:I\to\O$ such that $\alpha_i(s_i)=x_{i-1}$ and $\alpha_i(t_i)=x_{i}$. We will write it $((x_0, \cdots, x_{N}), (\alpha_i)_{1 \leq i \leq N-1})$. Then we define 
$$\delta^{\O}(x,y)=\inf\sum_{i} d_{hyp} (s_i , t_i),$$
where the infinimum is taken over all chains from $x$ to $y$. This definition coincides with the definition of Markowitz for pseudo-Riemannian manifolds (see \cite{markowitz_1981}). 

Since a conformal map $f$ sends a chain from $x$ to $y$ to a chain from $f(x)$ to $f(y)$, we have the following standard and important naturality properties.

\begin{prop} 
    \begin{enumerate}\label{propdebasedmark}
    \item Given two domains $\O_1, \O_2$ of $\Ein^{p,q}$ such that $\O_1 \subset \O_2$, one has: $\delta^{\O_2}(x,y) \leq \delta^{\O_1}(x,y)$ for all $x,y \in \O_1$.
    \item For every domain $\O\subset\Ein^{p,q}$, the pseudodistance $\delta^{\O}$ is $\Conf(\O)$-invariant.
\end{enumerate}
\end{prop}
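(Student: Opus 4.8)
The plan is to prove both assertions straight from the definition of $\delta^{\O}$, by checking that the family of chains and the quantity $\sum_i d_{hyp}(s_i,t_i)$ transform in the expected way under inclusions and conformal maps. For $(1)$, the key observation is that $\Proj(I,\O_1)\subset\Proj(I,\O_2)$ whenever $\O_1\subset\O_2$: a projective immersion $\alpha\colon I\to\mathbf{RP}^{p+q+1}$ whose image lies in $\O_1$ has, a fortiori, image in $\O_2$. Consequently, if $((x_0,\dots,x_N),(\alpha_i))$ is a chain from $x$ to $y$ inside $\O_1$ with hyperbolic parameters $(s_i,t_i)$, then the very same data is a chain from $x$ to $y$ inside $\O_2$ and has the same value $\sum_i d_{hyp}(s_i,t_i)$. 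Since the infimum defining $\delta^{\O_2}(x,y)$ ranges over a family of chains containing all chains used to define $\delta^{\O_1}(x,y)$, we obtain $\delta^{\O_2}(x,y)\leq\delta^{\O_1}(x,y)$. (If $\O_1$ admits no chain from $x$ to $y$, the right-hand side is $+\infty$ and there is nothing to prove.)

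For $(2)$, fix $f\in\Conf(\O)$. Because $f$ preserves the conformal class, it sends lightlike geodesics to lightlike geodesics and, by the conformal invariance of projective parameters recalled above, it carries the projective-parameter class of each lightlike geodesic onto that of its image; equivalently, when $p+q\geq 3$ one may invoke Liouville's theorem to realize $f$ as a projective transformation of $\mathbf{RP}^{p+q+1}$ sending photons to photons. Either way, postcomposition with $f$ maps $\Proj(I,\O)$ bijectively onto itself (with inverse given by postcomposition with $f^{-1}$). Hence, given a chain $((x_0,\dots,x_N),(\alpha_i))$ from $x$ to $y$ with parameters $(s_i,t_i)$, the data $((f(x_0),\dots,f(x_N)),(f\circ\alpha_i))$ is a chain from $f(x)$ to $f(y)$ realizing the \emph{same} parameters $(s_i,t_i)$, hence of the same $d_{hyp}$-length. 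Taking the infimum over all chains from $x$ to $y$ gives $\delta^{\O}(f(x),f(y))\leq\delta^{\O}(x,y)$, and applying the same inequality to $f^{-1}\in\Conf(\O)$ yields the reverse inequality; therefore $\delta^{\O}$ is $\Conf(\O)$-invariant.

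I expect the only nontrivial point — and it is a mild one — to be the verification that a conformal self-map of $\O$ maps a projectively parametrized lightlike geodesic to a projectively parametrized lightlike geodesic \emph{with the same parameter}. This is exactly the content of the theorem on projective parameters recalled above: the canonical class $\{u(t)\}$ of projective parameters depends only on the conformal class, so a conformal map intertwines the projective structure of a lightlike geodesic with that of its image. Combined with the standard identification of a projective parametrization of a photon $\P(\Pi)$ with a projective immersion $I\to\mathbf{RP}^{p+q+1}$ onto $\P(\Pi)$, this shows $f\circ\alpha\in\Proj(I,\O)$; all remaining steps are a matter of unwinding the definition of a chain and using that $d_{hyp}$ is unaffected when the parameters are not altered.
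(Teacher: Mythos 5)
Your proof is correct and follows exactly the argument the paper itself gives (in one line before the statement): chains in $\O_1$ are chains in $\O_2$ with the same hyperbolic parameters, and a conformal map of $\O$ sends chains to chains preserving the parameters, the latter justified either by the conformal invariance of projective parameters or by Liouville's theorem. Nothing further is needed.
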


Note that $\delta^{\O_2}$ and $\delta^{\O_1}$ and $\delta^{\O}$ do not need to be distance functions in Proposition~\ref{propdebasedmark}.
Given a domain $\O$ and points $x,y \in \O$, we write $\mathcal{C}_{x,y}$ (resp.\ $\mathcal{C}^N_{x,y}$) for the collection of all chains from $x$ to $y$ (resp.\ chains of index lower or equal to $N$). For $x,y$ sufficiently close to each other, we can find a diamond included in $\O$ and containing $x$ and $y$. In that case $\mathcal{C}_{x,y}$ is nonempty (and even $\mathcal{C}_{x,y}^2$ is nonempty by Fact~\ref{pathsindiamonds}). Since the relation ``$x$ and $y$ can be joined by a chain'' is an equivalence relation, the set $\mathcal{C}_{x,y}$ is never empty. Therefore $\delta^{\O}(x,y)$ is always finite. 

Because one can concatenate and reverse the orientation of a chain, the \linebreak map $\delta^{\O}:\O\times\O\to\R$ is symmetric and satisfies the triangular inequality. Therefore $\delta^{\O}$ is always a pseudodistance. A domain $\O$ is said to be \emph{Markowitz-hyperbolic} if~$\delta^\O$ is a distance function, that is, if $\delta^\O$ separates points. 

\begin{ex}
\label{distance_du_diams}
    Diamonds are Markowitz-hyperbolic domains. In the model $\Diams^{p,q}=(-\H^p)\times\H^q$, one has $$\delta^{\Diams^{p,q}}((x_p,x_q),(y_p,y_q))=\max\{d(x_p,y_p),d(x_q,y_q)\},$$ 
    where $d(x,y)$ is the hyperbolic distance between $x$ and $y$. Write $g$ for the Riemannian metric $g_{\H^p}\oplus g_{\H^q}$. We use of the Markowitz \emph{infinitesimal conformal pseudometric} $F_\Mark$ (see \cite{markowitz_1981}). For a lightlike vector $X\in T_x\Diams^{p,q}$, Markowitz defines 
    $F_\Mark(X)=\inf_\alpha \vert u\vert$, where the infinimum is taken over all projectively parametrized photons $\alpha:I\to\Diams^{p,q}$ and $u\in T_0I$ such that  $\alpha_*(u)=X$.
    Let $\mathbf{L}{\Diams^{p,q}}$ be the bundle of lightlike directions of ${\Diams^{p,q}}$. Since ${\Diams^{p,q}}$ is homogeneous and the stabilizer $H_x$ of a point $x\in{\Diams^{p,q}}$ sends any lightlike line in $T_x\D^{p,q}$ to any other lightlike line in $T_x\D^{p,q}$, the space $\mathbf{L}\Diams^{p,q}$ is $\Conf({\Diams^{p,q}})$-homogeneous. Let $X_0\in T{\Diams^{p,q}}$ be a nonzero lightlike vector and let $\alpha\geq 0$ such that $F_\Mark(X_0)=\alpha\sqrt{g(X_0,X_0)}$. Since $F_\Mark$ is a conformally invariant functional, the same equality holds for any vector in the $\Conf(\D^{p,q})$-orbit of $X_0$. Because $F_\Mark$ satisfies $F_\Mark(\lambda X)=\lambda F_\Mark(X)$ for every $\lambda>0$ and because $\mathbf{L}\Diams^{p,q}$ is homogeneous, we get that for every lightlike vector $X\in T\D^{p,q}$
    $$F_\Mark(X)=\alpha\sqrt{g(X,X)}.$$
    Since $\R^{p,q}$ is Ricci flat, every affine parameter for geodesics of $\R^{p,q}$ is a projective parameter in the sense of \cite{markowitz_1981}. A direct computation, using any maximal segment $\ell\subset \D^{p,q}\subset \R^{p,q}$, shows that $\alpha=1/\sqrt{2}$. From Theorem 4.8 of \cite{markowitz_1981}, we can conclude that for $x,y\in\H^p\times \H^q$ $$\delta^{\Diams^{p,q}}(x,y)=\frac{1}{\sqrt{2}}\inf_\gamma L(\gamma,g),$$
    where the infinimum is taken over every piecewise Riemannian geodesics $\gamma=(\gamma_p,\gamma_q)$ of $\H^p\times \H^q$ from $x$ to $y$ such that $\gamma_p$ and $\gamma_q$ are parametrized by arc length. The result follows.
\end{ex}

Once equipped with the distance function $\delta^{\Diams^{p,q}}$, the diamond $\Diams^{p,q}$ is a homogeneous locally compact metric space and is therefore a proper and geodesic metric space (see \cite[Prop.\ 3.7]{bridson2013metric}). The topology induced by $\delta^{\Diams^{p,q}}$ is the standard one. In fact for a general proper dually convex domain $\O$, the pseudodistance $\delta^\O$ will be a proper and geodesic distance function defining the standard topology of $\O$.

\begin{prop}\label{prop_mark_hyp}
    Proper domains of $\Ein^{p,q}$ are Markowitz-hyperbolic.
\end{prop}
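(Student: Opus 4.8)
The plan is to reduce the general case to the already-understood case of the diamond, using the monotonicity property of the Markowitz pseudodistance (Proposition \ref{propdebasedmark}(1)) together with the fact that every proper domain is, locally, trapped between two diamonds. More precisely, let $\O \subset \Ein^{p,q}$ be proper and fix an affine chart in which $\overline\O$ is a bounded subset of $\R^{p,q}$. To show $\delta^\O$ separates points, it suffices to show that for any two distinct $x,y \in \O$ one has $\delta^\O(x,y) > 0$.

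The key observation is that for a point $x \in \O$, since $\overline\O$ is bounded and avoids the lightcone $C(x_0)$ at infinity, one can find a diamond $D$ (a translate and scaling of the model $D^{p,q}$ inside $\R^{p,q}$) with $\O \subset D$: indeed, enlarge the norm ball $\{ \Vert \cdot - c\Vert_{p,q} < r\}$ until it contains the bounded set $\overline\O$. By Proposition \ref{propdebasedmark}(1), $\delta^{D}(x,y) \leq \delta^\O(x,y)$ for all $x,y\in\O$. Since $\delta^D = \delta^{\Diams^{p,q}}$ is an honest distance function (Example \ref{distance_du_diams}), distinct points of $\O$ have $\delta^D(x,y) > 0$, hence $\delta^\O(x,y) > 0$. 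This shows $\delta^\O$ separates points, i.e.\ $\O$ is Markowitz-hyperbolic.

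The one point that needs care — and which I expect to be the main (minor) obstacle — is verifying that every bounded subset of $\R^{p,q}$ is contained in some diamond $D^{p,q}$, equivalently that the norms $\Vert\cdot\Vert_{p,q}$ associated to suitable orthogonal splittings $\R^{p,q} = \overset{\rightarrow}{H_p}\oplus\overset{\rightarrow}{H_q}$ actually control the Euclidean (hence bounded) sets. This is immediate once one fixes one such splitting: $\Vert\cdot\Vert_{p,q}$ is a genuine norm on the finite-dimensional vector space $\R^{p,q}$, so it is equivalent to the Euclidean norm, and therefore any Euclidean-bounded set lies in a large enough $\Vert\cdot\Vert_{p,q}$-ball, which is precisely a diamond $D^{p,q}$ up to translating its center $c$. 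With that in hand the argument above is complete; alternatively one could invoke the general fact from \cite{markowitz_1981, Zimpropqh} that the Markowitz (or Caratheodory) pseudodistance of a domain sitting inside a Markowitz-hyperbolic domain is again a distance, but the direct comparison with a single enveloping diamond is cleaner here.
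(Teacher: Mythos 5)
Your proposal is correct and follows essentially the same route as the paper: embed the proper domain in a sufficiently large diamond (a $\Vert\cdot\Vert_{p,q}$-ball in the affine chart), use the monotonicity of the Markowitz pseudodistance from Proposition~\ref{propdebasedmark}, and conclude from the fact that $\delta^{\Diams^{p,q}}$ is a genuine distance (Example~\ref{distance_du_diams}). The only difference is that you spell out the norm-equivalence argument for containing a bounded set in a diamond, which the paper leaves implicit.
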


\begin{proof}
    By Proposition~\ref{propdebasedmark}, any open subset of a Markowitz-hyperbolic domain is itself hyperbolic. Since any proper domain of $\Ein^{p,q}$ is contained in a diamond, it is Markowitz-hyperbolic.$\qedhere$
\end{proof}

\begin{rmk}
    \label{Ouvert_MH_homogene}
    A Markowitz-hyperbolic domain needs not be proper. For instance, the Misner domain $M$ defined as the future of two transverse degenerate hyperplanes in $\mathbf{R}^{1,n-1}$ is an example of a Markowitz-hyperbolic homogeneous domain which is not proper, as $\partial M$ contains infinitely many photons.
\end{rmk}

\begin{prop}\label{distmark}
    Let $\O \subset \Ein^{p,q}$ be a proper domain.  Then $\delta^{\O}$ induces the standard topology on $\O$. 
\end{prop}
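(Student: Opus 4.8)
The plan is to compare the $\delta^\Omega$-topology with the standard manifold topology on $\Omega$ by squeezing $\delta^\Omega$ between the Markowitz distances of two families of diamonds: small diamonds sitting inside $\Omega$, and a single large diamond containing $\Omega$. The key input is Example~\ref{distance_du_diams}, where $\delta^{\Diams^{p,q}}$ is computed explicitly and seen to induce the standard topology on a diamond; the rest of the argument is formal, using only the monotonicity property Proposition~\ref{propdebasedmark}(1). Note also that $\delta^\Omega$ is an actual distance function by Proposition~\ref{prop_mark_hyp}, so the $\delta^\Omega$-topology makes sense.

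First I would show that the standard topology is finer than the $\delta^\Omega$-topology, i.e.\ that the identity map from $\Omega$ with the standard topology to $\Omega$ with the $\delta^\Omega$-topology is continuous. Fix $x_0 \in \Omega$. Since $\Omega$ is open, choosing an affine chart $\Ein^{p,q} \smallsetminus C(z) \simeq \R^{p,q}$ around $x_0$, a sufficiently small ball for a norm $\Vert \cdot \Vert_{p,q}$ centered at $x_0$ is a diamond $D_{x_0}$ with $x_0 \in D_{x_0} \subset \Omega$ (see Proposition~\ref{ouverts_homogènes_Einpq} and the discussion following it). By Proposition~\ref{propdebasedmark}(1), $\delta^\Omega(x,y) \le \delta^{D_{x_0}}(x,y)$ for all $x,y \in D_{x_0}$. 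Since $\delta^{D_{x_0}}$ induces the standard topology on $D_{x_0}$ (Example~\ref{distance_du_diams}), for every $\varepsilon>0$ there is a standard neighborhood $V$ of $x_0$ inside $D_{x_0}$ with $\delta^{D_{x_0}}(x,x_0)<\varepsilon$, hence $\delta^\Omega(x,x_0)<\varepsilon$, for all $x \in V$. This gives the desired continuity.

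For the reverse inclusion I would use properness. As recalled in the proof of Proposition~\ref{prop_mark_hyp}, a proper domain $\Omega$ is contained in some diamond $D$. Monotonicity then gives $\delta^{D}(x,y) \le \delta^\Omega(x,y)$ for all $x,y \in \Omega$, so the identity map $(\Omega,\delta^\Omega) \to (\Omega, \delta^{D}|_\Omega)$ is $1$-Lipschitz, hence continuous. Now $\delta^{D}|_\Omega$ is the restriction of the metric $\delta^D$ to the subset $\Omega$, so it induces the subspace topology of $\Omega$ in $D$, which by Example~\ref{distance_du_diams} is exactly the standard topology of $\Omega$. Therefore the $\delta^\Omega$-topology is finer than the standard topology. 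Combining this with the previous paragraph, the two topologies coincide.

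The argument is essentially formal once Example~\ref{distance_du_diams} is available, so there is no real obstacle; the only points requiring a little care are the existence of arbitrarily small diamonds around each point of $\Omega$ (immediate from the description of diamonds as norm balls in an affine chart) and the fact that a proper domain is globally contained in a single diamond (already used in the proof of Proposition~\ref{prop_mark_hyp}).
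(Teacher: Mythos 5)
Your proof is correct and follows essentially the same route as the paper: squeeze $\delta^\O$ between the Markowitz distance of a small diamond inside $\O$ (via Proposition~\ref{propdebasedmark} and Example~\ref{distance_du_diams}) and that of a large diamond containing $\O$, which exists by properness. The paper phrases the argument with sequences rather than continuity of identity maps, but the content is identical.
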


\begin{proof}
We need to show that $\delta^\O$ defines the same convergence as the one induced on~$\O$ by the topology of $\Ein^{p,q}$. Let $(x_k)$ be a sequence of $\O$ and let $x\in \O$. Assume first that $x_k\to x$ as $k\to \infty$. Let  $D$ be a diamond containing $x$ and contained in $\O$. Example \ref{distance_du_diams} shows that $\delta^D$ defines the standard topology on $D$, so that $\delta^D (x, x_k) \to 0$ as $k\to+\infty$.
By Proposition~\ref{propdebasedmark}, one has $\delta^{\O}(x,x_k) \leq \delta^D (x, x_k)$. Thus $\delta^\O(x,x_k)\to 0$ as $k\to \infty$. 

The converse implication is immediate if $\O$ is proper. Indeed assume that $\delta^\O(x,x_k)\to 0$ as $k\to \infty$ and let $D$ be a diamond containing $\O$. Then $\delta^D(x,x_k) \leq \delta^\O (x, x_k)$ so that $\delta^D(x,x_k)\to 0$ as $k\to \infty$. By Example \ref{distance_du_diams} this implies that $(x_k)$ converges to $x$. 
$\qedhere$    
\end{proof}
\begin{rmk}\label{prop_asymp_dist_mark}
    If $\O$ is proper and $(x_k), (y_k) \in \O^{\mathbf{{N}}}$ satisfy $\delta^{\O}(x_k, y_k) \rightarrow 0$, and if $x_k \rightarrow x \in \partial \O$, then $y_k \rightarrow x$. To prove this, take a diamond $D$ containing $\overline{\O}$, and apply the proof of Proposition~\ref{distmark}. 
    By invariance of the Markowitz distance function, this fact implies that if $\O \subset \Ein^{p,q}$ is proper and almost-homogeneous, then for any $a \in \partial \O$, there exist $(g_n) \in \Conf(\O)^{\mathbf{N}}$ and $y \in \O$ such that $g_n \cdot y \rightarrow a$.
\end{rmk}

\subsection{Properness of the Markowitz distance function}
\label{properness}

Let $\O$ be a Markowitz-hyperbolic domain and let $x, y \in \O$. Any sequence
$$((x_0, \cdots, x_{N}), (\alpha_i)_{1 \leq i \leq N-1}) \in \mathcal{C}_{x,y}$$
with $(s_i), (t_i) \in I$ such that $\alpha_i (s_i) = x_{i-1}$, $\alpha_i (t_i) = x_i$, gives rise to a continuous path $\gamma$ from $x$ to $y$, with image the union of all the images of $\alpha_i |_{[s_i,t_i]}$. This path is unique up to reparametrization.
We write $L(\gamma)$ for the length of $\gamma$ measured with the metric $\delta^{\O}$. This length does not depend on the choice of parametrization. Hence we will identify the elements of $\mathcal{C}_{x,y}$ with continuous paths $\gamma$ arising from this construction, by means of an arbitrary choice of parametrization.

\begin{lem}\label{length metric}
    Assume $\O$ is a proper dually convex domain of $\Ein^{p,q}$. Then $\delta^{\O}$ is a length metric on $\O$. Moreover, for any maximal lightlike geodesic $\alpha: I \rightarrow \O$ and any $s,t \in I$, the length of the segment $\alpha_{[s,t]}$ is equal to $d_{hyp}(s,t)$. 
\end{lem}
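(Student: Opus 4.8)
The plan is to prove the two assertions separately, with the length-segment identity for photons being the main content and the length-metric property following from it together with dual convexity.

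\emph{Step 1: the segment identity $L(\alpha_{[s,t]}) = d_{hyp}(s,t)$ for a maximal photon $\alpha \colon I \to \O$.} The inequality $L(\alpha_{[s,t]}) \le d_{hyp}(s,t)$ is immediate: the single chain $((\alpha(s),\alpha(t)),\alpha)$ already realizes cost $d_{hyp}(s,t)$, and since $\delta^\O(\alpha(s'),\alpha(t')) \le d_{hyp}(s',t')$ for all $s',t'$ on the photon, summing over a partition of $[s,t]$ shows $L(\alpha_{[s,t]}) \le \sum d_{hyp}(s_j,s_{j+1}) = d_{hyp}(s,t)$ (the hyperbolic length of $\alpha_{[s,t]}$ is additive). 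For the reverse inequality, I would use that $\alpha$ is the restriction to $\O$ of a \emph{maximal} projective immersion of $I$, and that $\delta^\O \ge \delta^D$ where $D \supset \overline{\O}$ is a diamond (Proposition~\ref{propdebasedmark}). Since $\O$ is proper, $\alpha$ lifts to a photon in $\Ein^{p,q}$, and maximality in $\O$ forces the endpoints of $\alpha$ to hit $\partial\O$; but what I really need is an affine-chart computation: choose a stereographic projection in which $\O$ is bounded, so $\alpha$ becomes an affine lightlike segment in $\R^{p,q}$, and compare $\delta^\O$ restricted to this segment with the one-dimensional Markowitz distance of the interval, which is exactly $d_{hyp}$ by the projective-parameter normalization. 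Concretely: any chain from $\alpha(s)$ to $\alpha(t)$ projects, and one shows (e.g.\ by the same comparison with the diamond model $\Diams^{p,q} = (-\H^p)\times\H^q$ from Example~\ref{distance_du_diams}, where $\delta^{\Diams^{p,q}}$ restricted to a maximal photon is $d_{hyp}$) that no chain can do better than the trivial one. This gives $\delta^\O(\alpha(s),\alpha(t)) = d_{hyp}(s,t)$, and applying this on each subinterval of a partition yields $L(\alpha_{[s,t]}) = d_{hyp}(s,t)$.

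\emph{Step 2: $\delta^\O$ is a length metric.} By definition of $\delta^\O$ as an infimum over chains, and since every chain gives rise to a continuous path $\gamma$ with $L(\gamma) = \sum_i d_{hyp}(s_i,t_i)$ by Step 1, we have $\delta^\O(x,y) = \inf_\gamma L(\gamma)$ where the infimum is over chain-paths from $x$ to $y$. The inequality $\delta^\O(x,y) \le \inf L(\gamma)$ over \emph{all} rectifiable paths is trivial; the point is the reverse, i.e.\ that chain-paths suffice to approximate the distance. This is where dual convexity enters: given $x,y \in \O$ and a near-optimal chain, the resulting path $\gamma$ is rectifiable for $\delta^\O$ with $L(\gamma)$ close to $\delta^\O(x,y)$, so the length-metric identity $\delta^\O(x,y) = \inf\{L(\gamma) : \gamma \text{ rectifiable path } x \to y\}$ reduces to showing that arbitrary rectifiable paths cannot beat chains — which follows because $\delta^\O$ is, by construction, an infimum of sums of hyperbolic lengths along photons, and properness (hence Markowitz-hyperbolicity, Proposition~\ref{prop_mark_hyp}) guarantees $\delta^\O$ is an honest metric inducing the standard topology (Proposition~\ref{distmark}), so lengths are well-behaved.

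\emph{Main obstacle.} The delicate point is Step 1, the \emph{lower} bound $\delta^\O(\alpha(s),\alpha(t)) \ge d_{hyp}(s,t)$: a priori a clever multi-photon chain through $\O$ might shortcut the photon $\alpha$. Ruling this out is exactly where one needs properness and the comparison with the explicit diamond metric — one sandwiches $\O$ between the photon itself and a diamond $D \supset \overline\O$, uses $\delta^\O \ge \delta^D$, and invokes the explicit formula $\delta^{\Diams^{p,q}} = \max\{d(x_p,y_p), d(x_q,y_q)\}$ from Example~\ref{distance_du_diams}, which on a maximal photon (where one hyperbolic factor is constant) reduces to $d_{hyp}$. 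I expect the clean way to organize this is to first record that on any maximal photon of a diamond the Markowitz distance equals $d_{hyp}$, then transfer via monotonicity. Dual convexity is used in Step 2 rather than Step 1, to ensure the length-metric structure is non-degenerate and paths realize the infimum behavior (and, in the companion geodesic statement, that geodesics exist).
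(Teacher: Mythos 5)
There is a genuine gap in Step 1, which is the heart of the lemma. Your lower bound rests on the comparison $\delta^{\O}\geq\delta^{D}$ for a diamond $D\supset\overline{\O}$ together with the explicit formula of Example~\ref{distance_du_diams}. But the hyperbolic parameter of a \emph{maximal} lightlike geodesic $\alpha:I\to\O$ is attached to the chord $\Delta\cap\O$, whereas on the same photon the distance $\delta^{D}$ is the hyperbolic distance of the parametrization attached to the longer chord $\Delta\cap D$; when $\overline{\O}\subsetneq D$ along $\Delta$ this is \emph{strictly smaller} than $d_{hyp}(s,t)$. So the sandwich only yields a weaker inequality and cannot give $\delta^{\O}(\alpha(s),\alpha(t))\geq d_{hyp}(s,t)$. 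Your parenthetical claim that dual convexity is used in Step 2 rather than Step 1 is exactly backwards: dual convexity is indispensable for Step 1, and the paper's Example~\ref{exnotcomplete} shows the conclusion of Step 1 is genuinely false without it (there $\delta^{\O}=\delta^{D}$ on $U\times U$, so maximal photon segments of $\O$ ending on the inner boundary piece have $\delta^{\O}$-distance strictly less than the $d_{hyp}$ of their $\O$-maximal parametrization). Moreover, even if you knew the identity for two points on a single photon of a diamond, your argument does not address chains whose intermediate points leave the photon; ``no chain can do better than the trivial one'' is precisely what has to be proved.

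The missing idea is the cross-ratio argument. Dual convexity provides, at the two endpoints $a,b$ of the chord $\Delta\cap\overline{\O}$, points $\xi_1,\xi_2$ in the dual set $K(\O)=\{\xi\mid C(\xi)\cap\O=\emptyset\}$ with $a\in C(\xi_1)$, $b\in C(\xi_2)$; one then shows
$d_{hyp}(s,t)=\sup_{\xi_1,\xi_2\in K(\O)}\log\left|[\xi_1:\alpha(s):\alpha(t):\xi_2]\right|$,
the supremum being attained (in the limit) by the supporting lightcones, and being a lower bound for each $d_{hyp}(s_i,t_i)$ along any photon segment contained in $\O$ because the segment's endpoints in $\partial\O$ lie between the intersections of $\Delta_i$ with $C(\xi_1)$ and $C(\xi_2)$. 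Since for fixed $\xi_1,\xi_2$ the quantity $\log|[\xi_1:\cdot:\cdot:\xi_2]|$ telescopes along an arbitrary chain, one gets $\sum_i d_{hyp}(s_i,t_i)\geq\log|[\xi_1:x:y:\xi_2]|$ for all $\xi_1,\xi_2\in K(\O)$, hence $\delta^{\O}(\alpha(s),\alpha(t))\geq d_{hyp}(s,t)$ after taking the supremum. This Hilbert-metric-style duality is what rules out multi-photon shortcuts, and it is the step your proposal does not supply. (Your Step 2 is then essentially fine but is the easy part: the chain-paths have $\delta^{\O}$-length at most their hyperbolic cost, which gives the length-metric property once Step 1 identifies these lengths exactly.)
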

\begin{proof} Let us fix some notation. Given two points $x,y \in \Ein^{p,q}$, there exist $u,v \in \mathbf{R}^{p+q+2} \smallsetminus \{0\}$ such that $x = \mathbf{P}(u)$ and $y = \mathbf{P}(v)$. Given two points $\xi_1 = \mathbf{P}(v_1), \xi_2 = \mathbf{P}(v_2) \in \Ein^{p,q}$, each quantity $\langle \xi_i, x\rangle$, $\langle \xi_i, y\rangle$ is not well defined in itself, however, the quantity 

\begin{equation*}
    |[\xi_1: x: y:  \xi_2]| := \left| \frac{\langle v_1, u \rangle \cdot \langle v_2, v \rangle}{\langle v_2, u\rangle \cdot \langle v_1, v\rangle} \right|
\end{equation*}
makes sense as soon as $\xi_1, \xi_2 \notin C(x) \cup C(y)$, because it does not depend on the choice of generators of $x,y, \xi_1$ and $\xi_2$. Moreover, we will write $\langle \xi, x \rangle \ne 0$ if, and only if $\langle v_1, u \rangle \ne 0$. This property does not depend on the choice of generators. \\
\indent Let $\mathbf{R}^{p,q}$ be some affine chart containing $\overline{\O}$. Let us show that maximal lightlike geodesics (i.e.\ lightlike geodesics $\alpha: I \rightarrow \O$ with endpoints in $\partial \O$) in $\O$ are geodesics for $\delta^{\O}$. Let us define
\begin{equation*}
    K(\O) := \{ \xi \in \Ein^{p,q} \mid \langle \xi, x \rangle \ne 0 \quad \forall x \in \O\}.
\end{equation*}
Let $\alpha: I \rightarrow \O$ be a maximal lightlike geodesic. Let $\Delta$ be the photon of $\Ein^{p,q}$ such that the image of $\alpha$ is contained in $\Delta$. Let $u \in \mathbf{R}^{p+q+2} \smallsetminus \{0\}$ such that $\alpha(0) = \mathbf{P}(u)$. Let $\{x_{\infty} \} = \Delta \smallsetminus \mathbf{R}^{p,q} $ and $v \in \mathbf{R}^{p+q+2} \smallsetminus \{0\}$ such that $x_{\infty} = \mathbf{P}(v)$. Let $a,b$ be the intersection of $\Delta$ with $\partial \O$, so that $a$ and $b$ are the endpoints of $\alpha$. Then there exist $\lambda, \mu \in \mathbf{R}$ such that $a = \mathbf{P}(u + \lambda v)$ and $b = \mathbf{P}(u + \mu v)$. By dual convexity, there exist $\xi_1 = [v_1], \xi_2 = \mathbf{P}(v_2) \in K(\O)$ such that $a \in C(\xi_1)$ and $b \in C(\xi_2)$. Then
\begin{equation*}
    \lambda = - \frac{\langle v_1, u \rangle }{\langle v_1, v \rangle} \text{ and } \mu = - \frac{\langle v_2, u \rangle }{\langle v_2, v \rangle}.
\end{equation*}
Now let $t \in I \smallsetminus \{ 0 \}$ and let $\mu' \in \mathbf{R}^*$ such that $\alpha(t) = \operatorname{Span}(u + \mu' v)$. Then 
\begin{equation*}
    d_{hyp}(0,t) = \log \left| \frac{\lambda \cdot (\mu-\mu') }{\mu \cdot (\lambda- \mu')} \right| = \log \left| \frac{\langle v_1, u \rangle \cdot \langle v_2, u + \mu' v \rangle}{\langle v_2, u  \rangle \cdot \langle v_1, u+ \mu' v \rangle} \right| = \log|[\xi_1: \alpha(0): \alpha(t):  \xi_2]|.
\end{equation*}
Then $d_{hyp}(0,t) \leq \sup_{\xi_1, \xi_2 \in K(\O)} \log|[\xi_1: \alpha(0): \alpha(t):  \xi_2]|$.

For the converse inequality, let $\xi_1, \xi_2 \in K(\O)$ and let $a',b' \in \Delta $ such that $\{a'\} = \Delta \cap C(\xi_1)$ and $\{b'\} = \Delta \cap C(\xi_2)$. Let $\beta: I \rightarrow \Ein^{p,q}$ be the lightlike geodesic with endpoints $a'$ and $b'$, whose image contains the image of $\alpha$. Then there exist $s,t' \in I$ such that $\beta(t') = \alpha(t)$ and $\beta(s) = \alpha(0)$. The same argument  
\begin{equation*}
d_{hyp}(s,t') = \log |[\xi_1: \beta(s): \beta(t'):  \xi_2]| =  \log |[\xi_1: \alpha(0): \alpha(t):  \xi_2]|
\end{equation*}
But $a',b' \notin \O$, so $d_{hyp}(s,t') \leq d_{hyp}(0, t)$. Hence
\begin{equation*}
        d_{hyp}(0,t) \geq \log|[\xi_1: \alpha(0): \alpha(t):  \xi_2]|.
\end{equation*}
This is true for all $\xi_1, \xi_2 \in K(\O)$, so $d_{hyp}(0,t) = \sup_{\xi_1, \xi_2 \in K(\O)} \log|[\xi_1: \alpha(0): \alpha(t):  \xi_2]|$.

Similarly, for all $s,t \in I$, one has
\begin{equation}\label{egalitédiqtances}
    d_{hyp}(s,t) = \sup_{\xi_1, \xi_2 \in K(\O)} \log|[\xi_1: \alpha(s): \alpha(t):  \xi_2]|.
\end{equation}
Now fix $s,t \in I$ and $x = \alpha(s)$, $y = \alpha(t)$. Let $((x_0, \cdots, x_{N}), (\alpha_i)_{1 \leq i \leq N-1}) \in \mathcal{C}_{x,y}$.
Then by~\eqref{egalitédiqtances} one has
\begin{equation*}
\begin{split}
    \sum_i d_{hyp}(s_i, t_i) &\geq \sum_i \sup_{\xi_1, \xi_2 \in K(\O)} \log |[\xi_1: x_i: x_{i+1}: \xi_2]| \\
    &\geq \sup_{\xi_1, \xi_2 \in K(\O)} \sum_i  \log |[\xi_1: x_i: x_{i+1}: \xi_2]|  \\
    & = \sup_{\xi_1, \xi_2 \in K(\O)}  \log |[\xi_1: x: y: \xi_2]|= d_{hyp}(s,t).
\end{split}
\end{equation*}
This is true for all $((x_0, \cdots, x_{N}), (\alpha_i)_{1 \leq i \leq N-1}) \in \mathcal{C}_{x,y}$, so $\delta^{\O}(x,y) \geq d_{hyp}(s,t)$. The converse inequality is straightforward. 

In particular, the length of a chain $\gamma = ((x_0, \cdots, x_{N}), (\alpha_i)_{1 \leq i \leq N-1}) \in \mathcal{C}_{x,y}$ is 
\begin{equation*}
    L(\gamma) = \sum_i \delta^{\O}(x_i, x_{i+1}).
\end{equation*}
Then one has $\delta^{\O}(x,y) = \inf_{\gamma \in \mathcal{C}_{x,y}} L(\gamma)$. Now let $\mathcal{P}_{x,y}$ the set of all rectifiable curves joining $x$ and $y$ in $\O$. By definition of the length of a curve,
$$\inf_{\gamma \in \mathcal{C}_{x,y}} L(\gamma) = \delta^{\O}(x,y) \leq \inf_{\gamma \in \mathcal{P}_{x,y}} L(\gamma).$$
Since chains are rectifiable (for the above identification with continuous paths), this inequality is an equality. Hence $\delta^{\O}$ is a length metric. $\qedhere$
\end{proof}

\begin{cor}
\label{dpropgeod}
Let $\O$ be a Markowitz-hyperbolic almost-homogeneous domain of $\Ein^{p,q}$. Then $\delta^{\O}$ is a proper distance function.
\end{cor}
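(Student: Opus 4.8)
The plan is to combine the length-space structure from Lemma \ref{length metric} with the quasi-homogeneity hypothesis via a Hopf--Rinow type argument. First I would observe that quasi-homogeneity implies dual convexity: by Theorem \ref{propreetqhimpliquedc}, a proper quasi-homogeneous domain is dually convex, so Lemma \ref{length metric} applies and $\delta^\O$ is a length metric. (If $\O$ is only assumed Markowitz-hyperbolic and not a priori proper, I would first note that a Markowitz-hyperbolic quasi-homogeneous domain must in fact be proper — this is where one should be slightly careful, but the statement as used downstream is really about proper domains, so I will assume properness; then Theorem \ref{propreetqhimpliquedc} gives dual convexity.) Since $\delta^\O$ induces the standard topology on $\O$ (Proposition \ref{distmark}), the metric space $(\O,\delta^\O)$ is locally compact: every point of $\O$ has a compact neighborhood in the manifold topology, hence in the $\delta^\O$-topology.

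Next I would invoke the compact core $\mathcal K \subset \O$ with $\O = \Conf(\O)\cdot\mathcal K$. Let $R = \operatorname{diam}_{\delta^\O}(\mathcal K) < \infty$ (finite since $\mathcal K$ is compact and $\delta^\O$ is continuous). Fix a basepoint $x_0 \in \mathcal K$. For any $x \in \O$, choose $g \in \Conf(\O)$ with $g^{-1}x \in \mathcal K$; since $\delta^\O$ is $\Conf(\O)$-invariant (Proposition \ref{propdebasedmark}(2)), $\delta^\O(x, gx_0) = \delta^\O(g^{-1}x, x_0) \leq R$. The key consequence is that closed balls are totally bounded in a uniform way: I claim every closed ball $\overline B(x_0, r)$ is compact. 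By local compactness, the set of radii $r$ for which $\overline B(x_0,r)$ is compact is nonempty and open-closed-ish in the usual Hopf--Rinow sense; the length-metric property (Lemma \ref{length metric}) is exactly what is needed to run the standard argument that the supremum of such $r$ cannot be finite. Concretely: if $\overline B(x_0, r)$ is compact, cover it by finitely many translates $g_1\mathcal K, \dots, g_m\mathcal K$ (possible since it is covered by the $g\mathcal K$ that meet it, each of diameter $\leq R$, and compactness extracts a finite subcover); then $\overline B(x_0, r+\epsilon)$ is contained in a bounded union of these translates and their ``$R$-neighborhoods,'' which is contained in finitely many translates of $\mathcal K$, hence relatively compact; being closed, it is compact. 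Thus $\overline B(x_0,r)$ is compact for all $r$, i.e.\ $\delta^\O$ is a proper distance function.

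The main obstacle, and the step deserving the most care, is the inductive extension of compactness of balls — making rigorous that a slightly larger ball is still covered by finitely many translates of $\mathcal K$. The subtlety is that a point $x$ with $\delta^\O(x_0,x) \leq r+\epsilon$ lies in some $g\mathcal K$, but one needs to bound $\delta^\O(x_0, gx_0)$ (say by $r + \epsilon + R$) to know $g\mathcal K$ lies in a fixed ball; since $\{g\mathcal K : \delta^\O(x_0,gx_0) \leq r+\epsilon+R\}$ is an open cover of the compact set $\overline B(x_0, r+\epsilon/2)$ — here using that $\delta^\O$-balls of radius $\leq r+\epsilon/2$ are contained in $B(x_0, r+\epsilon)$ and the length-metric property to avoid gaps — one extracts a finite subcover and concludes. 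Once this bootstrap is in place, properness of $\delta^\O$ follows immediately, and the geodesic property then comes for free from the Hopf--Rinow theorem for length spaces (\cite[Prop.\ 3.7]{bridson2013metric}), since a proper length space is geodesic; this matches the assertion made right before the statement that $\delta^\O$ ``will be a proper and geodesic distance function.''
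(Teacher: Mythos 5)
Your overall route is the same as the paper's: dual convexity from Theorem \ref{propreetqhimpliquedc}, the length-metric property from Lemma \ref{length metric}, local compactness of $(\O,\delta^\O)$ from Proposition \ref{distmark}, invariance from Proposition \ref{propdebasedmark}, and then a Hopf--Rinow-type argument exploiting the cocompact action by isometries. The paper simply quotes two results at this last stage: a locally compact length space with a cocompact isometry group is complete \cite[Lem.\ 9.2]{Zimpropqh}, and a complete locally compact length space is proper \cite[Prop.\ 3.7]{bridson2013metric}. You instead try to prove compactness of balls directly, and this is where your argument has a genuine gap.

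The bootstrap step is circular as written. From compactness of $\overline{B}(x_0,r)$ you pass to $\overline{B}(x_0,r+\epsilon)$ by saying it lies in the union of finitely many translates $g_i\mathcal{K}$ and of their ``$R$-neighborhoods'', ``which is contained in finitely many translates of $\mathcal{K}$, hence relatively compact''. But the claim that a $\delta^\O$-bounded set is contained in finitely many translates of $\mathcal{K}$ is essentially the properness you are trying to prove: infinitely many translates may be needed to cover a bounded set unless one already knows it is relatively compact. Moreover the translates $g\mathcal{K}$ form a cover by compact sets, not an open cover, so ``compactness extracts a finite subcover'' does not apply as stated (and in your refined version you extract a subcover over $\overline{B}(x_0,r+\epsilon/2)$, whose compactness is again part of what is being proved); the ``open-closed-ish'' limit-radius issue is also left untreated. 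The missing ingredient is uniform local compactness: cover $\mathcal{K}$ by finitely many balls $B(x_i,r_i/2)$ with $\overline{B}(x_i,r_i)$ compact, set $\epsilon=\min_i r_i/2$, and use $\Conf(\O)\subset\Iso(\O,\delta^\O)$ to see that every closed $\epsilon$-ball in $\O$ is compact. With this uniform $\epsilon$ one either gets completeness (any Cauchy sequence eventually lies in a compact $\epsilon$-ball) and invokes Hopf--Rinow as the paper does, or runs your induction in steps of size $\epsilon/2$ using the length property of Lemma \ref{length metric}, with no limit-step problem. Finally, your parenthetical claim that a Markowitz-hyperbolic quasi-homogeneous domain is automatically proper is false: the Misner domain of Remark \ref{Ouvert_MH_homogene} is homogeneous, Markowitz-hyperbolic and not proper. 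You are right that the statement is only used for proper (hence, by Theorem \ref{propreetqhimpliquedc}, dually convex) domains, so assuming properness is the correct reading, but the implication should not be asserted.
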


\begin{proof}
By Propositions \ref{propdebasedmark} and \ref{distmark} and Lemma~\ref{length metric}, the space $(\O, \delta^{\O})$ is a locally compact length metric space and since $\Conf(\O)\subset\Iso (\O, \delta^{\O})$ there exists some compact subset $\mathcal{K} \subset \O$ such that $ \O = \Iso (\O, \delta^{\O}) \cdot \mathcal{K}$. This implies that $(\O, \delta^{\O})$ is complete (see \cite[Lem.\ 9.2]{Zimpropqh}). Then by Hopf--Rinow Theorem, the distance function $\delta^{\O}$ is proper (see \cite[Prop.\ 3.7]{bridson2013metric}). $\qedhere$
\end{proof}

Note that Hopf--Rinow Theorem also implies that the distance function $\delta^{\O}$ is geodesic, although we will not use this fact.

\begin{ex}\label{exnotcomplete}One can construct proper causally convex domains of $\mathbf{R}^{1,n-1}$ whose Markowitz distance function is incomplete. Let $D\subset \mathbf{R}^{1,n-1}$ be a diamond with past endpoint $x$ and let $y\in D$. The domain $\O=D\smallsetminus J^+(y)$ is a causally convex domain contained in $D$ (see Figure \ref{exemple_non_complet}). Let $U=\Diams(x,y)$ be the shaded diamond in Figure \ref{exemple_non_complet}. We claim that $\delta^\O$ and $\delta^D$ coincide on $U$. We already have the inequality $\delta^D\leq\delta^\O$ on $U$. Let $a,b\in U$ and let $c\in U$ such that $[a,c]$ and $[b,c]$ are lightlike segments (exist by Fact~\ref{pathsindiamonds}). Let $\Delta$ (resp.\ $\Delta^\prime$) be the photon containing $a$ and $c$ (resp.\ $b$ and $c$) and let $I=\Delta\cap D$ (resp.\ $I=\Delta^\prime\cap D$). The union of $I$ and $I^\prime$ is a chain $C$ of $D$ from $a$ to $b$ such that $L(C)=\delta^D(a,b).$ Since both $I$ and $I^\prime$ are contained in $\Omega$, the chain $C$ is also a chain of $\Omega$ from $a$ to $b$. In particular $\delta^\Omega(a,b)\leq L(C)=\delta^D(a,b)$. This proves the converse inequality. Now let $(x_k)$ be a sequence of $U$ converging to $y\in \partial \Omega$. Then $(x_k)$ is a Cauchy sequence of $(\Omega,\delta_\Omega)$ that does not converge in $\Omega$.

\begin{figure}
    \centering
        \begin{tikzpicture}[scale=1.2]
        \draw[dashed] (2,2) arc (0:180:2 and 0.15);
        \fill[fill=white] (-0.13,2.13) circle (1.5pt);
        \fill[fill=white] (0.13,2.13) circle (1.5pt);
        \draw (0,0)--(2,2)--(1,3);
        \draw[dashed] (1,3)--(0,2)--(-1,3);
        \draw (-1,3)--(-2,2)--(0,0);
        \draw[dashed] (-0.5,2.5) arc (180:360:0.5 and 0.075);
        \draw[dashed] (0.5,2.5) arc (0:180:0.5 and 0.075);
        \draw (-1,3) arc (180:360:1 and 0.15);
        \draw (1,3) arc (0:180:1 and 0.15);
        \draw (-1,1) arc (180:360:1 and 0.15);
        \draw[dashed] (1,1) arc (0:180:1 and 0.15);
        \draw[fill, pattern = north east lines, opacity=0.5] (0,0)--(1,1)--(0,2)--(-1,1)--(0,0);
        \draw (1,1)--(0,2)--(-1,1);
        \fill[white] (-0.3,1.86-0.06)--(0.3,1.86-0.06)--(0.3,1.86+0.06)--(-0.3,1.86+0.06)--(-0.3,1.86-0.06);
        \draw (-2,2) arc (180:360:2 and 0.15);
        \draw node at (0.3,0) {$x$};
        \fill (0,0) circle (1.3pt);
        \draw node at (0.3,2) {$y$};
        \fill (0,2) circle (1.3pt);
        \draw node at (-0.7,0.3) {$U$};
    \end{tikzpicture}
    \caption{A proper causally convex domain with incomplete Markowitz distance function.}
    \label{exemple_non_complet}
\end{figure}
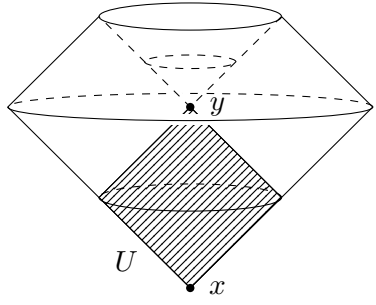
\end{ex}

\section{Dynamics at points of the boundary}\label{sect_dyn_boun}

When the conformal group of a proper domain of the Einstein universe is cocompact, it gives information on the structure of the domain and its boundary. In this section, using the Markowitz distance function, we investigate the structure of the boundary of a proper almost-homogeneous domain.

\subsection{Photon-extremal points}\label{sect_extr_are_visual}Let $\O \subset \Ein^{p,q}$ be a proper domain. Let $a \in \partial \O$. We say that $a$ is \emph{photon-extremal} if for any photon $\Delta$ passing through $a$, the point $a$ is not contained in $\operatorname{int}_\Delta (\Delta \cap \overline{\O})$, where $\operatorname{int}_\Delta(S)$ denotes the interior of a subset $S$ with respect to the induced topology on $\Delta$. We will denote by $\Extr(\O)$ the set of photon-extremal point of $\O$.

Our goal is to prove the following strong property of photon-extremal points, under the assumption that the domain is almost-homogeneous and proper. 

\begin{prop}
    \label{extremal_implique_conedisjoint}
    Let $\O$ be a proper almost-homogeneous domain. If $a\in\partial \O$ is photon-extremal, then $C(a)$ is disjoint from $\O$. 
\end{prop}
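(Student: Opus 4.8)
The plan is to argue by contradiction: suppose $a \in \partial\O$ is photon-extremal but there is a point $b \in C(a) \cap \O$. Pick a photon $\Delta$ through $a$ and $b$ (it exists since $b \in C(a)$). The strategy, following \cite{van2019rigidity}, is to use the quasi-homogeneity of $\O$ to produce a divergent sequence $(g_k)$ in $\Conf(\O)$ such that $g_k(b)$ stays in a fixed compact subset $\mathcal{K} \subset \O$ (this is exactly what quasi-homogeneity gives us, after passing to a subsequence converging to some $b_\infty \in \O$). The key observation is that the Markowitz distance $\delta^\O$ is $\Conf(\O)$-invariant (Proposition~\ref{propdebasedmark}) and, since $\O$ is proper and quasi-homogeneous, it is a proper distance function (Corollary~\ref{dpropgeod}), inducing the standard topology (Proposition~\ref{distmark}). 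Now I would track the $\delta^\O$-distances from $g_k(b)$ to nearby points along $g_k(\Delta)$: since $b$ lies in the \emph{interior} of $\Delta \cap \overline{\O}$ relative to $\Delta$ (because $b \in \O$ is an interior point of the photon segment inside $\O$), there is a nontrivial sub-segment of $\Delta$ around $b$ contained in $\O$, and by Lemma~\ref{length metric} the $\delta^\O$-length of this segment equals its hyperbolic length $d_{hyp}$ in a projective parametrization, which is finite and positive.

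\textbf{Main mechanism.} The crucial dichotomy is about what happens to $g_k(a)$. Since $a \in \partial\O$, and $g_k(b) \to b_\infty \in \O$, I claim $g_k(a)$ cannot stay bounded away from $\partial\O$; more precisely, I want to show that along the photon $\Delta$, the pair $(g_k(a), g_k(b))$ behaves in a way incompatible with photon-extremality of $a$ unless $C(a) \cap \O = \emptyset$. Here is where I would invoke the dynamics of $\PO(p+1,q+1)$. Because $(g_k)$ is divergent and $g_k(b)$ stays in a compact set of $\O$ while $g_k(a) \to$ some point of $\overline{\O}$ (after a subsequence), and because $a \in C(b_\infty')$ for appropriate limit points, I would extract a contracting sequence using Fact~\ref{fact_dyna}: if $g_k$ were to push a compact neighborhood of $b$ onto something with nonempty interior converging to a point, we get contraction. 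Alternatively, run the argument with $g_k^{-1}$: the sequence $g_k^{-1}$ brings a compact neighborhood of $b_\infty$ back towards $b$, and since $g_k^{-1}(b_\infty) \to b$ while the "size" (in $\delta^\O$) of images must be preserved, but $\Delta \cap \O$ near $a$ has small $\delta^\O$-diameter... Let me be careful: the point is that $a$ being photon-extremal means the segment $\Delta \cap \overline{\O}$ does not extend past $a$; if $C(a) \cap \O \ni b$, then $\Delta$ enters $\O$ on one side of $a$, and photon-extremality forces $a$ to be a genuine endpoint (in $\Delta$) of $\Delta \cap \overline{\O}$. Then applying $g_k$ and using that $g_k(b) \to b_\infty$, the images $g_k(a)$ of this boundary endpoint must converge (subsequentially) to an endpoint, in $g_\infty(\Delta)$-coordinates, of a segment through $b_\infty$ — but by properness of $\delta^\O$ and Lemma~\ref{length metric}, segments through interior points have two honest endpoints on $\partial\O$ at finite $\delta^\O$-distance, and one can derive a contradiction with the divergence of $(g_k)$ by showing $\delta^\O(b, g_k^{-1}(\text{fixed point})) $ is bounded, forcing $(g_k)$ to subconverge in $\Iso(\O,\delta^\O)$, which is compact modulo $\mathcal{K}$...

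\textbf{Cleaner route.} Actually, I think the cleanest route is: assume $b \in C(a) \cap \O$, take $\Delta \ni a,b$, and let $\alpha: I \to \Ein^{p,q}$ be the lightlike geodesic along $\Delta$ with $\alpha(0)$ near $b$. Photon-extremality says $a$ is not in $\operatorname{int}_\Delta(\Delta \cap \overline{\O})$, so $\Delta \cap \overline{\O}$ lies on one side of $a$ (say, $\alpha((c,1)) \subset \overline{\O}$ for some $c$ with $\alpha(c) = a$, and $\alpha((-1,c]) \not\subset \overline{\O}$ near $c$). By quasi-homogeneity, choose $g_k \in \Conf(\O)$ with $g_k(b) \in \mathcal{K}$, subconverge $g_k(b) \to b_\infty \in \O$; the sequence $(g_k)$ is divergent in $\PO(p+1,q+1)$ (else $\O$ would be forced to be all of $\Ein^{p,q}$ or $\Conf(\O)$ would not be cocompact-with-boundary, contradiction with properness). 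By Fact~\ref{fact_dyna} applied to a suitable compact neighborhood, $(g_k)$ — or a related sequence — is $(x_0,y_0)$-contracting for some $x_0, y_0$. Since $b_\infty = \lim g_k(b) \notin C(y_0)$ would force $b_\infty = x_0$; examining where the photon segment $g_k(\alpha((c,1)))$ goes: it converges to $x_0 = b_\infty$ pointwise off $C(y_0)$, collapsing the segment, so $g_k(a) \to x_0 = b_\infty \in \O$. But $g_k(a) \in \partial\O$ for all $k$, so $b_\infty \in \partial\O$, contradicting $b_\infty \in \O$. The main obstacle, and where I'd spend the most care, is justifying the extraction of a contracting sequence and controlling that the photon through $a$ is genuinely collapsed to the attracting point rather than being "parallel" to the repelling cone $C(y_0)$ — i.e. ruling out that $a$ or $b$ lies in $C(y_0)$, which is precisely where the photon-extremality hypothesis and dual convexity (Theorem~\ref{propreetqhimpliquedc}) must be used to pin down the geometry.
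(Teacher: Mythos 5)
There is a genuine gap, and it sits exactly at the two places your sketch leans on most. First, quasi-homogeneity does \emph{not} produce a divergent sequence $(g_k)\subset\Conf(\O)$ with $g_k(b)$ in a fixed compact set for a \emph{fixed} point $b\in\O$: it only says that each point of $\O$ can be brought back to $\mathcal{K}$ by some element. In fact no such divergent sequence need exist at all: since $\delta^\O$ is a proper $\Conf(\O)$-invariant distance inducing the standard topology (Proposition~\ref{distmark}, Corollary~\ref{dpropgeod}), the return set $\{g\in\Conf(\O)\mid g(b)\in\mathcal{K}\}$ consists of isometries moving $b$ a bounded $\delta^\O$-distance, and for the diamond (which is the conclusion of the theorem) this set is compact, so your parenthetical argument for divergence ("else $\O$ would be all of $\Ein^{p,q}$\dots") cannot be correct. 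The divergent dynamics has to come from a sequence of \emph{points} of $\O$ tending to the extremal point $a$, pulled back into $\mathcal{K}$ by quasi-homogeneity, i.e.\ $y_k=g_k\cdot x_k\to a$ with $x_k\in\mathcal{K}$, $x_k\to x\in\O$. Second, even once you have a divergent sequence, divergence in $\PO(p+1,q+1)$ does not imply contraction: Fact~\ref{fact_dyna} requires a compact set with nonempty interior whose images Hausdorff-converge to a point, and producing that collapse is precisely where photon-extremality must be used. This is the content of the paper's Lemma~\ref{extremal_implique_visuel} (if $x_k\to a$ photon-extremal and $\delta^\O(x_k,y_k)$ stays bounded, then $y_k\to a$; proved via the properness of $\delta^\O$ and the geodesic description of photon segments in Lemma~\ref{length metric}), which gives $\limhaus g_k\cdot B(x,1)=\{a\}$ and hence that $(g_k)$ is $(a,y)$-contracting with attracting point $a$ itself. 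You acknowledge this step as "the main obstacle" but never carry it out, and your intended contraction has attracting point $b_\infty\in\O$, a configuration that is in any case impossible (it would push the closed invariant set $K(\O)=\{z\mid C(z)\cap\O=\emptyset\}$ into $\O$).

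Note also that the paper's proof is not by contradiction and needs no case analysis about the photon being "parallel" to the repelling cone: once $(g_k)$ is $(a,y)$-contracting, one simply picks a point $z\in K(\O)\smallsetminus C(y)$ (possible because $K(\O)$ has nonempty interior, by properness of $\O$), observes that $g_k\cdot z\to a$ and that $K(\O)$ is closed and $\Conf(\O)$-invariant, and concludes $a\in K(\O)$, i.e.\ $C(a)\cap\O=\emptyset$. Your outline contains the right ingredients in spirit (Markowitz invariance, Fact~\ref{fact_dyna}, dual convexity/properness of the dual set), but the argument as proposed does not assemble them into a proof: the divergence claim is false as stated, and the collapse-of-balls lemma that makes the dynamics usable is missing.
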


To prove Proposition~\ref{extremal_implique_visuel}, we need to introduce new functions of $\O \times \O$, related to the distance function $\delta^{\O}$. Let $x,y \in \O$, and $N \in \N^*$. Let us define
$$\delta^{\O}_N (x,y) := \inf_{\gamma \in \mathcal{C}_{x,y}^N} L(\gamma).$$
Note that this quantity can be infinite.
For every $x,y \in \O$, the sequence $(\delta^{\O}_N(x,y))_{N \in \N^*}$ is nonincreasing, eventually finite,  and one has $\delta^{\O}(x,y) = \lim_{N \rightarrow + \infty} \delta^{\O}_N(x,y)$. 

\begin{lem}
    \label{extremal_implique_visuel}
    Let $\O$ be a proper almost-homogeneous domain and $x_\infty\in\partial \O$ be photon-extremal. Then for all $(x_k), (y_k) \in \O^{\mathbf{N}}$ such that $x_k \rightarrow x_\infty$, if the sequence $(\delta^{\O}_N(x_k, y_k))$ is bounded for some $N \in \mathbf{N}$, then $y_k \rightarrow x_\infty$. 
\end{lem}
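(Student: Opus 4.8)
The goal is to show that if $a$ is photon-extremal and $x_k \to a$ with $(\delta^\O(x_k,y_k))$ bounded, then $y_k \to a$. The natural strategy is to use quasi-homogeneity to renormalize: pick a compact set $\mathcal{K}$ with $\O = \Conf(\O)\cdot\mathcal{K}$, and for each $k$ choose $g_k \in \Conf(\O)\subset\PO(p+1,q+1)$ with $g_k(x_k) \in \mathcal{K}$. Passing to a subsequence, $g_k(x_k)\to x_\infty \in \mathcal{K}\subset\O$. Since $\delta^\O$ is $\Conf(\O)$-invariant, $\delta^\O(g_k(x_k),g_k(y_k))$ stays bounded; and since $\delta^\O$ induces the standard topology on $\O$ and is proper when $\O$ is proper quasi-homogeneous (Corollary~\ref{dpropgeod}), the points $g_k(y_k)$ stay in a $\delta^\O$-bounded, hence relatively compact, subset of $\O$. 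So after a further subsequence $g_k(y_k)\to y_\infty\in\O$.

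\textbf{Main step.} Now I would analyze the sequence $(g_k)$ dynamically. We have $x_k\to a\in\partial\O$, so $g_k$ moves a boundary point into the interior; I would argue that $(g_k^{-1})$ must be contracting in the sense of Section~\ref{dynamics}. The cleanest route: since $g_k(x_k)\to x_\infty \in \O$ with $x_k\to a\in\partial\O$, the sequence $(g_k)$ cannot stay in a compact subset of $\PO(p+1,q+1)$ (otherwise a subsequential limit $g_\infty$ would send $a\in\partial\O$ to $x_\infty\in\O$, contradicting that $g_\infty$ preserves $\O$ hence $\partial\O$). So $(g_k)$ is divergent; up to subsequence, by Fact~\ref{fact_dyna} applied after checking that $g_k^{-1}$ collapses a neighborhood — more precisely, I would use that $g_k^{-1}(\mathcal{K}')$ for a suitable compact neighborhood $\mathcal{K}'$ of $x_\infty$ in $\O$ converges Hausdorff to a point, since $x_k = g_k^{-1}(g_k(x_k))$ with $g_k(x_k)$ ranging in a compact set and $x_k\to a$ — to conclude $(g_k^{-1})$ is $(a,y)$-contracting for some $y\in\Ein^{p,q}$: it converges uniformly on compacts of $\Ein^{p,q}\smallsetminus C(y)$ to the constant $a$. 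Then for any point $z$ in the interior of $\O$ not on $C(y)$, $g_k^{-1}(z)\to a$. Apply this to $z = y_\infty$ and to $z=x_\infty$: we get $y_k\to a$ \emph{provided} $y_\infty\notin C(y)$.

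\textbf{The obstacle: ruling out $y_\infty\in C(y)$.} This is exactly where photon-extremality of $a$ must enter, and I expect it to be the crux. If $y_\infty$ (or $x_\infty$) lies on $C(y)$, the contraction argument degenerates. The idea is that the ``repelling'' data of $(g_k^{-1})$ near $x_\infty$, when pulled back, should produce a photon through $a$ lying in $\overline\O$, contradicting extremality. Concretely: consider a photon $\Delta_\infty$ through $x_\infty$ contained in (or meeting the interior of) $\O$ in a nondegenerate arc — such exists since $x_\infty\in\O$ and $\O$ is open, and one can choose $\Delta_\infty$ so that $\Delta_\infty\cap\O$ is an interval containing $x_\infty$ in its interior. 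If $y_\infty\in C(y)$, one shows that $g_k^{-1}(\Delta_\infty)$ converges (in the Hausdorff sense on photons) to a photon $\Delta_a$ through $a$ whose intersection with $\overline\O$ contains a neighborhood of $a$ in $\Delta_a$ — using that $g_k^{-1}$ maps the segment $\Delta_\infty\cap\O\subset\O$ into $\O$, that $\delta^\O$-lengths of such segments are controlled (Lemma~\ref{length metric}: the $\delta^\O$-length of a photon segment equals a hyperbolic length, hence is not crushed to zero unless the segment is), and a compactness argument for photons in $\Ein^{p,q}$. This contradicts $a\in\Extr(\O)$. Therefore $y_\infty\notin C(y)$, the contraction applies, and $y_k = g_k^{-1}(g_k(y_k))\to a$, as $g_k(y_k)\to y_\infty$ and convergence is uniform on a compact neighborhood of $y_\infty$ disjoint from $C(y)$. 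The delicate points to get right are: (i) the precise sense in which $g_k^{-1}$ is contracting and which Cartan data it uses; (ii) ensuring the limiting photon through $a$ genuinely witnesses a violation of extremality, i.e.\ that $a$ lands in the \emph{interior} (within the photon) of $\Delta_a\cap\overline\O$; and (iii) that $y\in C(y)$ issues are handled symmetrically for $x_\infty$ so the whole scheme is consistent.
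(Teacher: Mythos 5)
Your renormalization set-up (choose $g_k$ with $g_k(x_k)\in\mathcal{K}$, use invariance and properness of $\delta^{\O}$ from Corollary~\ref{dpropgeod} to get $g_k(x_k)\to x_\infty$, $g_k(y_k)\to y_\infty$ in $\O$) matches the beginning of the paper's argument, but the main step has a genuine gap: you cannot deduce that $(g_k^{-1})$ is contracting from the information you have at this stage. Your justification is that $g_k^{-1}$ sends the single sequence of points $g_k(x_k)$ (lying in a compact set near $x_\infty$) to $x_k\to a$, and you then claim that $g_k^{-1}(\mathcal{K}')$ Hausdorff-converges to a point for a compact neighborhood $\mathcal{K}'$ of $x_\infty$. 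That inference is invalid: a divergent sequence preserving a proper domain need not be contracting (in $\Conf(\Diams^{p,q})\simeq\PO(p,1)\times\PO(1,q)$, take a sequence diverging at comparable rates in both factors; then $\lambda_0(k)/\lambda_1(k)$ stays bounded), and $g_k^{-1}(\mathcal{K}')$ could degenerate onto a nontrivial subset (e.g.\ an arc of a photon through $a$) rather than a point. What you actually know is that $g_k^{-1}(\mathcal{K}')$ has bounded $\delta^{\O}$-diameter and contains $x_k\to a$; concluding that it collapses to $\{a\}$ is precisely the statement of the lemma you are proving, so this route is circular. Indeed, in the paper the logical order is the reverse: Lemma~\ref{extremal_implique_visuel} is proved first, and only then is contraction of the renormalizing sequences obtained (in the proof of Proposition~\ref{extremal_implique_conedisjoint}) by applying Fact~\ref{fact_dyna} to the images of Markowitz balls, which requires exactly this lemma.

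The paper's actual mechanism is different and avoids any dynamical input: quasi-homogeneity plus properness of $\delta^{\O}$ are used to upgrade the bound on $\delta^{\O}(x_k,y_k)$ to a bound on $\delta^{\O}_N(x_k,y_k)$ for a \emph{fixed} chain index $N$ (Lemma~\ref{majorNcpt}, applied to the compact $M$-neighborhood $\mathcal{K}'$ of $\mathcal{K}$, after translating the pair back by $g_k^{-1}$ and using invariance of $\delta^{\O}_N$); then a purely photon-geometric argument (Lemma~\ref{presquevisual}) handles one segment at a time: the segment through $x_k,y_k$ lies on a photon whose intersection with $\overline{\O}$ has endpoints converging to points on a limit photon through $a$, photon-extremality forces $a$ to be an endpoint, and the exact length formula of Lemma~\ref{length metric} (boundedness of the hyperbolic parameters) then forces $y_k\to a$; induction on the (now uniformly bounded) number of segments finishes. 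Your second paragraph ("the obstacle") gropes toward this kind of photon-limit argument, but as written it is a sketch with the decisive points explicitly left open, and in any case it is invoked only to repair the contraction step, which is itself unsupported. So the proposal does not constitute a proof; to fix it you would essentially have to abandon the contraction strategy and run the chain-index argument, at which point the dynamics become unnecessary.
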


\begin{figure}[H]
    \centering
    \begin{tikzpicture}[scale = 3]

          \draw (-1,-1) -- (-0.8,-1.4) -- (-0.4,-1.85) -- (0,-2) -- (0.4,-1.85) -- (0.8,-1.4) -- (1,-1) ;
          \fill[white] (0.78,-1.42) circle (1pt);
          \draw node at (0.33,-1.8) {$y_\infty$};
          \draw node at (0.75,-1.4) {$x_\infty$};
          \draw node at (-0.05,-1.5) {$y_k$};
          \draw node at (0.23,-1.23) {$x_k$};
        \draw[red] (0.1,-1.9) -- (0.8,-1.2);
        \fill (0.3,-1.7) circle (0.7pt);
          \fill (0.6,-1.4) circle (0.7pt);
        \draw node at (0.19,-1.59) {$\ddots$};
        \draw node at (0.49,-1.29) {$\ddots$};
        \begin{scope}[xshift=-7,yshift=4.5]
            \draw[red] (0.1,-1.9) -- (0.8,-1.2);
        \fill (0.3,-1.7) circle (0.7pt);
          \fill (0.6,-1.4) circle (0.7pt);
        \end{scope}
        \fill[white] (0.47,-1.13) circle (1pt);
        \draw (1,-1) arc (0:360:1 and 0.15);
        \end{tikzpicture}

    \caption{}
    \label{fig_visual_points}
\end{figure}

\begin{proof}[Proof of Lemma~\ref{extremal_implique_visuel}]
    For all $k \in \N$, let $\gamma_k = ((x_0^k, \cdots , x_N^k), (\alpha_i^k)_i) \in \mathcal{C}_{x_k, y_k}^N$ such that  
    $$\sum_{i=0}^{N-1} \delta^{\O}_1(x_i^k, x_{i+1}^k) = L(\gamma_k) \leq \delta^{\O}_N (x_k, y_k) + 1 \leq M + 1.$$
    Then in particular, for all $i \in \{ 0, \dots, N-1\}$ one has $\delta^{\O}(x^k_i, x^k_{i+1}) \leq M + 1$. Hence one can assume that $N = 1$ and deduce the result by induction. Let us then assume that $N=1$. For all $k\in\N$, there exists a maximal lightlike geodesic $\alpha_k: I \rightarrow \O$ and $s_k, t_k \in I$ with $s_k < t_k$, such that $\alpha_k(s_k) = x_k$ and $\alpha_k(t_k) = y_k$. Then by Propositions~\ref{propreetqhimpliquedc} and~\ref{length metric}, one has
    
    $$\delta^{\O} (x_k, y_k) = \delta^{\O}_1 (x_k, y_k) = d_{hyp}(s_k, t_k),$$ 
    so the sequence $(d_{hyp}(s_k, t_k))$ is bounded.

 The image of $\alpha_k$ is contained in a unique photon $\Delta_k$. Let $a_k, b_k$ be the endpoints of $\Delta_k \cap \overline{\O}$, such that $a_k, x_k, y_k, b_k$ are aligned in this order. Up to extracting, we may assume that $y_k \rightarrow y_{\infty} \in \overline{\O}$, $a_k \rightarrow a_{\infty} \in \partial \O$ and $b_k \rightarrow b_{\infty} \in \partial \O$ as $k \rightarrow + \infty$. For all $k \in \N$, the points $a_k, x_k, y_k, b_k$ lie on the same photon $\Delta_k$, so $a_{\infty}, x_{\infty},y_{\infty}, b_{\infty}$ lie on the same photon $\Delta$, in this order. By extremality of $x_{\infty}$, we must have $x_{\infty} = a_{\infty}$. This implies that $t_k \rightarrow -1$. But then, since the sequence $(d_{hyp}(s_k, t_k))$ is bounded, we must also have $s_k \rightarrow -1$. Hence $y_{\infty} = x_{\infty}$. See also Figure \ref{fig_visual_points}.$\qedhere$
\end{proof}

We can now prove Proposition~\ref{extremal_implique_conedisjoint}:

\begin{proof}[Proof of Proposition~\ref{extremal_implique_conedisjoint}] Let $a\in\partial \O$ be photon-extremal. Since $\O$ is almost-homogeneous we can find $x\in \O$ and a sequence $(g_k)\in\Conf(\O)$ such that
    $$g_k\cdot x\underset{k\to+\infty}{\longrightarrow}a.$$
    Now let $y \in \O$ and $N \in \mathbf{N}$ such that $\delta^{\O}_N(x,y) < + \infty$. Then, by $\Conf(\O)$-invariance of $\delta^{\O}_N$, one has 
\begin{equation*}
    \delta^{\O}_N(g_k \cdot x, g_k \cdot y) = \delta^{\O}_N(x, y) \quad \forall k \in \mathbf{N}.
\end{equation*}
Thus by Remark~\ref{prop_asymp_dist_mark}, we have $g_k \cdot y \rightarrow a$. This holds for all $y \in \O$. For any compact subset $\mathcal{K} \subset \O$, again by Remark~\ref{prop_asymp_dist_mark}, we have $\limhaus g_k\cdot \mathcal{K}=\limhaus \mathcal{K}= \{a\}$ as $k\to+\infty$. Using Fact \ref{fact_dyna}, there exists some $b \in \Ein^{p,q}$ such that $(g_k)$ is $(a,b)$-contracting, in the sense that 
$$g_{k|\Ein^{p,q} \smallsetminus C(b)} \rightarrow a,$$
uniformly on compact subsets of $\Ein^{p,q} \smallsetminus C(b)$. Since $\O$ is a $\Conf(\O)$-invariant proper open subset of $\Ein^{p,q}$, the set 
$$K(\O) := \{x \in \Ein^{p,q} \mid C(x) \cap \O = \emptyset\}$$
is a  $\Conf(\O)$-invariant closed subset of $\Ein^{p,q}$ with nonempty interior. In particular, we can find $x \in K(\O) \smallsetminus C(b)$, so that $g_k \cdot x \rightarrow a \in K(\O)$. This implies $C(a) \cap \O = \emptyset$ by definition of $K(\O)$. $\qedhere$
\end{proof}

\section{Lorentzian features}\label{sect_proof_loren}
This section is devoted to the Einstein universe of Lorentzian signature, meaning that $p=1$. We write $n=q+1$, so that conformal manifolds have signature $(1,n-1)$. In Lorentzian signature, the Einstein universe is endowed with an additional structure$\,$: the \emph{causal structure}. In other words, the Einstein universe $\Ein^{1,n-1}$ is a time-orientable spacetime (see Section \ref{spacetime}). This allows us to use the notion of \emph{causal convexity}. In Sections \ref{convexity} and \ref{Global hyperbolicity}, we introduce this notion and compare it to the notion of dual convexity from Section \ref{sectiondualconvex}. The causal structure provides a direct proof of Theorem \ref{mainth} in Lorentzian signature: see Section \ref{endoftheproof}. For a general proof in arbitrary signature, see Section \ref{sect_proof_any_sign}.

\subsection{Causal convexity}\label{convexity}
In this section we recall the notion of \emph{causal convexity}, specific to conformal Lorentzian manifolds, and compare it to the notion of dual convexity introduced in Section \ref{sectiondualconvex}.

Let $(M,[g])$ be a conformal spacetime (see Section~\ref{spacetime}). Recall that a domain $\O$ of $M$ is said to be \emph{causally convex} if every causal curve of $M$ joining two points of $\O$ lies entirely in $\O$ (cf \cite[pp 8]{Sanchez2008}). A domain $\O\subset M$ is causally convex if and only if for every pair $a,b\in \O$, the \emph{open diamond} $I(a,b)=I^-(a)\cap I^+(b)$ lies entirely in $\O$. This is also equivalent to saying that for every pair $a,b\in \O$, the \emph{closed diamond} $J(a,b)=J^-(a)\cap J^+(b)$ lies entirely in $\O$.
Since $\Ein^{1,n-1}$ is \emph{totally-vicious}, that is, there exists a closed timelike curve through any given point of $\Ein^{1,n-1}$, causally convex domains of $\Ein^{1,n-1}$ are all trivial (see \cite[pp 19]{Sanchez2008}). However, the notion of causal convexity in $\mathbf{R}^{1,n-1}$ is nontrivial (because the Minkowski space is globally hyperbolic, see Section \ref{Global hyperbolicity} for a definition). All our domains will be contained in affine charts, so we make the following definition.

\begin{definition}\label{causallyconv}
    A domain $\O\subset \Ein^{1,n-1}$ is \emph{causally convex} if $\O$ is contained in an affine chart as a causally convex domain of $\mathbf{R}^{1,n-1}$.
\end{definition}
 
One can show that for a domain $\O$ of $\Ein^{1, n-1}$, the property of being causally convex does not depend on the choice of affine chart containing $\O$. Note that this definition of causal convexity \emph{is not} the standard notion of causal convexity in the Einstein universe. 

\begin{prop}\label{dcimpliquecc}
    A dually convex domain (that is not $\Ein^{1,n-1}$) is causally convex.
\end{prop}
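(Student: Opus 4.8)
The plan is to show that if $\O$ is a dually convex domain properly contained in $\Ein^{1,n-1}$, then in a suitable affine chart $\R^{1,n-1}$ containing $\overline\O$ the set $\O$ is causally convex in the usual sense. First I would use dual convexity to produce an affine chart containing $\overline\O$: since $\O\neq\Ein^{1,n-1}$, its boundary is nonempty, so pick $a\in\partial\O$ and a supporting lightcone $C(b)$ with $C(b)\cap\O=\emptyset$; then $\O\subset\Ein^{1,n-1}\smallsetminus C(b)\simeq\R^{1,n-1}$, and in fact the same reasoning (applied to any boundary point, noting $a\in C(b)$ forces $a\in C(b)$) together with properness considerations lets us arrange $\overline\O$ to sit inside this chart. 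In this affine chart the photons of $\Ein^{1,n-1}$ passing through $\O$ become the affine lightlike lines of $\R^{1,n-1}$, and lightcones $C(\xi)$ with $\xi$ in the chart become affine lightcones, while $C(b)$ itself is the "cone at infinity''.

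Next I would translate dual convexity into a statement about affine lightcones: for every $a\in\partial\O$ there is a point $\xi\in\R^{1,n-1}\cup C(b)$ with $a\in C(\xi)$ and $C(\xi)\cap\O=\emptyset$; such an affine lightcone separates the chart into the inside of the future cone, the inside of the past cone, and the outside, and $\O$ lies entirely in one of these components (it cannot meet the cone, and it is connected). The key geometric point is then: a domain $\O\subset\R^{1,n-1}$ all of whose boundary points carry a supporting affine lightcone (pointing "outward'') must be causally convex. To see this, suppose not — then there is a causal curve, hence (since $\O$ is open and causal curves can be perturbed to timelike ones, and we may as well take a causal broken geodesic) a timelike segment or broken timelike path from $x\in\O$ to $y\in\O$ that exits $\O$. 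Take a first exit point $a\in\partial\O$ along this path. At $a$ there is a supporting lightcone $C(\xi)$ with $\O$ contained in, say, $I^+(\xi)$ (the interior of the future cone of $\xi$) — the other cases ($I^-(\xi)$, or the exterior) are handled symmetrically, and the exterior case is actually impossible because a timelike segment cannot stay in the (spacelike) exterior of a lightcone while having endpoints that must then also be outside, contradicting $x,y\in\O$; I would spell out that the exterior region of an affine lightcone contains no pair of causally related points, ruling it out immediately.

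So assume $\O\subset I^+(\xi)$ with $a\in C^+(\xi)=\partial I^+(\xi)$ (the future component of the lightcone; if $a$ is on the past component $C^-(\xi)$ then $\O\subset I^+(\xi)$ would be disjoint from a neighborhood of $a$, contradicting $a\in\partial\O$, so $a\in C^+(\xi)$). The set $I^+(\xi)$ is causally convex in $\R^{1,n-1}$ — this is the standard fact that the chronological future of a point is past-and-future closed under diamonds, i.e.\ $y\in I^+(\xi)$ and $z\in J^-(y)\cap J^+(w)$ with $w\in I^+(\xi)$ implies $z\in I^+(\xi)$. Therefore the timelike path from $x$ to $y$, both in $\O\subset I^+(\xi)$, stays in $I^+(\xi)$, so it never reaches $C(\xi)\ni a$; this contradicts the path passing through $a$. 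Hence $\O$ is causally convex. The main obstacle I anticipate is the bookkeeping at the start — choosing the affine chart so that $\overline\O$ (not just $\O$) lies inside it, and being careful that the "supporting point'' $\xi$ furnished by dual convexity may a priori lie on the cone at infinity $C(b)$ rather than in the finite chart; in that degenerate case $C(\xi)$ restricted to $\R^{1,n-1}$ is a degenerate affine hyperplane (a lightlike hyperplane), and the separation argument must be run with a lightlike half-space in place of the interior of a lightcone — but a lightlike half-space is also causally convex in $\R^{1,n-1}$, so the same conclusion follows. I would organize the proof by first reducing to the finite-cone case when possible and treating the lightlike-hyperplane case as a parallel (easier) sub-case.
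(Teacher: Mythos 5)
Your strategy is essentially the paper's: put $\O$ in an affine chart, suppose a causal curve between two points of $\O$ exits, take the supporting lightcone $C(\xi)$ at an exit point $a\in\partial\O$, note that $\O$, being connected and disjoint from $C(\xi)$, lies in a single connected component of the complement of $C(\xi)$ in $\R^{1,n-1}$ (a lightlike half-space in the degenerate case $\xi$ at infinity), and derive a contradiction. But one step fails: the claim that ``the exterior region of an affine lightcone contains no pair of causally related points'', which you use to dismiss the case where $\O$ lies in the exterior component, is false in every dimension. In $\R^{1,2}$ with $\b(v,v)=-t^2+x^2+y^2$, the points $(0,10,0)$ and $(1,10,0)$ are both spacelike to the origin, yet their difference $(1,0,0)$ is timelike, so they are chronologically related. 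The exterior case also genuinely occurs (for example, supporting cones at ``corner'' boundary points of a diamond have the diamond in their exterior component), so it cannot be waved away; inside your proof by contradiction you have no control over which component contains $\O$.

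The repair is exactly the uniform argument the paper uses: \emph{every} connected component of $\R^{1,n-1}\smallsetminus C(\xi)$ is causally convex. For the exterior component this holds because a future-directed causal curve that meets $J^+(\xi)$ (resp.\ $J^-(\xi)$) has its terminal (resp.\ initial) endpoint in $J^+(\xi)$ (resp.\ $J^-(\xi)$); hence a causal curve joining two exterior points stays in the exterior and can never reach $a\in C(\xi)$ --- the same contradiction you derive in the cases $\O\subset I^{\pm}(\xi)$. Separately, your opening claim that $\overline{\O}$ can be arranged to lie in the affine chart is unjustified (there is no properness hypothesis: an affine chart is itself dually convex and its closure is all of $\Ein^{1,n-1}$), but it is also unnecessary --- the definition of causal convexity only requires $\O\subset\R^{1,n-1}$, and the exit point $a$ automatically lies in the chart because the causal curve does. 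With the exterior case handled by causal convexity rather than the false lemma, your proof coincides with the paper's.
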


\begin{proof}
    Let $\O\subsetneq\Ein^{1,n-1}$ be a dually convex domain. Since $\Omega$ is disjoint from at least one lightcone, we can identify $\O$ with a subset of $\mathbf{R}^{1,n-1}$. Let $x,y \in\O$ and let $\gamma$ be a causal curve of $\R^{1,n-1}$ from $x$ to $y$. Assume by contradiction that $\gamma$ is not contained in $\O$, so that $\gamma$ intersects $\partial \O$ at a point $a\in\partial\Omega$. Since $\O$ is dually convex, there is a point $z\in\Ein^{1,n-1}$ such that $a\in C(z)$ and $C(z)\cap \O= \emptyset$.
    Since $\O \cap C(z)$ is empty, the domain $\O$ lies in one of the connected components of $\mathbf{R}^{1,n-1}\smallsetminus C(z)$. Write $U$ for this component (the number of connected component of $\R^{1,n-1}\smallsetminus C(z)$ is either $2$ or $3$ depending on whether $z\in\partial \R^{1,n-1}$ or $z\in \R^{1,n-1}$, see Figure \ref{Lightcones_in_Minkowski}). Because $U$ is itself causally convex, the curve $\gamma$ is contained in $U$, contradicting $a\in C(z)$.$\qedhere$ 
\end{proof}

\begin{figure}

    $$\begin{array}{cccccc}
    \begin{tikzpicture}[scale = 1.5]
          \draw (-1,-1) -- (1,1); 
          \draw (-1,1) -- (1,-1);
          \draw (-1,-1) arc (180:360:1 and 0.15);
          \draw[dashed] (1,-1) arc (0:180:1 and 0.15);
          \draw (-1,1) arc (180:720:1 and 0.15);
          \fill (0,0) circle (1pt);
          \draw node at (0.3,0){$z$};
          
          \begin{scope}[scale=0.5]
          \draw[dashed] (1,1) arc (0:180:1 and 0.15);
          \end{scope}
          
          \begin{scope}[scale=0.5]
          \draw (-1,1) arc (180:360:1 and 0.15);
          \end{scope}

        \end{tikzpicture}
        & & & & &
          \begin{tikzpicture}[scale = 0.8]
          \begin{scope}[rotate=45]
             \draw (-1,3) -- (1,2) -- (1,-3) -- (-1,-2) -- (-1,3);
             \fill (0,2.8) circle (0.7pt);
             \fill (0,3) circle (0.7pt);
             \fill (0,3.2) circle (0.7pt);
             
             \fill (0,-2.8) circle (0.7pt);
             \fill (0,-3) circle (0.7pt);
             \fill (0,-3.2) circle (0.7pt);
             
             \fill (1.2,-0.6) circle (0.7pt);
             \fill (1.4,-0.7) circle (0.7pt);
             \fill (1.6,-0.8) circle (0.7pt);
             
             \fill (-1.2,0.6) circle (0.7pt);
             \fill (-1.4,0.7) circle (0.7pt);
             \fill (-1.6,0.8) circle (0.7pt);

             \draw [->] (0,0) to (0,1);
             \fill (0,0) circle (1.2pt);
             \draw node at (0.3,0.5){$u$};
             \draw node at (-0.2,-0.2){$x_0$};
          \end{scope}

      \end{tikzpicture}
    
    \end{array}$$

    \caption{We draw the trace in $\R^{1,2}$ of the lightcone of a point $z\in\Ein^{1,2}=\R^{1,2}\cup C(\infty)$. On the left, the point $z$ belongs to $\R^{1,2}$ and the cone $C(z)$ is the set of lightlike vector from $z$. On the right, the point $z$ belongs to $C(\infty)\setminus\{\infty\}$ and $C(z)$ appears as a degenerate affine plane $P=x_0+u^\perp$ in $\R^{1,2}$, for some lightlike vector $u\in\R^{1,2}$.}
    \label{Lightcones_in_Minkowski}
\end{figure}
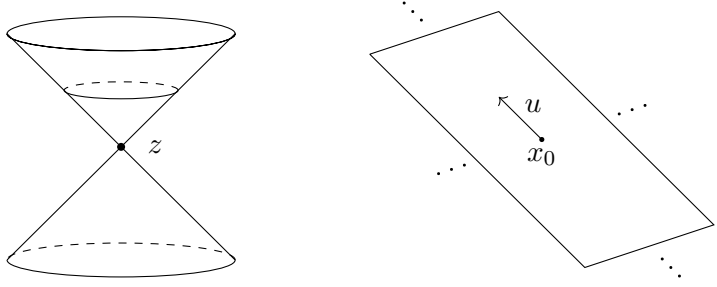

\subsection{Globally hyperbolic spacetimes.} \label{Global hyperbolicity}In this section, we give a complete characterization of proper dually convex domains of the Lorentzian Einstein universe. This section is not used in the rest of the paper. 

Recall that a conformal spacetime $(M,[g])$ is called \emph{globally hyperbolic} if it admits a Cauchy hypersurface $\Sigma$, that is $\Sigma$ is an acausal hypersurface such that any inextensible causal curve contained in $M$ meets $\Sigma$ exactly once (see \cite[Def.\ 3.74]{Sanchez2008}). Given two globally hyperbolic spacetimes $M$ and $N$, a \emph{Cauchy embedding} of $M$ into $N$ is a one-to-one conformal map $f:M\to N$ that sends a Cauchy hypersurface of $M$ to a Cauchy hypersurface of $N$. We say that a globally hyperbolic spacetime $M$ is \emph{maximal} is any Cauchy embedding of $M$ is onto (see \cite[Sect.\ 3]{Clara}). 

Concrete examples of globally hyperbolic spacetimes are causally convex  domains of $\mathbf{R}^{1,n-1}$ (and more generally of $\widetilde{\Ein}^{1,n-1}$). These domains have been studied in \cite{Barbot_2005} and \cite{smaï2023enveloping}. For simplicity we will only describe \emph{bounded} causally convex domains of $\R^{1,n-1}$, or equivalently proper causally convex domains of $\Ein^{1,n-1}$. Let $\O\subset\mathbf{R}^{1,n-1}$ be a bounded causally convex domain. We fix a spacelike hyperplane $H$ in $\mathbf{R}^{1,n-1}$ and we define $V$ to be the image of the orthogonal projection of $\O$ on $H$. If $x\in V$, the normal line to $H$ through $x$ intersects $\O$ in a bounded segment $(f_-(x),f_+(x))$. This defines two 1-Lipschitz functions $f_-,f_+:V\to H^\perp$ which coincide on $\partial V$ such that $$\O=\{(x,t)\in V\times H^\perp\,\vert\,f_-(x)<t<f_+(x)\}.$$
Conversely, any domain defined in this way, where $f_-,f_+:V\to H^\perp$ are 1-Lipschitz functions defined on a bounded domain $V\subset H$ that coincide on $\partial V$, is causally convex. All these domains are seen to be globally hyperbolic. A Cauchy hypersuface of $\O$ is exactly the graph of a 1-Lipschitz function $h:V\to H^\perp$ such that $f_-<h<f_+$. Moreover we know precisely  when these domains are maximal as globally hyperbolic spacetimes: with the notation above, the domain $\Omega$ is maximal exactly when $f_+$ and $f_-$ are \emph{eikonal} (see \cite[Prop.\ 15]{smaï2023enveloping}). We make the link between the ``abstract'' notion of dual convexity and the more concrete and understood notion of causally convex maximal domains of the Minkowski space.
\begin{prop}
    Let $\O$ be a proper causally convex domain of $\Ein^{1,n-1}$. Then $\O$ is dually convex if and only if $\Omega$ is globally hyperbolic maximal. 
\end{prop}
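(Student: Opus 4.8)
The plan is to prove the two implications separately, using the explicit description of bounded causally convex domains via the pair of $1$-Lipschitz functions $(f_-, f_+)$ recalled just before the statement, together with Proposition~\ref{dcimpliquecc} and the eikonal characterization of maximality from \cite{smaï2023enveloping}.

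\textbf{Maximal $\Rightarrow$ dually convex.} Suppose $\O$ is a proper causally convex domain of $\Ein^{1,n-1}$ that is globally hyperbolic maximal. Fix an affine chart $\mathbf{R}^{1,n-1}$ containing $\overline{\O}$ and a spacelike hyperplane $H$, so that $\O = \{(x,t) \in V \times H^\perp \mid f_-(x) < t < f_+(x)\}$ with $f_-, f_+ : V \to H^\perp$ being $1$-Lipschitz, eikonal, and agreeing on $\partial V$. Let $a \in \partial \O$. Either $a$ projects onto $\partial V$, or $a$ lies on the graph of $f_+$ (the case of $f_-$ being symmetric, after a time reversal). In the second case, I would use that $f_+$ is eikonal: at the point $x_0 \in V$ with $a = (x_0, f_+(x_0))$, the eikonal condition means that $f_+$ is, near $x_0$, the infimum of a family of ``downward light cones'' with apexes on $\partial\O$, and in particular there is a past lightcone through $a$ — i.e.\ a point $b \in \Ein^{1,n-1}$ with $a \in C(b)$ — whose intersection with the chart is a (possibly degenerate) upward cone lying above the graph of $f_+$, hence disjoint from $\O$. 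Concretely, the eikonal equation $\|\nabla f_+\| = 1$ (in the weak/viscosity sense) is exactly the statement that the hypograph of $f_+$ is invariant under pushing apexes of past light cones, which gives the supporting lightcone $C(b)$ with $C(b) \cap \O = \emptyset$. For $a$ projecting to $\partial V$: there $f_-(x_0) = f_+(x_0)$, so $a$ is a ``tip'' of $\O$; I would take $b = a$ itself if $C(a) \cap \O = \emptyset$, or more carefully use that at such a corner one of the two cone conditions from the interior boundary points extends by continuity. Assembling these cases shows every boundary point has a supporting lightcone, i.e.\ $\O$ is dually convex.

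\textbf{Dually convex $\Rightarrow$ maximal.} Conversely, assume $\O$ is proper, causally convex, and dually convex. By the description above it suffices to show $f_+$ and $f_-$ are eikonal. Fix a boundary point $a = (x_0, f_+(x_0))$ on the upper graph. By dual convexity there is $b$ with $a \in C(b)$ and $C(b) \cap \O = \emptyset$; as in the proof of Proposition~\ref{dcimpliquecc}, $\O$ lies in a single connected component $U$ of $\mathbf{R}^{1,n-1} \smallsetminus C(b)$, and $C(b)$ is either a genuine (double) lightcone with vertex in $\mathbf{R}^{1,n-1}$ or a degenerate null hyperplane (if $b \in \partial\mathbf{R}^{1,n-1}$), see Figure~\ref{Lightcones_in_Minkowski}. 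In either case $C(b)$ contains a null hyperplane or half-cone tangent to the graph of $f_+$ at $a$ from above; since this holds at every $a$, the graph of $f_+$ is an infimum of such null objects, which is precisely the statement that $f_+$ is eikonal (its graph is a ``front'' / solution of the eikonal equation, i.e.\ the lower envelope of past light cones issued from $\partial\O$). The same argument applied to $f_-$ with time reversed gives that $f_-$ is eikonal. By \cite[Prop.\ 15]{smaï2023enveloping} this means $\O$ is maximal as a globally hyperbolic spacetime.

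The main obstacle I expect is making the bridge between ``supporting lightcone at every boundary point'' and ``the defining functions are eikonal'' fully rigorous: one must handle the low-regularity ($1$-Lipschitz, not $C^1$) setting, interpret the eikonal equation in the viscosity sense, and carefully distinguish the two shapes of $C(b)$ (vertex in the finite chart versus at infinity), as well as the corner points where $f_- = f_+$ on $\partial V$. Packaging the geometric picture — that a $1$-Lipschitz function on $V$ coincides on $\partial V$ with its ``companion'' and is eikonal if and only if its graph admits a tangent null hyperplane (from the correct side) at every point — is the technical heart; once that dictionary is in place, both implications follow by invoking dual convexity in one direction and the structure of $C(b)$ in the other.
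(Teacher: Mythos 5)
There is a genuine gap, and it sits exactly where you flag it: both of your implications are reduced to an unproven ``dictionary'' between supporting null objects and the eikonal property, and that dictionary is precisely the content of the proposition. In the direction dually convex $\Rightarrow$ maximal, dual convexity gives you, at a boundary point $y=(x_0,f_+(x_0))$ with $x_0\in V$, a lightcone $C(b)$ with $y\in C(b)$ and $C(b)\cap\O=\emptyset$, but the apex $b$ is an arbitrary point of $\Ein^{1,n-1}$ (possibly at infinity), not a point of $\partial\O$, let alone of the corner set $\Lambda$ (the graph of $f=f_\pm\vert_{\partial V}$). Passing from ``some null object tangent to the graph of $f_+$ from above at every point'' to ``$f_+$ is the lower envelope of lightcones issued from $\Lambda$'' is not a rewording: it is the geometric step that has to be proved, and your tangency formulation even loses information, since tangency at $a$ alone does not imply that the corresponding sheet dominates $f_+$ globally over $V$ (one needs the global disjointness $C(b)\cap\O=\emptyset$ together with a connectedness argument to control on which side of $\O$ each sheet lies). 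The paper supplies exactly this missing step, and with a different citation from \cite{smaï2023enveloping}: instead of the eikonal criterion (Prop.\ 15), it uses the characterization that $\O$ is maximal if and only if it is a connected component of $\R^{1,n-1}\smallsetminus C(\Lambda)$ (Sect.\ 7.1). Concretely, it takes the photon $\Delta$ through $y$ and the apex of the supporting cone, notes that $\Delta\cap\overline\O$ is a lightlike boundary segment through $y$, and shows by an extension argument that its past endpoint must project to $\partial V$ (otherwise the segment could be prolonged inside $\overline\O$); hence $y\in C(b)$ with $b\in\Lambda$, so $\partial\O\subset C(\Lambda)$ and $\O$ is a component of $\R^{1,n-1}\smallsetminus C(\Lambda)$.

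In the direction maximal $\Rightarrow$ dually convex, your argument has the same local-to-global problem: the local eikonal property gives cones lying above the graph of $f_+$ \emph{near} $a$, which is not enough to conclude $C(b)\cap\O=\emptyset$, and the corner points over $\partial V$ are left unresolved (``take $b=a$ itself if\dots''). Both issues disappear once one uses the global description of maximal domains: if $\O$ is a component of $\R^{1,n-1}\smallsetminus C(\Lambda)$, every boundary point lies on some $C(b)$ with $b\in\Lambda$, and this cone is disjoint from $\O$ by construction; this is the paper's one-line proof of that implication. So your overall plan (work in the $(f_-,f_+)$ description and invoke \cite{smaï2023enveloping}) is the right frame, but as written it defers the step that carries the proof; to complete it you should either establish your eikonal dictionary rigorously in the $1$-Lipschitz/viscosity setting, or, more efficiently, replace it by the component-of-the-complement characterization together with the photon-sliding lemma above. (A minor point: Proposition~\ref{dcimpliquecc} is not needed here, since causal convexity is already an assumption of the statement.)
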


\begin{proof}
    We fix a suitable stereographic projection and identify $\O$ with a bounded subset of $\R^{1,n-1}$. We write $\R^{1,n-1}=H\oplus H^\perp$ for some spacelike hyperplane $H\subset \R^{1,n-1}$ and $\O=\{(x,t)\in V\times H^\perp\,\vert\,f_-(x)<t<f_+(x)\}$ where $V\subset H$ is a bounded domain and $f_-,f_+:V\to H^\perp$ are two 1-Lipschitz functions that coincide on $\partial V$. We denote by $f:\partial V\to H^\perp$ the common value of $f_+$ and $f_-$ on $\partial V$ and $\pi:\R^{1,n-1}\to H$ the projection map.
    
    If $\O$ is globally hyperbolic maximal, then (see \cite[Sect.\ 7.1]{smaï2023enveloping}) $\O$ is a connected component of the complementary $\R^{1,n-1}\smallsetminus C(\Lambda)$, where $\Lambda$ is the graph of $f$ and $C(\Lambda)=\cup_{x\in\Lambda}C(x)$. Therefore $\O$ is dually convex. 
    
    Assume now that $\O$ is dually convex. Let $\O^\prime=\R^{1,n-1}\smallsetminus C(\Lambda)$ where $\Lambda$ is the graph of $f$. Since $\O$ is causally convex, one has $\O\subset\O^\prime$. We want to show that $\partial\Omega\subset C(\Lambda)$. Let $y\in \partial \O$ and let $z\in\Ein^{1,n-1}$ such that $C(z)$ is a supporting lightcone at $y$. If $y\in\Lambda$ then in particular $y\in C(\Lambda)$. Assume $y\not\in\Lambda$, so that $y=(x,f_+(x))$ for some $x\in V$ (the case $y=(x,f_-(x))$ is similar). If $y\neq z$, let $\Delta$ be the unique photon through $y$ and $z$ (if $y=z$, take $\Delta$ to be any photon through $y$). The intersection $\Delta\cap\overline{\Omega}$ is a lightlike segment containing $y$, we write it $[a,b]$ with $a\in I^+(b)$. The projection $\pi([y,b])=[x,\pi(b)]$ is a segment contained in $\overline{V}$. Assume by contradiction that this segment is contained in $V$. Let $v\in H$ be a unit vector collinear to $\pi(b)-x$ and $v^\prime\in H^\perp$ be a unit future directed vector. For $\varepsilon $ sufficiently small one has $f_-(b+\varepsilon v)<f_+(\pi(b))-\varepsilon v^\prime\leq f_+(b+\varepsilon v)$ so $[x,b+\varepsilon(v+v^\prime)/\sqrt{2}]\subset\overline{\O}$, a contradiction with the definition of $[a,b]$.
    
    Therefore $\pi([y,b])$ intersects $\partial V$. We can always shorten $[y,b]$ and assume that $\pi([y,b))$ is contained in $V$ and $\pi(b)\in\partial V$. Therefore $y\in C(b)\subset C(\Lambda)$. This implies that $\O$ is a connected component of $\R^{1,n-1}\smallsetminus C(\Lambda)$, so $\O$ is globally hyperbolic maximal (see \cite[Sect.\ 7.1]{smaï2023enveloping}).$\qedhere$
\end{proof}

\subsection{A short proof of Theorem \ref{mainth} in Lorentzian signature} \label{endoftheproof} We consider the case $p = 1$ and $n :=q+1$. The key point here is the causal structure of $\R^{1, n-1}$. Also, we combine Propositions~\ref{propreetqhimpliquedc} and~\ref{dcimpliquecc} to conclude that proper aslmost-homogeneous domains of $\Ein^{1, n-1}$ are causally convex.

\begin{lem}
    \label{existence_point_extremal}
    Let $\O\subset \mathbf{R}^{1,n-1}$ be a bounded domain. Then for all $x \in \O$, the past $I^-(x)$ 
 and the future $I^+(x)$ both contain at least one photon-extremal point of $\O$. 
\end{lem}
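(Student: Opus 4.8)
The plan is to produce a photon-extremal point of $\O$ inside $I^-(x)$ by a compactness/maximality argument along a single photon; the case of $I^+(x)$ is symmetric. First I would fix $x \in \O$ and observe that, since $\O \subset \mathbf{R}^{1,n-1}$ is bounded, the past lightcone of $x$ meets $\partial\O$: pick any past-directed lightlike ray $r$ issuing from $x$; it eventually leaves the bounded set $\O$, so it hits $\partial\O$ at a first point $a_0$, lying on some photon $\Delta_0$ through $x$. The point $a_0$ need not be photon-extremal, so the idea is to ``push'' it toward extremality. Define, for $a \in \partial\O \cap I^-(x)$ on a photon $\Delta$, the segment $\Delta \cap \overline{\O}$; if $a$ is not photon-extremal it lies in the relative interior of this segment, and then one of the two endpoints of $\Delta \cap \overline{\O}$ is again a point of $\partial\O$ that still lies in $I^-(x)$ (here I use that $x$ itself is an interior point, so the endpoint of the segment on the ``$x$ side'' is an interior point of $\O$, forcing the relevant boundary endpoint to be on the far side, still in the past of $x$). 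This produces a new boundary point on a new photon.

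The main work is to iterate this and extract a limit. The cleanest way is to use a maximality argument with the function measuring ``how far $a$ is from $x$ along causal directions''. Concretely, identify $\overline{\O}$ with a compact subset of $\mathbf{R}^{1,n-1}$, fix a spacelike hyperplane $H$ with $x \notin H$ on the future side, and consider on the compact set $\partial\O \cap J^-(x)$ a continuous real-valued function $f$ that is strictly monotone along past-directed causal segments — for instance the value of the time coordinate (the $H^\perp$-component in the notation of Section~\ref{Global hyperbolicity}), which strictly decreases as one moves into the past along any causal curve. Let $a_*$ be a point of $\partial\O \cap J^-(x)$ on which $f$ attains its minimum (this set is compact and nonempty, as it contains $a_0$). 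I claim $a_* \in I^-(x)$ and $a_*$ is photon-extremal. Indeed if $a_*$ were not photon-extremal, the argument of the previous paragraph gives a photon $\Delta$ through $a_*$ with $a_*$ in the relative interior of $\Delta \cap \overline{\O}$; the endpoint of this segment on the past side is a boundary point $a'$ with $f(a') < f(a_*)$, and one checks $a' \in J^-(x)$ (it lies on a past-directed causal continuation from a point of $\O \cap \overline{I^-(x)}$), contradicting minimality. Finally $a_* \in I^-(x)$ rather than merely $J^-(x)$: since $a_0 \in I^-(x)$ with $f(a_0)$ finite and $I^-(x)$ is open, and the minimizer inherits a strict time-gap from $x$, a short argument (or simply replacing $J^-$ by the compact slab $\{y : f(y) \le f(a_0)\} \cap J^-(x)$, whose intersection with $\partial\O$ still lies in $I^-(x)$ because any causal curve from $x$ reaching the $f(a_0)$-level must already be timelike) places $a_*$ in the open past.

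The step I expect to be the genuine obstacle is the verification that the ``push toward the past endpoint'' operation stays inside $I^-(x)$ and does not get stuck or escape, i.e.\ the bookkeeping of which endpoint of $\Delta \cap \overline{\O}$ is the relevant one and why it remains chronologically (not merely causally) below $x$; this requires using that $x$ is an interior point together with the fact that two distinct photons through a common point of $\mathbf{R}^{1,n-1}$ meet only at that point, so the ``near'' endpoint of each new segment is genuinely in $\O$. Everything else — compactness of $\partial\O \cap J^-(x)$, continuity and strict monotonicity of the time coordinate along causal curves, existence of a minimizer — is routine. Once this is set up, the minimizer is automatically photon-extremal and lies in $I^-(x)$, and the symmetric construction in $I^+(x)$ finishes the proof. \qed
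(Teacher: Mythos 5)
Your minimization argument does produce a photon-extremal point, but it can land in the wrong set: the minimizer $a_*$ of the time coordinate over $\partial\O\cap J^-(x)$ need not lie in the chronological past $I^-(x)$, which is exactly what the lemma asserts (and what is used afterwards, since $a_0$ and $b_0$ must be chronologically related for the diamond $\Diams(a_0,b_0)$ to be defined and to contain $x$). Both of your justifications for this last step fail. First, your $a_0$ was chosen on a past-directed \emph{lightlike} ray from $x$, so $a_0\in J^-(x)\smallsetminus I^-(x)$; it is not in $I^-(x)$. Second, the claim that ``any causal curve from $x$ reaching the $f(a_0)$-level must already be timelike'' is false: a lightlike ray from $x$ reaches every time level while staying on the lightcone, so the slab $\{f\le f(a_0)\}\cap J^-(x)$ always contains points of $J^-(x)\smallsetminus I^-(x)$. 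Concretely, in $\R^{1,1}$ with coordinates $(t,y)$, $x=(0,0)$ and $\O=\{(t,y)\mid -\epsilon+\delta y<t<\epsilon,\ |y|<1\}$ with $0<\delta<1$, the set $\partial\O\cap J^-(x)$ is an arc of the spacelike bottom edge, and the \emph{unique} minimizer of $t$ on it is the endpoint of that arc lying on the past lightcone of $x$; your construction therefore outputs a point outside $I^-(x)$, even though $I^-(x)\cap\partial\O$ does contain photon-extremal points. (The rest of your argument is fine: compactness of $\partial\O\cap J^-(x)$, the fact that the past endpoint of the component of $\operatorname{int}_\Delta(\Delta\cap\overline{\O})$ containing a non-extremal point is again a boundary point in $J^-(x)$ with strictly smaller time coordinate, hence extremality of the minimizer.)

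The gap is fixable, though not by the devices you propose: for instance, pick $x'\in\O\cap I^-(x)$ and minimize $f$ over $\partial\O\cap J^-(x')$; the same reasoning gives a photon-extremal point there, and the push-up property $J^-(x')\subset I^-(x)$ for $x'\in I^-(x)$ places it in $I^-(x)$. The paper avoids the issue by a different choice of function: instead of a linear time coordinate it maximizes the Lorentzian time separation from $x$, i.e.\ it takes the largest hyperbolic sheet $\mathbf{H}_{\lambda}=\{z\mid\b(z-x,z-x)=-\lambda^2\}\cap I^-(x)$ meeting $\overline{\O}$ and chooses $a_0\in\partial\O\cap\mathbf{H}_{\lambda_0}$. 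Since these level sets are asymptotic to the lightcone and $\lambda_0>0$, the point $a_0$ automatically lies in the open past $I^-(x)$, and one even gets the stronger property $J^-(a_0)\cap\overline{\O}=\{a_0\}$, which gives photon-extremality at once. The structural difference is that your level sets (spacelike hyperplanes) cross the lightcone of $x$, whereas the paper's are adapted to it.
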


\begin{proof}
    Let $x\in \O$ and write $\mathbf{H}_\lambda$ for the past hyperbolic sheet: $$\mathbf{H}_\lambda=\left\{z\in\R^{1,n-1}\,\vert\,\b(z-x,z-x)=-\lambda^2\right\}\cap I^-(x).$$ Since $\O$ is proper, the sheet $\mathbf{H}_\lambda$ is disjoint from $\O$ for sufficiently large $\lambda$. Let $\lambda_0>0$ be the smallest $\lambda$ with this property. Then $\mathbf{H}_{\lambda_0}$ must intersect $\overline{\O}$. Let $a_0 \in \partial \O\cap\mathbf{H}_{\lambda_0}$. The causal past $J^-(a_0)$ is contained in the past of $\mathbf{H}_{\lambda_0}$, so it intersects $\overline{\O}$ only at $\{a_0\}$. In particular $a_0$ is an extremal point in $\O$. An analogous proof shows that there exists an extremal point $b_0 \in I^+(x) \cap \partial \O$. $\qedhere$
\end{proof}

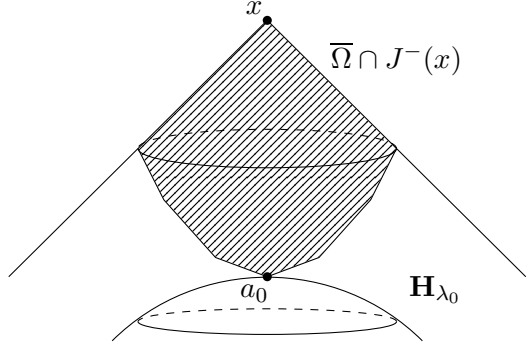
\begin{figure}
    \centering
    \begin{tikzpicture}[scale = 1.7]

          \draw [draw=white,fill, pattern = north east lines, opacity=0.5] (-1,-1) -- (-0.8,-1.4) -- (-0.4,-1.85) -- (0,-2) -- (0.4,-1.85) -- (0.8,-1.4) -- (1,-1) -- (0,0) --(-1,-1);
          \draw (-1,-1) -- (-0.8,-1.4) -- (-0.4,-1.85) -- (0,-2) -- (0.4,-1.85) -- (0.8,-1.4) -- (1,-1) ;
          \draw[dashed] (1,-1) arc (0:180:1 and 0.15);
          \draw (-1,-1) arc (180:360:1 and 0.15);
          \draw node at (1,-0.3) {$\overline{\O}\cap J^-(x)$};  
          \draw node at (-0.1,0.1) {$x$};
          \fill[fill=black] (0,0) circle (1pt);

          \draw[dashed] (1,-2.35) arc (0:180:1 and 0.1);
          \draw (-1,-2.35) arc (180:360:1 and 0.1);

            \draw    (-1.2,-2.5) to [out=45,in=180-45] (1.2,-2.5);

        \draw node at (-0.1,-2.13) {$a_0$};
          \fill (0,-2) circle (1pt);
          \draw node at (1.3,-2.1) {$\mathbf{H}_{\lambda_0}$};
        \draw (-2,-2) -- (0,0); 
          \draw (2,-2) -- (0,0);
        \end{tikzpicture}

    \caption{Existence of extremal points in the past of $x$.}
    \label{fig_extremal_points}
\end{figure}

We can now end the proof of Theorem~\ref{mainth} in the Lorentzian case. Let $\O$ be a proper almost-homogeneous domain of $\Ein^{1,n-1}$, and let us fix once and for all some affine chart $\mathbf{R}^{1,n-1}$ containing $\overline{\O}$. Let $x \in \O$ and let $a_0$ (resp.\ $b_0$) be an extremal point of $\O$ in the past of $x$ (resp.\ in the future), both provided by Lemma~\ref{existence_point_extremal}. Then one has $b_0 \in I^+(a_0)$ and one can consider the diamond $\Diams( a_0, b_0) \subset \mathbf{R}^{1,n-1}$, which contains $x$.  By Proposition \ref{extremal_implique_conedisjoint}, one has $C(a_0) \cap \O = C(b_0) \cap \O = \emptyset$. Since $x \in \Diams(a_0, b_0)$, the domain $\O$ is contained in $\Diams(a_0, b_0)$ by connectedness. Conversely, since $\O$ is causally convex by Propositions~\ref{propreetqhimpliquedc} and~\ref{dcimpliquecc}, one has $\Diams(a_0,b_0) \subset \O$. Hence $\O=\D(a_0, b_0)$. This ends the proof of Theorem~\ref{mainth} in the Lorentzian case.

\section{Proof of the main theorem in higher signature}\label{sect_proof_any_sign}

In this section, we give a proof of Theorem~\ref{mainth} in any signature. For this we use the projective model of the Einstein universe. 

\subsection{A projective lemma}\label{sect_proj_lem} In this section we prove Lemma~\ref{lem_proj_lem}. It is a necessary step to make the link between the pseudo-Riemannian geometry of $\Ein^{p,q}$ and convex geometry inside $\P(\R^{p+1,q+1})$.

First, let us fix some notations. Let $\O\subset\Ein^{p,q}$ be a proper domain: there exists $\xi_0 \in \Ein^{p,q}$ such that $C(\xi_0) \cap \overline{\O} = \emptyset$. Then $\overline{\O} \cap \xi_0^{\perp} = \emptyset$, which means that $\O$ is proper in~$\mathbf{P}(\mathbf{R}^{p+1,q+1})$. Since it is connected, one can lift it to a proper cone $\widetilde{\O} \subset \mathbf{R}^{p+1,q+1}$. Then we define $\widetilde{\mathcal{O}}_{\O} := \mathrm{Conv}(\widetilde{\O})$ its convex hull in $\mathbf{R}^{p+q+2}$, which is a proper convex cone of $\mathbf{R}^{p+q+2}$, a priori not necessarily open. We define $\mathcal{O}_{\O} := \mathbf{P}(\widetilde{\mathcal{O}}_{\O})$. It is a properly convex subset of $\mathbf{P}(\mathbf{R}^{p+1,q+1})$, and it does not depend on the choice of affine chart containing $\overline{\O}$. In particular, $\mathcal{O}_{\O}$ is $\Conf(\O)$-invariant. More precisely, for all $g \in \Conf(\O)$, there exists a unique lift $\widetilde{g} \in \operatorname{O}(p+1,q+1)$ that preserves $\widetilde{\O}$ and hence~$\widetilde{\mathcal{O}}_{\O}$. Each time we will write $\widetilde{g}$ for $g \in \Conf(\O)$ or $\widetilde{\mathcal{O}}_{\O}$ it will be by means of a chosen lift $\widetilde{\O}$ of $\O$.

\begin{lem}\label{lem_proj_lem}
    Let $\O\subset\Ein^{p,q}$ be a proper domain. If $p\geq 1$, then $\mathcal{O}_{\O}$ is a proper domain of $\mathbf{P}(\R^{p+1,q+1})$. 
\end{lem}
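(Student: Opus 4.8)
The plan is to show that $\mathcal{O}_\O = \mathbf{P}(\mathrm{Conv}(\widetilde{\O}))$ is open, and that it is still proper (i.e.\ contained in an affine chart), assuming $p \geq 1$. Properness of $\mathcal{O}_\O$ is essentially built into the construction: since $\widetilde{\mathcal{O}}_\O = \mathrm{Conv}(\widetilde{\O})$ and $\widetilde{\O}$ is a proper cone (its projection $\O$ misses $C(\xi_0)$, hence $\overline{\O} \cap \xi_0^\perp = \emptyset$), the convex hull of $\widetilde{\O}$ is contained in the open half-space on one side of the hyperplane $\xi_0^\perp$ — more carefully, one picks a linear functional $\varphi$ positive on $\widetilde{\O}$, then $\varphi > 0$ on $\mathrm{Conv}(\widetilde{\O}) \smallsetminus \{0\}$, so $\widetilde{\mathcal{O}}_\O$ is a proper (salient) convex cone and $\mathcal{O}_\O$ lies in the affine chart $\{\varphi = 1\}$ as a bounded convex set. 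So the only real content is showing that $\mathcal{O}_\O$ has nonempty interior, equivalently that $\widetilde{\mathcal{O}}_\O$ is solid in $\mathbf{R}^{p+q+2}$, equivalently that $\widetilde{\O}$ (or $\O$) is not contained in any proper projective subspace.

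The key geometric input is that $p \geq 1$ forces the Einstein universe — and in fact any domain of it — to ``spread out'' in enough directions. Here is the mechanism I would use: take a photon $\Delta$ meeting $\O$, say $\Delta = \pi(\Pi)$ for a totally isotropic $2$-plane $\Pi$. The segment $\Delta \cap \O$ is a nontrivial open arc, so it spans a $2$-dimensional subspace; thus $\widetilde{\O}$ is not contained in a line. More substantially, I want to produce, through a point $x \in \O$, photons in ``many independent directions.'' The lightcone $C(x)$ through $x$ is foliated by photons, and when $p \geq 1$ there is a whole $\Ein^{p-1,q-1}$'s worth of them (recall $C(x)\smallsetminus\{x\}$ is conformal to $\R \times \Ein^{p-1,q-1}$); since $\O$ is open, for points $x'$ near $x$ the photons through $x'$ ``close to'' those through $x$ still meet $\O$ in nontrivial arcs. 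Concretely: suppose for contradiction that $\widetilde{\O}$ spans a proper subspace $W \subsetneq \mathbf{R}^{p+q+2}$. Pick $x = \mathbf{P}(u) \in \O$ with $u \in W$. For any photon $\Delta$ through $x$ meeting $\O$ in an arc around $x$, the $2$-plane $\Pi$ directing $\Delta$ satisfies $\Pi \subset W$ (the arc spans $\Pi$ and lies in $\O$, hence in $\mathbf{P}(W)$). So all totally isotropic $2$-planes $\Pi$ with $u \in \Pi$ and $\Delta_\Pi \cap \O \ni x$ are contained in $W$. But by openness of $\O$, the set of such $\Pi$ is an open subset of the variety of isotropic $2$-planes through $u$ (this variety is nonempty and positive-dimensional precisely when $p \geq 1$, since it is parametrized by isotropic lines in $u^\perp/u$, a space of signature $(p-1, q-1)$ — wait, $(p, q)$ on $u^\perp / \langle u\rangle$ up to sign; the relevant point is it contains isotropic lines as long as both a minus and plus sign survive, i.e.\ $p \geq 1$). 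The union of all $2$-planes $\Pi$ in such an open set spans $u^\perp$, which has dimension $p+q+1 > \dim$ of any proper subspace unless $W \supseteq u^\perp$; and then adding any one more photon through a nearby point escapes $u^\perp$. This contradicts $\widetilde{\O} \subset W$.

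Let me restructure the contradiction more cleanly, since the above is a bit tangled. I would argue: let $W = \mathrm{span}(\widetilde{\O})$ and suppose $W \neq \mathbf{R}^{p+q+2}$. Fix $x = \mathbf{P}(u) \in \O$. Every photon-segment through $x$ inside $\O$ lies in $\mathbf{P}(W)$, so spans a $2$-plane in $W$ containing $u$; as $\O$ is open, these $2$-planes fill an open set of isotropic $2$-planes through $u$, and (since $p \geq 1$ guarantees the space of isotropic lines in $u^\perp/\langle u \rangle$, of signature roughly $(p-1,q-1)$ — nonempty iff $p \geq 1$, using $q \geq 1$ too) their union spans $u^\perp$. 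Hence $u^\perp \subseteq W$, so $W = u^\perp$ is a hyperplane. Now take $x' = \mathbf{P}(u') \in \O$ close to $x$ but with $u' \notin u^\perp$ (possible since $\O$ is open and $u^\perp$ has empty interior) — contradiction with $\widetilde{\O} \subset W = u^\perp$. Therefore $W = \mathbf{R}^{p+q+2}$, $\widetilde{\mathcal{O}}_\O$ is solid, $\mathcal{O}_\O$ has nonempty interior, and combined with properness from the first paragraph, $\mathcal{O}_\O$ is a proper domain of $\mathbf{P}(\R^{p+1,q+1})$.

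The main obstacle I anticipate is making rigorous the claim that the isotropic $2$-planes through a fixed point $u$ that additionally meet $\O$ in an arc ``span $u^\perp$'': this requires knowing the variety of isotropic lines in the quadratic space $u^\perp/\langle u\rangle$ (of signature $(p,q)$ with one null direction quotiented, effectively signature $(p,q)$ degenerate — care needed) is nonempty and irreducible of the right dimension exactly when $p \geq 1$, and that an open subset of it is not contained in any hyperplane's worth of lines. This is where the hypothesis $p \geq 1$ enters decisively: if $p = 0$, the Einstein universe $\Ein^{0,q} = \S^q$ carries no photons at all and a small ball is a proper domain of $\P(\R^{1,q+1})$ only after passing to convex hull in the wrong way — indeed the statement genuinely fails for $p=0$. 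I would isolate this spanning fact as a short sublemma about quadratic spaces and prove it by an explicit computation in a standard hyperbolic basis.
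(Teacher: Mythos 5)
There is a genuine gap, and it sits exactly where you locate the ``only real content.'' You reduce openness of $\mathcal{O}_{\O}$ to the claim that $\widetilde{\O}$ spans $\R^{p+q+2}$, i.e.\ that $\widetilde{\mathcal{O}}_{\O}$ has nonempty interior. But nonempty interior is not equivalent to openness here: $\widetilde{\O}$ is the cone over an open subset of $\Ein^{p,q}$, which is a codimension-one quadric in $\P(\R^{p+1,q+1})$, so $\widetilde{\O}$ is itself not open in $\R^{p+q+2}$, and the convex hull of a non-open set can be full-dimensional without being open (compare the convex hull of a sphere, or of a piece of a paraboloid). The case $p=0$ shows your proposed equivalence cannot be the right use of the hypothesis: a spherical cap $\O\subset\S^{q}=\Ein^{0,q}$ lifts to a cone that does span $\R^{q+2}$ (an affine function vanishing on an open subset of the round sphere vanishes identically), yet every point of $\O$ lies on the boundary of $\mathcal{O}_{\O}$, which is exactly the failure recorded in the paper's remark that for $p=0$ the convex hull need not be open. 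So your argument, even if the spanning sublemma about isotropic $2$-planes through $u$ were carried out, proves a statement that holds for all $p\geq 0$ and therefore cannot yield the lemma.

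What is actually needed, and what the paper proves, is the stronger statement that every point $z\in\O$ lies in the \emph{interior} of $\mathcal{O}_{\O}$; then $\mathcal{O}_{\O}$ contains an open set $U\supset\O$, hence $\mathcal{O}_{\O}=\mathrm{Conv}(U)$ is open (the convex hull of an open set is open). The paper does this by shrinking $\O$ to a diamond $\Diams^{p,q}$, using homogeneity to normalize $z$, and exhibiting explicit isotropic vectors $a_i^{\pm},b_j^{\pm}\in\widetilde{\O}$ that span $\R^{p+1,q+1}$ and of which a lift $u$ of $z$ is a barycenter with strictly positive coefficients; being a nontrivial barycenter of a spanning family of points of $\widetilde{\O}$ puts $u$ in the interior of the hull. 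This is where $p\geq 1$ (together with $q\geq 1$) genuinely enters: one needs points of $\widetilde{\O}$ on ``both sides'' of $z$ in both the negative and positive factors of the diamond $\H^p\times\H^q$, which is impossible when $p=0$. Your properness paragraph is fine, but to repair the proof you would need to replace the spanning argument by an argument producing such a positive barycentric expression (or otherwise showing $z\in\operatorname{int}\mathcal{O}_{\O}$), not merely full-dimensionality of the hull.
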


\begin{rmk}
    If $p=0$, then the convex hull of $\O$ need not be open.
\end{rmk}

\begin{proof} We already know that $\mathcal{O}_{\O}$ is proper. To prove that $\mathcal{O}_{\O}$ is open, we show that it is a neighborhood of $\O$ in $\mathbf{P}(\R^{p+1,q+1})$: this implies that there is an open set $\O\subset U\subset \mathcal{O}_{\O}$, so in particular $\mathcal{O}_{\O}=\mathrm{Conv}(U)$ is open in $\mathbf{P}(\R^{p+1,q+1})$. Given $z\in \O$, we want to find a neighborhood of $z$ contained in $\mathcal{O}_{\O}$. We can always make $\O$ smaller and assume that $\O=\Diams^{p,q}$. Since diamonds are homogeneous, we can always make a specific choice for $z$. We decompose $\R^{p+1,q+1}=\R^{p,1}\oplus\R^{1,q}$ such that 
    $$\O=\left\{\mathbf{P}(v+w)\,\left\vert\,
         \begin{tabular}{lll}
         $v\in \R^{p,1}$,& $\b(v,v)=+1$,& $v_{p+1}>0$  \\
         $w\in \R^{1,q}$,& $\b(w,w)=-1$,& $w_{q+1}>0$ 
    \end{tabular}\right.\right\}.$$
    Here $v_{p+1}$ (resp.\ $w_{q+1}$) stands for the coordinates of $v$ (resp.\ $w$) in the decomposition $\R^{p,1}=\langle e_1,\dots,e_{p+1} \rangle$ (resp.\ $\R^{1,q}=\langle f_1,\dots,f_{q+1}\rangle$) for which the bilinear form $\b$ is written $\b(v,v)=-(v_1)^2-\dots-(v_p)^2+(v_{p+1})^2$ (resp.\ $\b(w,w)=(w_1)^2+\dots+(w_p)^2-(w_{q+1})^2$). We let $u=e_{p+1}+f_{q+1}\in\O$ and 
    $$\left\lbrace\begin{tabular}{rcll}
        $a_i^{\pm}$ & $=$ & $f_{q+1}+\sqrt{2}e_{p+1}\pm e_i\in\widetilde{\O}$, & $(i=1,\dots,p)$, \\
        $b_j^{\pm}$ & $=$ & $e_{p+1}+\sqrt{2}f_{q+1}\pm f_j\in\widetilde{\O}$, & $(j=1,\dots,q)$.
    \end{tabular}\right.$$
    One can check that $\mathcal{S}=\{a_i^{\pm},b_j^{\pm}\}_{i,j}$ generates $\R^{p+1,q+1}$ and that $u=\frac{1}{(1+\sqrt{2})2p}\sum_ia_i^{\pm}+\frac{1}{(1+\sqrt{2})2q}\sum_jb_j^{\pm}$. Since $u$ is a nontrivial barycenter of $\mathcal{S}$, it belongs to the interior of $\mathrm{Conv}(\mathcal{S})$. Hence the point $z := \mathbf{P}(u)$ is in the interior of $\mathbf{P}(\mathcal{S}) \subset \mathcal{O}_{\O}$. $\qedhere$
\end{proof}

\begin{center}
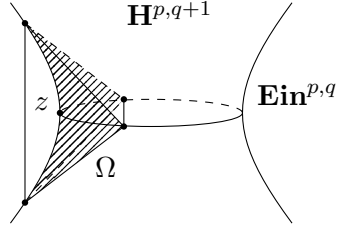
\begin{figure}
    \centering
        \begin{tikzpicture}[scale=1.2]
    \pgfmathsetmacro{\e}{1.44022}
    \pgfmathsetmacro{\a}{1}
    \pgfmathsetmacro{\b}{(\a*sqrt((\e)^2-1)} 

    \fill[fill, pattern = north east lines, opacity=0.5] ({-\a*cosh(+0.85)},{\b*sinh(+0.85)}) -- ({-\a*cosh(-0.85)},{\b*sinh(-0.85)}) -- (-0.3,-0.15);
    \fill[fill, pattern = north east lines, opacity=0.5] ({-\a*cosh(+0.85)},{\b*sinh(+0.85)}) -- ({-\a*cosh(-0.85)},{\b*sinh(-0.85)}) -- (-0.3,+0.15);
    \fill[white] plot[domain=-1:1] ({-\a*cosh(\x)},{\b*sinh(\x)});

    \draw plot[domain=-1:1] ({\a*cosh(\x)},{\b*sinh(\x)});
    \draw plot[domain=-1:1] ({-\a*cosh(\x)},{\b*sinh(\x)});
    \draw[dashed] (1,0) arc (0:180:1 and 0.15);
    \draw (-1,0) arc (180:360:1 and 0.15);
    \draw node at (0.2,1.1) {$\H^{p,q+1}$};  
    \draw node at (1.6,0.2) {$\Ein^{p,q}$};

    \draw (-0.3,0.15) -- (-0.3,-0.15); 
    \draw ({-\a*cosh(-0.85)},{\b*sinh(-0.85)}) -- ({-\a*cosh(0.85)},{\b*sinh(0.85)}); 
    \draw ({-\a*cosh(-0.85)},{\b*sinh(-0.85)}) -- (-0.3,-0.15);
    \draw[dashed] ({-\a*cosh(-0.85)},{\b*sinh(-0.85)}) -- (-0.3,+0.15);
    \draw ({-\a*cosh(+0.85)},{\b*sinh(+0.85)}) -- (-0.3,-0.15);
    \draw[dashed] ({-\a*cosh(+0.85)},{\b*sinh(+0.85)}) -- (-0.3,+0.15);
    
    \fill[fill=black] ({-\a*cosh(-0.85)},{\b*sinh(-0.85)}) circle (1pt);
    \fill[fill=black] ({-\a*cosh(+0.85)},{\b*sinh(+0.85)}) circle (1pt);
    \fill[fill=black] (-0.3,-0.15) circle (1pt);
    \fill[fill=black] (-0.3,0.15) circle (1pt);

    \fill[fill=black] (-1,0) circle (1pt);
    \draw node at (-1.2,0.1) {$z$}; 
    \draw node at (-0.5,-0.6) {$\O$};

\end{tikzpicture}
    \caption{The convex hull of $\O$ contains $z$ in its interior.}
    \label{convex_hull}
\end{figure}
\end{center}

\subsection{Spacelike and timelike-extremal points}
\label{spacelike_timelike_etremal} Photon-extremal points of a proper domain of $\Ein^{p,q}$ have been defined in Section \ref{sect_extr_are_visual}. We will see in the present section that almost-homogeneity makes it possible to separate the set of photon-extremal points into two $\Conf(\O)$-invariant families: namely, the one \emph{spacelike-extremal} points and \emph{timelike-extremal} points. 

We first check that $\Extr(\O)$ is generic, in the sense that lines in $\R^{p+q+2}$ corresponding to photon-extremal points of a proper domain $\O$ of $\Ein^{p,q}$ generate $\mathbf{R}^{p+q+2}$ as a vector space:

\begin{lem}\label{exist_points_extremaux_pq} 
    The extremal points (in the projective sense) of $\mathcal{O}_{\O}$ are all photon-extremal points of $\O$. In particular, there exist $x_1,...,x_{p+q+2} \in \Extr (\O)$ such that $\mathbf{P}(\mathbf{R}^{p+1,q+1}) = x_1 \oplus... \oplus x_{p+q+2}$.
\end{lem}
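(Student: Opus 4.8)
The plan is to prove the two assertions separately. For the first assertion, I would take an extremal point $x$ of the properly convex set $\mathcal{O}_{\O}$ (in the usual projective-geometric sense: $x\in\partial\mathcal{O}_{\O}$ and $x$ is not in the interior of any segment contained in $\overline{\mathcal{O}_{\O}}$) and show that $x\in\Extr(\O)$, i.e.\ $x\in\partial\O$ and $x$ is photon-extremal. The key point is the relation between $\O$, its convex hull $\mathcal{O}_{\O}$, and photons: a photon $\Delta$ through a point of $\Ein^{p,q}$ lifts to (the projectivization of) a totally isotropic $2$-plane $\Pi\subset\R^{p+1,q+1}$, and the cone over $\Delta$ is exactly $\Pi$, which is already convex. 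So if $x\in\Delta$ is in the relative interior of $\Delta\cap\overline{\O}$ in $\Delta$, then $x$ lies in the relative interior of a nondegenerate segment contained in $\overline{\widetilde{\O}}\subset\widetilde{\mathcal{O}}_{\O}$, contradicting extremality of $x$ in the convex cone $\widetilde{\mathcal{O}}_{\O}$. This forces $x$ to be photon-extremal. One must also check $x\in\partial\O$: since $\Extr(\O)\subset\partial\O$ by definition, it suffices to see that an extremal point of $\mathcal{O}_{\O}$ lies in $\overline{\O}$ but not in $\O$. It lies in $\overline{\O}$ because $\overline{\mathcal{O}_{\O}}=\overline{\mathrm{Conv}(\widetilde{\O})}$ and the extreme points of a compact convex set lie in the closure of any generating set (Milman's converse to Krein--Milman), and it is not in $\O$ because $\O$ is open and contained in the \emph{open} set $\mathcal{O}_{\O}$ (here is exactly where the hypothesis $p\geq 1$, via Lemma~\ref{lem_proj_lem}, enters), so no point of $\O$ can be extremal.

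For the second assertion, I would argue that the extreme points of $\mathcal{O}_{\O}$ cannot all lie in a proper projective subspace. Indeed, by Krein--Milman applied to the compact convex cross-section of $\widetilde{\mathcal{O}}_{\O}$ (well-defined since $\mathcal{O}_{\O}$ is properly convex, so $\widetilde{\mathcal{O}}_{\O}$ meets a suitable affine hyperplane in a compact convex body), $\mathcal{O}_{\O}$ is the closed convex hull of its extreme points; if all of them lay in a hyperplane $\mathbf{P}(W)$ with $W\subsetneq\R^{p+q+2}$, then $\mathcal{O}_{\O}\subset\mathbf{P}(W)$ would have empty interior, contradicting Lemma~\ref{lem_proj_lem}. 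Hence the lines spanned by extreme points of $\mathcal{O}_{\O}$ span $\R^{p+q+2}$, so one can extract a basis $x_1,\dots,x_{p+q+2}$ from among them, and by the first assertion each $x_i\in\Extr(\O)$. This gives the claimed direct sum decomposition $\mathbf{P}(\R^{p+1,q+1})=x_1\oplus\dots\oplus x_{p+q+2}$.

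I expect the main obstacle to be the careful bookkeeping at the boundary in the first assertion: translating ``extremal in the projective/convex sense for $\mathcal{O}_{\O}$'' into ``photon-extremal for $\O$'', which requires knowing precisely that the only segments through a boundary point of $\O$ that can sit inside $\overline{\mathcal{O}_{\O}}$ and pass through an extreme point are the ones lying along photons — this uses the structural fact that cones over photons are exactly the totally isotropic $2$-planes, hence convex, together with the fact that $\overline{\mathcal{O}_{\O}}=\mathrm{Conv}(\overline{\widetilde{\O}})$ in the relevant compact cross-section. The rest (Krein--Milman, Milman's converse, and extracting a basis) is standard convex geometry once Lemma~\ref{lem_proj_lem} is in hand.
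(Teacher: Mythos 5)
Your argument is correct and essentially the paper's: you identify extremal points of $\mathcal{O}_{\O}$ as points of $\overline{\O}$ (Milman's converse) and rule out interior points of $\operatorname{int}_\Delta(\Delta\cap\overline{\O})$ along a photon $\Delta$ by producing a segment of $\overline{\mathcal{O}_{\O}}$ through the point, exactly as in the paper (your route to $a\notin\O$ via openness of $\mathcal{O}_{\O}$ from Lemma~\ref{lem_proj_lem} is a harmless variant of the paper's photon-segment argument). Your explicit Krein--Milman/spanning step for the ``in particular'' clause is the argument the paper leaves implicit, so nothing essential differs.
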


\begin{proof}
    Let $a \in \partial \mathcal{O}_{\O}$ be an extremal point. By definition of the convex hull, the point $a$ must lie in $\overline{\O}$. If it is in $\O$, then for any photon $\Delta$ passing through $a$, the point $a$ lies in the interior of a connected component of $\overline{\O} \cap \Delta$, which is an open interval of projective line seen in $\mathbf{P}(\mathbf{R}^{p+1, q+1})$, contained in $\overline{\mathcal{O}_{\O}}$. This is impossible since $a$ is an extremal point of $\mathcal{O}_{\O}$. This proves that $a \in \partial \O$. Similarly, if $\Delta$ is a photon, the point $a$ cannot lie on a nontrivial connected component of $\operatorname{int}_\Delta(\overline{\O} \cap \Delta)$, otherwise it would lie in the nontrivial interval of $\operatorname{int}_\Delta (\overline{\mathcal{O}_{\O}} \cap \Delta)$. Hence $a\in\partial\O $ is photon-extremal. $\qedhere$
\end{proof}

From now on, we assume that $\O$ is a proper almost-homogeneous domain of $\Ein^{p,q}$. We fix an affine chart $\mathbf{R}^{p,q}$ containing $\overline{\O}$. For any point $x\in \R^{p,q}$, we write $$I(x)=\{y\in\R^{p,q}\,\vert\,y-x\text{ is timelike vector}\},$$ and $J(x)=\overline{I(x)}\subset\R^{p,q}$. Let $a\in\Extr(\O)$ be a photon-extremal point of $\O$. By Proposition~\ref{extremal_implique_conedisjoint}, one has $C(a) \cap \O = \emptyset$. Since $\O$ is connected, one has $\O \subset I(a)$, in which case we say that $a$ is a \emph{timelike-extremal point}, or $\O \subset \R^{p,q}\smallsetminus J(a)$, in which case we say that $a$ is a \emph{spacelike-extremal point}. We denote by $E^+(\O)$ (resp.\ $E^-(\O)$) the set of spacelike (resp.\ timelike) extremal points of $\O$.

\begin{lem}\label{prop_des_pts_extremaux_espace_et_temps}
The sets $E^+(\O)$ and $E^-(\O)$ satisfy the following properties:
\begin{enumerate}
    \item They are $\Conf(\O)$-invariant.
    \item They are both nonempty.
    \item If $a \in E^+(\O)$ and $b \in E^-(\O)$, then $a \in C(b)$.
\end{enumerate}
\end{lem}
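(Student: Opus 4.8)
The statement asserts three things about the sets $E^+(\O)$ and $E^-(\O)$ of spacelike- and timelike-extremal points: they are $\Conf(\O)$-invariant, both nonempty, and mutually ``lightcone-related'' in the sense of (3). I would prove them in the order (1), (3), (2), since (2) is the one requiring real work and the first two go quickly.

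\emph{Part (1).} Invariance is essentially formal. The set $\Extr(\O)$ of photon-extremal points is $\Conf(\O)$-invariant because photons, $\overline{\O}$, and interiors-within-photons are all preserved by conformal maps (which permute photons by Liouville's theorem). The splitting of $\Extr(\O)$ into $E^+$ and $E^-$ is characterized by a condition that is again conformally natural once phrased correctly: by Proposition~\ref{extremal_implique_conedisjoint}, for $a \in \Extr(\O)$ the set $\O$ is disjoint from $C(a)$, hence lies in a single connected component of $\Ein^{p,q} \smallsetminus C(a)$, and $a \in E^+(\O)$ precisely when this component is ``spacelike-related'' to $a$. Since $\Ein^{p,q}\smallsetminus(C(a)\cup C(ga))$ correspondence is respected by $g\in\Conf(\O)$, and $g$ carries $\O$ to $\O$, the type (spacelike or timelike) of $a$ equals the type of $ga$. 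The only subtlety is to make sure the spacelike/timelike dichotomy is intrinsic (independent of the chosen affine chart $\R^{p,q}\supset\overline{\O}$), which follows because $\O$ connected and disjoint from $C(a)$ forces it into one component of $C(a)^c$, and the component containing $\O$ is determined by $a$ and $\O$ alone.

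\emph{Part (3).} This is the key geometric input and I would do it before (2), because (2) will lean on the dynamics of contracting sequences and I want the structural constraint available. Suppose $a\in E^+(\O)$ and $b\in E^-(\O)$. Since $\O\subset I(b)$ (timelike future/past cone of $b$) and $\O\subset\R^{p,q}\smallsetminus J(a)$, and since $\overline{\O}$ contains both $a$ and $b$ (extremal points lie on $\partial\O\subset\overline{\O}$), I would approach it by taking sequences $x_k\to a$ and $y_k\to b$ in $\O$ and analyzing the causal relation. Actually the cleanest route: if $a\notin C(b)$, then $a$ and $b$ are not on a common photon, so $a$ lies either in $I(b)$, or in $J(b)\smallsetminus I(b)$ — no: $a\notin C(b)$ means $b-a$ is either timelike or spacelike (in an affine chart where both are finite; if one is at infinity argue via the projective model). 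If $b-a$ is spacelike, then a neighborhood of $a$ in $\overline\O$ meets $\R^{p,q}\smallsetminus J(b)$, contradicting $\O\subset I(b)$; if $b-a$ is timelike, a neighborhood of $b$ meets $J(a)$, contradicting $\O\subset\R^{p,q}\smallsetminus J(a)$. Either way we get a contradiction unless $a\in C(b)$. (The case where $a$ or $b$ sits on $C(\infty)$ of the chosen chart is handled by choosing a different chart — possible since there are only finitely many extremal points to avoid, or rather since we may pick the chart to contain $\overline{\O}$ and then $a,b\in\overline{\O}\subset\R^{p,q}$.)

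\emph{Part (2).} This is the main obstacle. One must show both $E^+(\O)$ and $E^-(\O)$ are nonempty. By Lemma~\ref{exist_points_extremaux_pq}, $\Extr(\O)\neq\emptyset$ and in fact spans $\R^{p+q+2}$, so $\Extr(\O)=E^+(\O)\sqcup E^-(\O)$ is nonempty; the issue is ruling out that one of the two pieces is empty. Suppose for contradiction that $E^-(\O)=\emptyset$, i.e.\ every photon-extremal point is spacelike-extremal: then $\O\subset\bigcap_{a\in E^+(\O)}(\R^{p,q}\smallsetminus J(a))$. I expect the contradiction to come from the interaction of quasi-homogeneity with this one-sidedness. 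The idea: use a contracting sequence $g_k\to$ (const $=a_0$) obtained as in the proof of Proposition~\ref{extremal_implique_conedisjoint} (via quasi-homogeneity and Fact~\ref{fact_dyna}), where $a_0$ is some photon-extremal point. If all extremal points are spacelike, then $\O$ lies in $\R^{p,q}\smallsetminus J(a_0)$, a spacelike slab-like region — but $\O$ must also be ``balanced'' under $\Conf(\O)$-translates because it is quasi-homogeneous, and one should derive that $\O$ then also accumulates on a timelike-extremal point, perhaps by looking at the convex hull $\mathcal{O}_\O\subset\P(\R^{p+1,q+1})$ from Lemma~\ref{lem_proj_lem}: its extremal points are all photon-extremal (Lemma~\ref{exist_points_extremaux_pq}), they span, and a properly convex open set whose extremal points are all of one causal ``type'' relative to every interior point would force a degenerate signature. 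Concretely I would examine the signature of the span of $E^+(\O)$: if $E^-(\O)=\emptyset$ then $\R^{p+q+2}$ is spanned by vectors all lying (say) in the closure of a single spacelike-type region, and I would extract from this that the form $\b$ restricted to a suitable subspace is semidefinite, contradicting $p+1\geq 2$ and $q+1\geq 2$. Making this last signature argument precise — deciding exactly which quadratic-form contradiction to invoke and verifying the spanning vectors really do confine $\b$ — is the part I anticipate requiring the most care, and it is plausibly where the hypothesis $p,q\geq 1$ (both factors nontrivial) enters crucially.
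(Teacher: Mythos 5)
Your part (3) is fine and is essentially the paper's argument (the paper phrases it directly: $a\in\partial\O\subset\overline{I(b)}=J(b)$ and $b\in\partial\O\subset\overline{\R^{p,q}\smallsetminus J(a)}$, hence $a\in J(b)\smallsetminus I(b)=C(b)$). Your part (1), however, rests on a claim that is vacuous: $\Ein^{p,q}\smallsetminus C(a)$ is \emph{connected} (it is an affine chart), so ``the connected component of $\Ein^{p,q}\smallsetminus C(a)$ containing $\O$'' carries no information, and the spacelike/timelike dichotomy only exists inside the fixed chart $\R^{p,q}\supset\overline{\O}$ --- which is precisely not $\Conf(\O)$-invariant (indeed $\PO(p+1,q+1)$ acts transitively on non-incident pairs, so the global timelike/spacelike relation between two points is chart-dependent). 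The paper resolves this by a \emph{local} chart-free characterization at $a$: $a$ is of timelike type if and only if every neighbourhood $U$ of $a$ meets $\O$ in points timelike-related to $a$ within $U$; since the infinitesimal causal character is conformally invariant, this germ condition is $\Conf(\O)$-invariant and, because either $\O\subset I(a)$ or $\O\cap J(a)=\emptyset$, it separates the two types. Your proposal does not supply such a criterion.

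The substantive gap is part (2), which you leave open, and the route you indicate would not close it. The extremal points only span $\R^{p+q+2}$ as a \emph{convex cone}, and a nondegenerate form of signature $(p+1,q+1)$ can perfectly well keep a constant sign on an open properly convex cone; so ``spanning vectors all of one causal type, hence $\b$ semidefinite on a subspace'' yields no contradiction. The paper's argument is different in its key step and uses no dynamics: assume, say, $E^+(\O)=\emptyset$; by Lemma~\ref{exist_points_extremaux_pq} every extremal point of $\mathcal{O}_{\O}$ is a photon-extremal, hence timelike-extremal, point of $\O$, and for any two lifts $a,b\in\partial\widetilde{\mathcal{O}}_{\O}\smallsetminus\{0\}$ of such points the \emph{pairwise} quantity $\b(a,b)$ has a fixed sign (this is exactly the statement $\partial\O\subset J(x)$ read in the projective model). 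By bilinearity this pairwise sign condition propagates from the extremal rays to all of $\overline{\widetilde{\mathcal{O}}_{\O}}$, in particular to all lifts of points of $\O$ (Lemma~\ref{lem_proj_lem}), which translated back into the chart says $\O\subset\overline{I(x)}$ for every $x\in\O$; this contradicts openness of $\O$, since any nonempty open set of a signature-$(p,q)$ manifold with $p,q\ge1$ contains spacelike-related pairs. The missing idea in your sketch is thus the off-diagonal sign estimate on the convex hull cone and its bilinear propagation, not a signature statement about a linear span, and the contracting-sequence machinery you invoke is unnecessary here.
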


\begin{proof}
    Since the definition of timelike (resp.\ spacelike) extremal points depends of the affine chart, it is not clear that it is $\Conf(\O)$-invariant, as the affine chart $\mathbf{R}^{p,q}$ need not be $\Conf(\O)$-invariant. However, one can write 
    $$E^+(\O)=\left\{a\in\Extr(\O)\,\vert\, \forall U\subset\Ein^{p,q}\text{ neighbourhood of }a,\,I(a)\cap U\cap\O\neq\emptyset\right\},$$
    which is a chart-free expression invariant under the conformal group of $\O$. Similarly the set $E^-(\O)$ is invariant. This proves $1.$

    Let us prove 2. Assume for example that $E^+(\O) = \emptyset$. Then for any two points $a, b \in \partial\widetilde{\mathcal{O}}_{\O} \smallsetminus  \{ 0 \}$ such that $\mathbf{P}(a), \mathbf{P}(b)$ are extremal points of in $\mathcal{O}_{\O}$, one has $\b(a,b) \leq 0$, by definition of $E^-(\O)$. By bilinearity of $\b$ and by definition of $\widetilde{\mathcal{O}}_{\O}$, one has $\b(v,w) \leq 0$ for all $v,w \in \overline{\widetilde{\mathcal{O}}_{\O}}$. By Lemma~\ref{lem_proj_lem}, this is in particular true for all $v,w \in \widetilde{\O}$, which means that for all $x \in \O$ one has $\O \subset \overline{I(x)}$. This is impossible by openness of $\O$ in~$\Ein^{p,q}$. Hence $E^+(\O) \ne \emptyset$, and similarly $E^-(\O) \ne \emptyset$.

    To prove 3, assume $a \in E^+(\O)$ and $b \in E^-(\O)$. Since $a\in\partial\O$ and $\O \subset I(b)$, we get $a\in J(b)$. Similarly $a\in\partial\O$ and $\O \subset \R^{p,q}\smallsetminus J(a)$, so that $b\in \R^{p,q}\smallsetminus I(a)$ or equivalently $a\in \R^{p,q}\smallsetminus I(b)$. Therefore $a \in C(b)$.$\qedhere$
\end{proof}
\subsection{End of the proof of Theorem~\ref{mainth}} \label{sect_end_mainth} We can now complete the proof of Theorem \ref{mainth} in general signature. We will make use of the following lemma, which is already proved in \cite[Lem.\ 3.3]{DGKproj} (and stated with a discreteness assumption which is not necessary). We give its proof for the reader's convenience.

\begin{lem}\label{limit_set_inclus_dans_H} Let $V$ be a finite-dimensional real vector space. We fix $\Vert .\Vert $ any norm on $V$. Let $U \subset V$ be a properly convex open cone, and let $H \leq \operatorname{SL}(V)$ be a subgroup preserving $U$. Let $v \in U$ and $(h_k) \in H^{\mathbf{N}}$ such that there exists $a \in \partial \mathbf{P}(U)$ satisfying $\mathbf{P}(h_k \cdot v) \rightarrow a$. Then  $\Vert h_k\cdot v \Vert  \rightarrow + \infty$.
\end{lem}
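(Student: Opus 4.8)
The statement is a standard fact about proper convex cones and divergent sequences; the idea is to argue by contradiction using the extremality of $a$ together with the convexity of $\mathbf{P}(U)$. First I would suppose for contradiction that $\Vert h_k \cdot v\Vert$ does not tend to $+\infty$. After passing to a subsequence, we may then assume that $h_k \cdot v$ stays bounded, and hence (again after extraction) that $h_k \cdot v \to w$ for some $w \in V$. Since $\mathbf{P}(h_k \cdot v) \to a \in \partial\mathbf{P}(U)$, the limit $w$ is nonzero (otherwise $\mathbf{P}(h_k\cdot v)$ could not converge in $\mathbf{P}(V)$, or at least we arrange $w\neq 0$ by the boundedness-from-below coming from $a$ being a genuine point of projective space) and satisfies $\mathbf{P}(w) = a$, so $w \in \overline{U}\smallsetminus\{0\}$ and $\mathbf{P}(w) = a \in \partial\mathbf{P}(U)$.

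The next step is to exploit that $h_k$ preserves $U$ and $v$ lies in the \emph{open} cone $U$. Fix a small closed ball $\overline{B} = \overline{B}(v,\varepsilon)$ with $\overline{B}\subset U$; then $h_k \cdot \overline{B} \subset U$ for all $k$. I would like to conclude that the Hausdorff limit of $h_k\cdot \overline{B}$ is contained in $\overline{U}$ and contains a neighbourhood of $w$ inside the affine slice through $w$ — but the issue is precisely that $h_k$ may expand in some directions, so $h_k\cdot\overline{B}$ need not stay bounded. The clean way around this is to normalize: replace $h_k$ by $t_k h_k$ with $t_k > 0$ chosen so that $\Vert t_k h_k \cdot v\Vert = 1$; since $\Vert h_k\cdot v\Vert$ is bounded away from $0$ and $\infty$ along our subsequence, the $t_k$ are bounded away from $0$ and $\infty$, so $t_k h_k\cdot\overline{B}$ still fails to be controlled. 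So instead I would argue directly in $\mathbf{P}(V)$: the sets $\mathbf{P}(h_k\cdot \overline{B})$ are convex subsets of the properly convex set $\mathbf{P}(U)$, each containing $\mathbf{P}(h_k\cdot v) \to a$. Passing to a further subsequence, $\mathbf{P}(h_k\cdot\overline{B})$ converges in the Hausdorff topology to a compact convex subset $F\subset\overline{\mathbf{P}(U)}$ with $a\in F$. Since $v$ is interior to $\overline{B}$ and $h_k$ is a projective automorphism, $\mathbf{P}(h_k\cdot v)$ lies in the \emph{relative interior} of the convex set $\mathbf{P}(h_k\cdot\overline B)$; in the limit this forces $a$ to lie in the relative interior of $F$. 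If $F$ were a single point we would be fine only if that point is interior — but $a\in\partial\mathbf{P}(U)$, so $F$ must be nondegenerate, i.e.\ $F$ is a convex set of positive dimension with $a$ in its relative interior. As $F\subset\overline{\mathbf{P}(U)}$ and $\mathbf{P}(U)$ is open, $F$ must actually be contained in $\overline{\mathbf{P}(U)}$ and meet $\partial\mathbf{P}(U)$ at the relative-interior point $a$; by convexity of $\mathbf{P}(U)$ this means a whole neighbourhood of $a$ in the affine span of $F$ lies in $\overline{\mathbf{P}(U)}$, and since that span has positive dimension and passes through the boundary point $a$, convexity of $\mathbf{P}(U)$ is contradicted (the boundary of a convex set cannot contain a relatively-open segment through a point together with points on both sides while the set stays on one side — more precisely, $a$ being in the relative interior of a positive-dimensional $F\subset\overline{\mathbf P(U)}$ with $\mathbf P(U)$ convex and open forces $a\in\mathbf P(U)$).

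\textbf{Main obstacle.} The crux — and the step I would be most careful with — is passing from ``$h_k\cdot v$ bounded'' to a genuine contradiction with $a\in\partial\mathbf P(U)$, since the naive approach of taking Hausdorff limits of $h_k\cdot\overline B$ in $V$ fails because $h_k$ can be unbounded as a sequence of linear maps even when $h_k\cdot v$ is bounded. The fix, as sketched above, is to do everything projectively: work with the convex sets $\mathbf P(h_k\cdot \overline B)\subset\mathbf P(U)$, note $\mathbf P(h_k\cdot v)$ sits in their relative interior, take a Hausdorff limit $F$, and observe $a$ lands in the relative interior of $F$, contradicting $a\in\partial\mathbf P(U)$ unless $F=\{a\}$ — but $F\supsetneq\{a\}$ because $\mathbf P(h_k\cdot v)$ is interior to the nondegenerate $\mathbf P(h_k\cdot\overline B)$ and these have ``definite size'' as seen through the $\operatorname{SL}(V)$-invariant Hilbert-type geometry of $\mathbf P(U)$ (the Hilbert diameter of $\mathbf P(h_k\cdot\overline B)$ in $\mathbf P(U)$ equals that of $\mathbf P(\overline B)$, which is positive). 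Alternatively, one can avoid Hilbert metrics and argue: if $\Vert h_k\cdot v\Vert\not\to\infty$ then along a subsequence $h_k\cdot v\to w\neq 0$ with $\mathbf P(w)=a$; pick $v'\in U$ linearly independent from $v$ with $v'$ close to $v$, so that by the same boundedness (applied to the compact set $\{v,v'\}$, or by a determinant/volume argument using $h_k\in\operatorname{SL}(V)$) at least a two-dimensional ``box'' around $v$ has bounded image, yielding in the limit a nondegenerate segment in $\overline U$ through $w$ with $w$ in its interior, contradicting $\mathbf P(w)=a\in\partial\mathbf P(U)$ and convexity. Either route works; I would present the projective/Hilbert-geometry version as it is the shortest and is the one implicit in \cite[Lem.\ 3.3]{DGKproj}.
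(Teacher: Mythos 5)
Your main (projective/Hilbert) route does not work, and the symptom is visible from the start: it uses neither of the two essential hypotheses, namely $h_k\in\operatorname{SL}(V)$ and the (contradiction) assumption that $\Vert h_k\cdot v\Vert$ stays bounded. If that route were valid it would show that $\mathbf{P}(h_k\cdot v)$ can never converge to a point of $\partial\mathbf{P}(U)$ at all, which is false (take $U$ the positive quadrant of $\mathbf{R}^2$, $h_k=\operatorname{diag}(k,1/k)$, $v=(1,1)$). Two steps fail concretely. First, relative interiority does not pass to Hausdorff limits: the sets $\mathbf{P}(h_k\cdot\overline{B})$ can collapse, and the invariance of the Hilbert diameter gives no lower bound on their Euclidean diameter, since Hilbert balls of fixed radius centered at points tending to $\partial\mathbf{P}(U)$ have Euclidean size tending to $0$ or degenerate into a boundary face; so $F=\{a\}$, or $F$ a segment with $a$ as an endpoint, is perfectly possible. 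Second, even if $F$ were positive-dimensional with $a$ in its relative interior, there is no contradiction: $U$ is only properly convex, not strictly convex, and $\partial\mathbf{P}(U)$ may contain segments (let $\mathbf{P}(U)$ be an open square and $a$ the midpoint of an edge); your parenthetical claim that such an $a$ is forced into $\mathbf{P}(U)$ is false.

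The alternative route is closer, but it has the same two gaps in different clothing. The propagation of boundedness from $h_k\cdot v$ to a nearby $v'$ or to a ``box'' around $v$ is asserted, not proved: the hypothesis bounds $h_k\cdot v$ only, and a determinant/volume argument yields lower bounds on images, never upper bounds. The correct fix uses positivity of the cone: pick a linear form $\varphi$ with $\varphi>0$ on $\overline{U}\smallsetminus\{0\}$, so $\varphi\geq\alpha\Vert\cdot\Vert$ on $\overline{U}$; since $v$ is interior, each $u$ in a fixed compact neighborhood of $v$ satisfies $Cv-u\in\overline{U}$, hence $\varphi(h_k\cdot u)\leq C\varphi(h_k\cdot v)$, and boundedness of $\Vert h_k\cdot v\Vert$ forces boundedness of the operator norms $\Vert h_k\Vert$. (Your dismissal of the case $h_k\cdot v\to0$ is also unjustified — $\mathbf{P}(h_k\cdot v)$ can converge while the vectors go to $0$ — but with this fix the case needs no separate treatment.) Moreover the contradiction you aim for at the end is not one: a nondegenerate segment of $\overline{U}$ having $w\in\partial U$ in its relative interior is compatible with convexity, again because of boundary faces. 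What does close the argument is full-dimensionality: once $\Vert h_k\Vert$ is bounded, extract $h_k\to h$ with $\det h=1$; the invertible limit $h$ maps the open cone $U$ into $\overline{U}$, hence into its interior $U$, so $a=\mathbf{P}(h\cdot v)\in\mathbf{P}(U)$, contradicting $a\in\partial\mathbf{P}(U)$. This two-step scheme — the compactness argument from $\det=1$, plus a chord estimate of the form $\varphi(h_k\cdot v)\geq\varepsilon\max_{\mathcal{K}}(\varphi\circ h_k)$ comparing $\varphi(h_k\cdot v)$ with $\Vert h_k\Vert$ — is exactly the paper's proof, and neither of your routes, as written, reaches it.
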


\begin{proof}
We still denote by $\Vert .\Vert $ the operator norm associated to $\Vert .\Vert $. Let us first show that $\Vert h_k\Vert  \rightarrow + \infty$. Assume by contradiction that $(h_k)$ admits a subsequence with bounded norm. We still denote this subsequence by $(h_k)$. Then $(h_k)$ converges in $\operatorname{End}(V)$ to some $h$. The limit $h$ is in $\operatorname{SL}(V)$ and preserves $U$. Denoting by $\mathbf{P}(g)$ the projection of an element $g \in \operatorname{End}(V)$ in $\mathbf{P}(\operatorname{End}(V))$, one has $\mathbf{P}(h_k) \rightarrow \mathbf{P}(h)$ in $\PGL(V)$. Since $\mathbf{P}(h_k \cdot v) \rightarrow \mathbf{P}(h \cdot v)$, one has $a = \mathbf{P}(h \cdot v)\in \mathbf{P}(U)$, contradiction. Hence $\Vert h_k\Vert  \rightarrow + \infty$. \\
\indent  Let $\varphi$ be a linear form on $V$ such that $\overline{U} \subset \{ \varphi > 0\}$. We may assume that $\varphi(v) = 1$. The set $U \cap \{ \varphi = 1\}$ is bounded; let $\mathcal{K}$ be its boundary. Since $\mathcal{K}$ is compact, there exists some $0 < \varepsilon < 1$ such that for all $w \in \mathcal{K}$, the line through $v$ and $w$ intersects $\mathcal{K}$ in a $w' \ne w$ such that $v = t w+ (1-t)w'$ for some $t \geq \varepsilon$. Then for all $k \in \mathbf{N}$ one has 
\begin{equation*}
    \varphi(h_k \cdot v) \geq \varepsilon \max_{\mathcal{K}} (\varphi \circ h_k).
\end{equation*}
Thus it is sufficient to see that the maximum of $\varphi \circ h_k$ over $\mathcal{K}$ tends to infinity with $k$. Since $U \cap \{ \varphi =1\}$ is the convex hull of $\mathcal{K}$, it suffices to show that the maximum of $\varphi \circ h_k$ over $U \cap \{ \varphi =1\}$ tends to infinity with $k$. Now since $U$ is a cone, it suffices to show that the supremum of $\varphi \circ h_k$ over $U \cap \{ \varphi < 1\}$ tends to infinity with $k$. Since $\overline{U} \subset \{ \varphi >0 \}$, there exists some $\alpha  >0$ such that $\varphi(u) > \alpha ||u||$ for all $u \in \overline{U}$. By openness, there exists some $\beta >0$, such that for all $k \in \N$ there exists $u_k \in U \cap \{ \varphi < 1 \}$ such that $||h_k \cdot u_k|| > \frac{1}{2}|| h_k || $. Then one has:

\begin{equation*}
    \max_{U \cap \{ \varphi < 1\}} (\varphi \circ h_k) \geq \varphi (h_k \cdot u_k) > \alpha ||h_k \cdot u_k||\geq \alpha \beta|| h_k ||  \rightarrow + \infty.\eqno\qedhere
\end{equation*}

\end{proof}

Let $V_\pp := \operatorname{Span}(E^+(\O)) \subset \mathbf{R}^{p+1,q+1}$ and $V_\mm := \operatorname{Span}(E^-(\O)) \subset \mathbf{R}^{p+1,q+1}$. By Lemma~\ref{exist_points_extremaux_pq}, one has $\mathbf{R}^{p+1,q+1} = V_\pp + V_\mm$. Moreover, by Lemma~\ref{prop_des_pts_extremaux_espace_et_temps}.3), the two spaces $V_\pp$ and $V_\mm$ are orthogonal. It follows easily that $V_\pp \cap V_\mm = \{0\}$, so $\mathbf{R}^{p+1,q+1} = V_\pp \oplus V_\mm$. For the same reason, the spaces $V_\pp, V_\mm$ are nondegenerate, meaning that the restriction of $\b$ to $V_{i}$ for $i \in \{ \pp, \mm\}$ has no kernel and is of signature $(p_i, q_i)$, with $p_\pp + p_\mm = p+1$ and $q_\pp + q_\mm = q+1$. Moreover, by Lemma~\ref{prop_des_pts_extremaux_espace_et_temps}, $1)$, the subspace $V_i$ is $\Conf(\O)$-invariant. Therefore we get a $\Conf(\O)$-invariant orthogonal decomposition  $$\mathbf{R}^{p+1,q+1} = V_\pp \oplus V_\mm.$$

Let us recall the notation of Section~\ref{sect_higher_diamonds} and of the proof of Proposition~\ref{ouverts_homogènes_Einpq}; we write 
$$J = \mathbf{P}\left\{v_\pp + v_\mm \in V_\pp \oplus V_\mm \, \vert \, \b(v_{i}, v_i) = 0 \text{ for } i \in \{\pp, \mm\}\right\}$$
and 
$$U_{i} = \mathbf{P}\left\{v_\pp + v_\mm \in V_\pp \oplus V_\mm \, \vert \, -\b( v_\pp, v_\pp) = \b(v_\mm, v_\mm) = (-1)^i \right\},$$ 
for $i\in\{0,1\}$.

Let $\O_\pp := \O \cap U_\pp$, $\O_\mm := \O \cap U_\mm $, and $\O_J := \O \cap J$. These sets are $\Conf(\O)$-invariant. One has either $\O_\pp \ne \emptyset$ or $\O_\mm \ne \emptyset$, because $\O_J$ has empty interior in $\Ein^{p,q}$. Let us assume for example that $\O_\pp \ne \emptyset$. Let $a \in \partial \O_\pp \subset J \cup \partial \O$. If $a \in J$, then $a \in \partial U_0$. If $a \in \partial \O$, then by almost-homogeneity there exist $(g_k) \in \Conf(\O)^{\mathbf{N}}$ and $x \in \O$ such that $g_k \cdot x \rightarrow a$ (see Remark~\ref{prop_asymp_dist_mark}). \\
    \indent Since $x \in \O \subset \mathcal{O}_{\O}$, there exists $(v_\pp, v_\mm) \in V_\pp \times V_\mm$ such that $v_\pp + v_\mm\in \widetilde{\mathcal{O}}_{\O}$ and $x = \mathbf{P}(v_\pp + v_\mm)$. For all $k \in \mathbf{N}$, let $\widetilde{g}_k$ be the unique lift of $g_k$ defined in Subsection~\ref{sect_proj_lem}. Since $\widetilde{g}_k$ preserves $\widetilde{\mathcal{O}}_{\O}$, by Lemma~\ref{limit_set_inclus_dans_H}, one has
    \begin{equation*}
        \Vert  \widetilde{g}_k (v_\pp + v_\mm) \Vert  \rightarrow + \infty,
    \end{equation*}
for some norm $\Vert .\Vert $ on $V$. \\
\indent On the other hand, up to extracting, there exist $w_\pp \in V_\pp$ and  $w_\mm \in V_\mm$ such that $\Vert w_\pp +w_\mm\Vert  = 1$ and $\frac{\widetilde{g}_k ( v_\pp + v_\mm)}{\Vert \widetilde{g}_k ( v_\pp + v_\mm )\Vert } \rightarrow w_\pp + w_\mm$. This implies $a = \mathbf{P}(w_\pp + w_\mm )$. But
\begin{equation*}
    \b(w_\pp, w_\pp) = \lim_{n \rightarrow + \infty} \b\left(\frac{\widetilde{g}_k v_\pp}{\Vert  \widetilde{g}_k( v_\pp + v_\mm)\Vert }, \frac{\widetilde{g}_k  v_\pp }{\Vert  \widetilde{g}_k ( v_\pp + v_\mm)\Vert }\right) = 0, 
\end{equation*}
and the same computation holds for $w_\mm$, meaning that $a \in J \subset \partial U_0$. \\
\indent We have proved that $\partial \O_0 \subset U_\pp$. Hence $\O_\pp$ is closed in $U_\pp$. Since it is also open, it is a union of connected components of $U_\pp$. But as soon as $q_\pp \geq 2$ or $p_\mm \geq 2$, by Proposition~\ref{ouverts_homogènes_Einpq}, the open set $U_\pp$ has no proper connected components. Since $\O_\pp \subset \O$ is proper, this implies that $(p_\pp, q_\pp) =(p,1)$ and $(p_\mm,q_\mm) = (1,q)$. Hence again by Proposition~\ref{ouverts_homogènes_Einpq}, the set $\O_\pp$ is a diamond. \\
\indent If $\O_\mm \ne \emptyset$, then by Proposition~\ref{ouverts_homogènes_Einpq}, it has to be the diamond dual to $\O_\pp$. But then $\O_\pp \cup \O_\mm \subset \O$ is not proper. Hence necessarily $\O_\mm = \emptyset$. By openness of $\O$, one thus have $\O_J = \emptyset$, hence $\O = \O_\pp$ is a diamond. This concludes the proof of Theorem~\ref{mainth}.

\section{Conformally flat manifolds with proper development}\label{sect_proof_mainth2}

This section is devoted to the proof of Theorem~\ref{maincor}.

\begin{proof}[Proof of Theorem~\ref{maincor}]
    Let $\dev:\widetilde{M}\to\Ein^{p,q}$ be a developing map for $M$ and let $\O=\dev(\widetilde{M})$. Since $M$ is closed, there is a compact fundamental domain $\mathcal{K}\subset\widetilde{M}$ for the action of $\pi_1(M)$. Since the developing map is equivariant, the compact set $\dev(\mathcal{K})\subset\O$ intersects any $\hol(\pi_1(M))$-orbit, that is $\O$ is almost-homogeneous. Theorem~\ref{mainth} implies that $\O$ is a diamond and is conformally equivalent to $-\H^p\times\H^q$. Let $g_\O$ be the conformally invariant Riemannian metric of $\O$ equal to $g_{\H^p}\oplus g_{\H^q}$ under this identification and let $g=\dev^*g_\O$. The metric $g$ is invariant under $\pi_1(M)$, so it defines a Riemannian metric of $M$. This metric must be complete since $M$ is closed, so $g$ is also complete. The map $\dev$ is a local isometry between complete Riemannian manifolds, so it is a covering map. Since $\O$ is simply connected, the covering map is a diffeomorphism onto its image and $M$ is a compact quotient of $\H^p\times\H^q$. 
    
When $1\leq p<q$ are distinct and $(p,q)\neq (2,3)$, the group $\PO(p,1)\times\PO(1,q)$ is non-isotypic and it is a general fact that it's cocompact lattices are virtually products (see e.g.\ \cite[Thm.\ 5.6.2]{morris2015} for a proof using Margulis Arithmeticity when $p, q \geq 2$). This fact completes the proof of the theorem. $\qedhere$
\end{proof}

 When $p=q$, the argument in the first part of the proof of Theorem~\ref{maincor} is valid, so any closed conformally flat $(p,p)$-manifold with proper development is conformally equivalent to a quotient $(-\H^p)\times\H^p/\Gamma$, where $\Gamma<\PO(p,1)\times\PO(1,p)$ is a cocompact lattice. However, the group $\PO(p,1)\times\PO(1,p)$ is isotypic, hence it admits irreducible cocompact lattices (see \cite[Cor.\ 18.7.4]{morris2015}). 
 The case $(p,q)=(2,3)$ is also special, as $\PO(1,2) \times \PO(1,3)$ admits cocompact lattices which are not virtually products of cocompact lattices of $\PO(1,2)$ and $\PO(1,3)$, since $\mathfrak{so}(3,\C)$ is an irreducible factor of both $\mathfrak{so}(1,2)_{\C} = \mathfrak{so}(3,\C)$ and $\mathfrak{so}(1,3)_\C\simeq \mathfrak{so}(4,\C)\simeq \mathfrak{so}(3,\C)\oplus \mathfrak{so}(3,\C)$.

\section{Exceptional isomorphisms in low dimensions}\label{sect_except_iso}

\subsection{The Grassmannian $\operatorname{Gr}_2(\mathbf{R}^{4})$ }\label{sect_gr2} In \cite{van2019rigidity}, Limbeek--Zimmer prove that any proper divisible domain of the Grassmannian $\operatorname{Gr}_p(\mathbf{R}^{2p})$ of $p$-planes of $\mathbf{R}^{2p}$ which is \emph{convex in some affine chart} is isomorphic to a model of the symmetric space of $\PO(p,p)$.  
The exceptional isomorphism $\SLq^0 \simeq \PO(3,3)^0$ allows us to strengthen this rigidity result in the case of $\Gr$, not making any convexity assumption and only asking for almost-homogeneity.

The group $\SLq$ acts naturally on the flag manifold $\Gr$, and the stabilizer of a point is a parabolic subgroup of $\SLq$. An \emph{affine chart} of $\Gr$ is a subset of the form 
\begin{equation*}
   \mathbf{A}_W = \{V \in \Gr \mid V \cap W = \{ 0 \} \},
\end{equation*}
where $W \in \Gr$ (see e.g.\ \cite{van2019rigidity}). A domain $\O \subset \Gr$ is \emph{proper} if its closure is contained in an affine chart. Its \emph{automorphism group} is the set of all elements $g \in \SLq$ such that $g \cdot \O = \O$. We say that two domains $\O_1, \O_2 \subset \Gr$ are \emph{isomorphic} if there exists $g \in \SLq$ such that $\O_2 = g \cdot \O_1$. 
\begin{ex}\label{ex_Boule}
    The symmetric space of $\PO(2,2)$ embeds as a proper symmetric domain of $\Gr$. To be more precise, let $\b$ be a nondegenerate symmetric bilinear form of signature $(2,2)$ on $\R^4$ and let $\mathbf{B}_{2,2}(\b)$ be the subset of $\Gr$ consisting of all the $2$-planes $V$ such that $\b_{|V \times V}$ is positive definite. The domain $\mathbf{B}_{2,2}(\b)$ is proper, symmetric and divisible (see \cite{van2019rigidity}), and its automorphism group is equal to $\PO(\b) \simeq \PO(2,2)$, and the stabilizer of a point is a maximal compact subgroup of $\PO(\b)$. Hence $\mathbf{B}_{2,2}(\b)$ is a model for the symmetric space $\mathbf{H}^2 \times \mathbf{H}^2$ of $\PO(2,2)$. By the Sylvester's law of inertia, all the domains $\mathbf{B}_{2,2}(\b)$, with $\b$ of signature $(2,2)$, are isomorphic.
\end{ex}

Let $\tau: \SLq \rightarrow \PGL(\PRSP)$ be the group embedding defined by the action of $\SLq$ on the projectivization of alternate product $\PPRSP$. By factorization, we get the classical \emph{Plücker embedding}
        \begin{equation}\label{eq_pluck}
            \iota: \begin{cases}
                \Gr &\longrightarrow \PPRSP \\
                \operatorname{Span}(u,v) & \longmapsto  \mathbf{P}(u \wedge v).
            \end{cases}
        \end{equation}
The bilinear form $\omega$ defined on $\PRSP$  by 
\begin{equation*}
    \omega(x,y) = x \wedge y \quad \forall x,y \in \PRSP,
\end{equation*}
is nondegenerate, symmetric, of signature $(3,3)$. Since $\tau(\SLq)$ preserves this bilinear form, one has $\tau(\SLq) \subset \PO(\omega)\simeq\PO(3,3)$, with an equality between the identity components of these two groups. Then the image of the Plücker embedding~\eqref{eq_pluck} is equal to 
\begin{equation*}
    \Ein(\PRSP, \omega) := \{ \mathbf{P}(x) \in \PPRSP \mid \omega(x,x) = 0\} \simeq \Ein^{2,2}.
\end{equation*}
Thus there is a $\tau$-equivariant diffeomorphism $\Gr \simeq \Ein^{2,2}$.

\subsection{Triality and $\PO(4,4)$}\label{sect_triality} Another exceptional isomorphism arising in low dimension appears for $\Ein^{3,3}$. The set of maximal totally isotropic subspaces of $\R^{4,4}$ has two connected components, denoted by $\mathcal{F}_{1}$ and $\mathcal{F}_2$. They are both flag manifolds, corresponding to two extremal roots of the Dynkin diagram of $\PO(4,4)$. 
 The root system of $\PO(4,4)$ is $D_4$. It is a tripod and has automorphism the symmetric group $\mathfrak{S}_3$. The extremal roots correspond to the flag manifolds $\Ein^{3,3}$, $\mathcal{F}_1$ and $\mathcal{F}_2$, and are permuted by the automorphism group of $D_4$. In particular, there exists an automorphism $\sigma$ of order $3$, sending the root corresponding to $\Ein^{3,3}$ to the one corresponding to $\mathcal{F}_1$. The automorphism $\sigma$ induces an outer automorphism $\varphi$ of order $3$ of $\PO(4,4)$, called \emph{triality}. Then $\varphi$ induces a $\varphi$-equivariant diffeomorphism $\Ein^{3,3} \simeq \mathcal{F}_1$. The notion of transversality, and hence of properness (defined in a general setting in \cite{Zimpropqh}), is preserved by this diffeomorphism, as all the flag manifolds of $\PO(4,4)$ are symmetric (the opposition involution of $D_4$ is trivial). The same construction holds for $\mathcal{F}_2$, considering $\varphi^2$ instead of $\varphi$.

 \subsection{Proof of Corollary \ref{cor_rigidity}}

$(1)$  Let $ \O  \subset \Gr$ be a proper almost-homogeneous domain. By properness, there exists $y \in \Gr$ such that $y \cap x = \{0\}$ for all $x \in \overline{\O}$. But this is equivalent to saying that
$\iota(x) \notin \iota(y)^{\perp_{\omega}}$ for all $x\in\overline{\O}$. Hence $C(\iota(y)) \cap \overline{\iota(\O)} = \emptyset$, so $\iota(\O)$ is a proper domain of $\Ein^{2,2}$. Moreover, by $\tau$-equivariance of $\iota$, the domain $\iota(\O)$ is $\PO(3,3)$-almost-homogeneous. Then by Theorem~\ref{mainth}, it is a diamond. By the equality $\tau(\SLq^0) =  \PO(3,3)^0$, we thus know that all proper almost-homogeneous domains of $\Gr$ are isomorphic. Since the domain $\mathbf{B}_{2,2}(\b_0)$ (for some fixed $(2,2)$-bilinear form $\b_0$) is part of them, they are all isomorphic to it. Point $(1)$ then follows by Example~\ref{ex_Boule}.

$(2)$ This is a straightforward consequence of the $\varphi$-equivariance (resp.\ $\varphi^2$-equivariance) of the diffeomorphism $\Ein^{3,3} \simeq \mathcal{F}_1$ (resp.\ $\Ein^{3,3} \simeq \mathcal{F}_2$) preserving the notions of transversality and properness. $\qedhere$

\bibliographystyle{alpha}
\bibliography{bibliography}

\end{document}